\documentclass[12pt]{article}
\usepackage{amsmath,amssymb,latexsym}
\usepackage{amscd,bm}
\usepackage{color}
\usepackage{amsfonts}
\usepackage{wasysym}
\usepackage{mathrsfs,calligra}
\usepackage{hyperref}

\pagestyle{plain} \oddsidemargin0cm \topmargin-.6cm
\textheight21.8cm \textwidth16cm
\parindent0.5cm

\newtheorem{fact}{Fact}[section]
\newtheorem{theorem}[fact]{Theorem}
\newtheorem{corollary}[fact]{Corollary}

\newtheorem{lemma}[fact]{Lemma}

\newenvironment{proof}[1][Proof]{\textbf{#1.} }{\ \rule{0.5em}{0.5em}}

\numberwithin{equation}{section}
\newcommand{\eps}{{\varepsilon}}
\newcommand{\diag}{\operatorname{diag}}
\newcommand{\tr}{\operatorname{trace}}

\newcommand{\id}{\operatorname{id}}
\newcommand{\sdet}{\operatorname{sdet}}

\newcommand{\0}{{\mathbf{0}}}
\newcommand{\lhs}{{\rm lhs}}

\newcommand{\sk}[1]{\left\langle #1 \right\rangle}
\newcommand{\hor}{{\mathrm{hor}}}
\newcommand{\cart}{{\mathrm{cart}}}
\newcommand{\susy}{{\mathrm{susy}}}
\newcommand{\even}{{\mathrm{even}}}
\newcommand{\odd}{{\mathrm{odd}}}
\newcommand{\body}{{\mathrm{body}}}
\newcommand{\soul}{{\mathrm{soul}}}
\newcommand{\dach}{\mathsf{\Lambda}}

\newcommand{\W}{\mathcal{W}}
\newcommand{\M}{\mathcal{M}}
\newcommand{\I}{\mathcal{I}}
\newcommand{\B}{\mathcal{B}}
\newcommand{\U}{\mathcal{U}}
\newcommand{\V}{\mathcal{V}}
\newcommand{\E}{\mathbb{E}}

\newcommand{\cL}{\mathcal{L}}
\newcommand{\cR}{\mathcal{R}}
\newcommand{\s}{\mathscr{S}}
\newcommand{\sstar}{{\pmb{\mathscr{S}}}^*}

\newcommand{\matr}[1]{\left(\begin{array}#1\end{array}\right)}
\newcommand{\R}{{\mathbb R}}  
\newcommand{\N}{{\mathbb N}}

\newcommand{\C}{{\mathbb C}}  
\newcommand{\F}{{\mathcal{F}}}
\newcommand{\T}{{\mathcal{T}}}
\newcommand{\G}{{\mathcal{G}}}  
\newcommand{\bG}{{\boldsymbol{\mathcal{G}}}}  
\newcommand{\A}{{\mathcal{A}}}

\begin{document}
\thispagestyle{empty}

\begin{center}
{\LARGE Martingales and some generalizations 
arising from the supersymmetric
hyperbolic sigma model}\\[3mm]

{\large Margherita Disertori\footnote{Institute for Applied Mathematics
\& Hausdorff Center for Mathematics, 
University of Bonn, \\
Endenicher Allee 60,
D-53115 Bonn, Germany.
E-mail: disertori@iam.uni-bonn.de}
\hspace{1cm} 
Franz Merkl \footnote{Mathematisches Institut, Ludwig-Maximilians-Universit{\"a}t  M{\"u}nchen,
Theresienstr.\ 39,
D-80333 Munich,
Germany.
E-mail: merkl@math.lmu.de
}
\hspace{1cm} 
Silke W.W.\ Rolles\footnote{
Zentrum Mathematik, Bereich M5,
Technische Universit{\"{a}}t M{\"{u}}nchen,
Boltzmannstr.\ 3,
D-85748 Garching bei M{\"{u}}nchen,
Germany.
E-mail: srolles@ma.tum.de}
\\[3mm]
{\small \today}}\\[3mm]
\end{center}

\begin{abstract}
We introduce a family of real random variables $(\beta,\theta)$
arising from the supersymmetric nonlinear sigma model and
containing the family $\beta$ introduced by Sabot, Tarr\`es, and Zeng
\cite{sabot-tarres-zeng15} in the context of the vertex-reinforced jump process. 
Using this family we construct an exponential martingale
generalizing the one considered in \cite{disertori-merkl-rolles2017}. 
Moreover, using the full supersymmetric nonlinear sigma model 
we also construct a generalization of the  exponential martingale
involving Grassmann variables.\\[3mm]
2010 MSC: 60G60 (primary), 60G42, 82B44 (secondary).\\
Keywords: nonlinear hyperbolic supersymmetric sigma model, martingale, coupling.
\end{abstract}

\section{Introduction and main results}
\label{sect:intro}

The nonlinear supersymmetric hyperbolic sigma ($H^{2|2}$) model
was introduced by Zirnbauer in \cite{zirnbauer-91} as a toy model
for quantum diffusion. The corresponding measure can be better analyzed after 
passing to horospherical coordinates $(u,s)$ as in
\cite{spencer-zirnbauer2004} (for the nonsupersymmetric version)  and
$(u,s,\overline\psi,\psi)$ as in \cite{disertori-spencer-zirnbauer2010}
(cf. details below).
In particular a phase transition in dimension $d\geq 3$ was proved, see    
\cite{disertori-spencer-zirnbauer2010} and \cite{disertori-spencer2010}.

The $H^{2|2}$ model has an interpretation
as a random Schr\"odinger operator \cite{disertori-spencer2010}
and unexpectedly also as mixing measure for the vertex-reinforced jump process \cite{sabot-tarres2012}.
This process was conceived by Werner and first developed by  Davis and Volkov \cite{davis-volkov1}
\cite{davis-volkov2}.

More recently  Sabot, Tarr\`es, and Zeng developed further
the random Schr\"odinger operator interpretation
\cite{sabot-tarres-zeng15} \cite{sabot-zeng15}.
In particular they derived the explicit law for the random
potential, and constructed two families of martingales in discrete time.
One of them is the key ingredient to derive a characterization
of recurrence/transience behavior of the vertex-reinforced jump
process. In \cite{sabot-zeng17} Sabot and Zeng connected these families
to  certain continuous time martingales.

The above two families of discrete time martingales are only the first
instances of an infinite hierarchy of martingales described in
\cite{disertori-merkl-rolles2017}. All these martingales involve
only the $u$ components of the $H^{2|2}$ model.
In this paper we extend these martingales to even larger families
involving all the variables $(u,s,\overline\psi,\psi).$

\paragraph{How this article is organized.}
In  Sections \ref{sect:intro} and \ref{sect:2} we consider only the 
marginal $\mu^W(du\, ds)$ of the full $H^{2|2}$ model
obtained by integrating out the Grassmann variables $(\overline\psi,\psi).$
It is introduced in Section \ref{subsec:susy-model}.
The random variables $u$ encode the asymptotics of local
 times for a time changed vertex reinforced jump process while
 the random variables $s$ describe the corresponding fluctuations.
For details see \cite{merkl-rolles-tarres2017}.

In Section \ref{subsect:scaling}   we introduce a scaling transformation $\s$
for the variables $(u,s).$ The effect of this scaling on 
the measure $\mu^W$ is formulated in Theorem~\ref{thm:scaling}.  
This theorem is in turn the key ingredient to prove the martingale property,
which extends Theorem 2.6 and Corollary 2.7 from   
\cite{disertori-merkl-rolles2017} to test functions depending on $(u,s)$ variables.
Note that when the test function depends only on the $u$ variable, we recover the
martingales derived in \cite{disertori-merkl-rolles2017}.
The martingale property on an infinite graph for the marginal $\mu^W$
is stated in Section \ref{subsect:infinite}, while  Section \ref{subsect:finite}
contains some preliminary results in finite volume.
All these results are proved in Section \ref{sect:2}.

In Section \ref{sect:3}  we extend the results of Sections \ref{sect:intro}
and \ref{sect:2} to the full $H^{2|2}$
super-measure studied
in \cite{disertori-spencer-zirnbauer2010}, where Grassmann variables are included.
This requires also a generalization of the notion of martingale to a 'susy martingale', not to be confused
with the notion of supermartingale in standard probability. Here the test functions
may depend on Grassmann variables too. In particular when the test function depends only on the real variables
$u,s$ but not on the Grassmann variables, we recover the martingales described in 
Theorem~\ref{thm:generating-mg} and Corollary~\ref{eq:cor-derivatives-are-mgs}.

\subsection{The nonlinear supersymmetric hyperbolic sigma  model}
\label{subsec:susy-model}

Let $\tilde G=(\tilde V,\tilde E)$ be a finite connected graph with vertex set
$\tilde V$ and set of undirected edges $\tilde E$. We assume that $\tilde G$ 
has no direct loops and no parallel edges. 
We write $i\sim j$ if there is an edge between $i$ and $j$. 
Let $\delta\in\tilde V$
be a distinguished vertex and set $V=\tilde V\setminus\{\delta\}$. Every 
edge $(i\sim j)\in\tilde E$ gets a weight $W_{ij}=W_{ji}>0$. For convenience of 
notation, we set $W_{ij}=0$ for all $i,j\in\tilde V$ with $i\not\sim j$. 
The euclidean scalar product is denoted by 
$\langle a,b\rangle=\sum_{i\in I}a_ib_i$, where $I=V$ or $I=\tilde V$, depending 
on the type of $a$ and $b$. Let 
\begin{align}
\label{eq:def-Omega}
\Omega_V:= & 
\left\{ (u=(u_i)_{i\in \tilde V},s=(s_i)_{i\in \tilde V})\in\R^{\tilde V}\times\R^{\tilde V}:\; 
u_\delta=0,s_\delta=0\right\}.
\end{align}
We define the matrix $A^W(u)\in\R^{\tilde V\times\tilde V}$ by 
\begin{align}
\label{eq:def-A}
A_{ij}^W(u)=\left\{\begin{array}{ll}
-W_{ij}e^{u_i+u_j} & \text{for }i\neq j, \\
\sum_{k\in \tilde V} W_{ik}e^{u_i+u_k}  & \text{for }i=j.
\end{array}\right. 
\end{align}
Let $A^W_{VV}(u)$ denote its restriction to $V\times V$. We define 
$\rho^W:\Omega_V\to[0,\infty)$ by 
\begin{align}
\label{eq:def-rho}
\rho^W(u,s)= & e^{-\frac12 \sk{s,A^{W}(u)s}}\det A^W_{VV}(u) 
\prod_{(i\sim j)\in \tilde E} e^{-W_{ij}[\cosh(u_i-u_j)-1]} \\
= & \det A^W_{VV}(u) 
\prod_{(i\sim j)\in \tilde E} e^{-W_{ij}[\cosh(u_i-u_j)-1+\frac12(s_i-s_j)^2e^{u_i+u_j}]} .
\end{align}
Using the reference measure
\begin{align}
\label{eq:def-zeta}
\zeta(du_ids_i)=e^{-u_i}\, du_ids_i
\end{align}
on $\R^2$, the supersymmetric sigma model is described by the 
following probability measure on $\Omega_V$:
\begin{align}
\mu^W(du\, ds) =& \rho^W(u,s) \prod_{i\in V}\frac{1}{2\pi}\, \zeta(du_ids_i),
\label{eq:def-mu-W}
\end{align}
where we drop the Dirac measure located at $(u_\delta,s_\delta)=(0,0)$ 
in the notation. 
We denote the expectation with respect to $\mu^W$ by $\E_{\mu^W}$. 

\paragraph{Notation.}
In the following, operations are frequently
to be read componentwise, like $a^2+b^2=(a_i^2+b_i^2)_{i\in\tilde V}$, 
$e^{-u}b/a=(e^{-u_i}b_i/a_i)_{i\in\tilde V}$, 
$\log a=(\log a_i)_{i\in\tilde V}$. 

\subsection{Scaling}
\label{subsect:scaling}

In this section, we introduce the scaling transformation $\s.$
It arises naturally as action of a group $\G_{V}.$
The reader may skip this subsection except from the definitions
given in formulae \eqref{eq:def-scaling-s}  and
\eqref{eq:def-rescaled-weights}.

\paragraph{The group $\G$.}
For $(a,b)\in\R^+\times\R$, we abbreviate
\begin{align}
\label{eq:def-matrix-ab}
[a,b]:=
\left(\begin{array}{cc}
a & b\\
0 & 1 \end{array}\right). 
\end{align}
The set of matrices 
\begin{align}
\label{eq:def-G}
\G:=\{[a,b]:a>0,b\in\R\}
\end{align}
endowed with matrix multiplication forms a non-Abelian group. 
Its group operation can be written in the following form:
\begin{align}
\label{eq:group-op}
[a'',b'']=[a,b]\cdot[a',b']=[aa',b+ab']. 
\end{align}
The group $\G$ has the neutral element $[1,0]$; the inverse is given by 
\begin{align}
\label{eq:inverse}
[a,b]^{-1}=[1/a,-b/a]. 
\end{align}
We endow $\G$ with the Lebesgue measure in the 
$(a,b)$-coordinates $\lambda(da\, db)=da\, db$. 
We introduce coordinates $(u,s)\in\R^2$ of $\G$ by 
\begin{align}
a=e^{-u}\quad \text{ and }\quad b=s.
\end{align}
In these coordinates the Lebesgue measure $da\, db$ takes the form 
of the measure $\zeta$ from formula \eqref{eq:def-zeta}:
\begin{align}
da\, db=\zeta(du\, ds).
\end{align}

\paragraph{Right operation on $\G$.}
Note that this measure $\lambda$ is \emph{not} a Haar measure on $\G$. 
We define the right
operations 
\begin{align}
\cR_{v'}:\G\to\G,v\mapsto v''=v\cdot v'&\quad \text{ for } v'\in\G.
\end{align}
Under $\cR_{v'}$, using the notation $v''=[a'',b'']=[e^{-u''},s'']$, the measure $\lambda$ scales as follows:
\begin{align}
\cR_{[a',b']}[\lambda](da''\, db'') = & \frac{1}{a'}\, da''\, db''
=\frac{1}{a'}\,\zeta(du''\, ds'').
\label{eq:transf-right-reference-measure}
\end{align}

\paragraph{Cartesian power of $\G$.}
We define the cartesian power of the group $\G$ with one component 
pinned to the neutral element: 
\begin{align}
\label{eq:def-G-V}
\G_V:= \{ [a,b]:=([a_i,b_i])_{i\in \tilde V}\in\G^{\tilde V} :\; 
[a_\delta,b_\delta]=[1,0]\}.
\end{align}
In particular, the group operation $\cdot:\G_V\times\G_V\to\G_V$ is 
understood componentwise.
The set $\G_V$ can be identified with the set $\Omega_V$, defined in 
\eqref{eq:def-Omega}, via the componentwise
coordinate change to $(u,s)$-coordinates
\begin{align}
\label{eq:def-epsilon}
\iota:\G_V\to\Omega_V,\quad 
[a,b]\mapsto (-\log a,b). 
\end{align}

\paragraph{$\s$-operation as right operation.}
Using the identification $\iota$, the $\s$-operation is defined 
as right operation with inverse elements $[a,b]\in\G_V$:
\begin{align}
\label{eq:S-in-terms-of-right-operation}
\s_{[a,b]}:\Omega_V\to\Omega_V,\quad\s_{[a,b]}=& \iota\circ\cR_{[a,b]^{-1}}\circ\iota^{-1}
=\iota\circ\cR_{[1/a,-b/a]}\circ\iota^{-1},
\end{align}
i.e. for $(u,s)\in\Omega_V$, using the specifications \eqref{eq:group-op} and \eqref{eq:inverse} of the group operations,
\begin{align}
\s_{[a,b]}(u,s)=\iota\left(\iota^{-1}(u,s)\cdot [1/a,-b/a]\right)
=&\left(u_i+\log a_i,s_i- e^{-u_i}\frac{b_i}{a_i}\right)_{i\in \tilde V}. 
\label{eq:def-scaling-s}
\end{align}
Note that $[a_\delta,b_\delta]=[1,0]$ implies $\s_{[a,b]}(u,s)\in\Omega_V$. 
The map $\s:\G_V\times\Omega_V \to \Omega_V$,  $\s([a,b],(u,s))=\s_{[a,b]}(u,s)$, 
is a group action. Indeed, for $v_1,v_2,v\in\G_V$ it holds
\begin{align}
\label{eq:s-group-action}
\s_{v_1}(\s_{v_2}(\iota(v)))
= \iota((v\cdot v_2^{-1}) \cdot v_1^{-1})
= \iota(v\cdot (v_1\cdot v_2)^{-1})
=\s_{v_1\cdot v_2}(\iota(v)).
\end{align}
Moreover,
for the neutral element $[1,0]\in\G_V$ the map $\s_{[1,0]}$ is the identity. 
Consequently,
$\s_{[a,b]}$ is invertible for $[a,b]\in\G_V$ with the following inverse: 
\begin{align}
\label{eq:def-S-r-gamma-inv}
\s_{[a,b]}^{-1}=\s_{[a,b]^{-1}}:\;
\Omega_V&\to\Omega_V, \nonumber\\ 
(u,s)&\mapsto (\tilde u,\tilde s)= 
\iota(\iota^{-1}(u,s)\cdot[a,b])
=(u-\log a,s+e^{-u}b).
\end{align}
Let 
\begin{align}
\W=\{W=(W_{ij})_{i,j\in\tilde V}\in[0,\infty)^{\tilde V\times\tilde V}: 
W_{ij}=W_{ji}, W_{ij}>0 \text{ if and only if }i\sim j\}
\end{align} 
denote the set of 
possible edge weights. We consider the following group action on $\W$:
\begin{align}
\label{eq:def-rescaled-weights}
\G_V\times\W\to\W, ([a,b],W)\mapsto 
W^a=(W^a_{ij}:=a_ia_jW_{ij})_{i,j\in\tilde V}.
\end{align}

\subsection{Results in finite volume}
\label{subsect:finite}
Denote by $x_V$ the restriction of a vector $x\in\R^{\tilde V}$ to $\R^V$. 
Let 
\begin{align}
\label{eq:def-E}
e^{-u}_{VV}=\diag(e^{-u_i},i\in V)
\end{align}
denote the diagonal matrix in $\R^{V\times V}$ with entries $e^{-u_i}$ 
on the diagonal. 
We consider the variables 
$\theta^{V,W}(u,s)=(\theta^{V,W}_i(u,s))_{i\in V}$ defined by 
\begin{align}
\label{eq:def-theta} 
\theta^{V,W}(u,s)=e^{-u}_{VV}A^W_{VV}(u)s_V. 
\end{align}
Componentwise, we have for $i\in V$
\begin{align}
\label{eq:theta-componentwise}
\theta_i^{V,W}(u,s)
=\sum_{j\in\tilde V}W_{ij}e^{u_j}(s_i-s_j).
\end{align}
We need the random variables 
$\tilde\beta^{\tilde V,W}=(\tilde\beta_i^{\tilde V,W})_{i\in\tilde V}$ and
their restriction $\beta^{V,W}$ to $V$ defined by 
\begin{align}\label{eq:beta-definition}
\tilde\beta_i^{\tilde V,W}(u) =\frac12 \sum_{j\in \tilde V} W_{ij} e^{u_j-u_i}, 
\qquad \beta^{V,W}=\tilde\beta^{\tilde V,W}_V
=(\tilde\beta^{\tilde V,W}_i)_{i\in V}. 
\end{align}
These variables were introduced in \cite{sabot-tarres-zeng15}. 
We drop the dependence on $V$, $W$, or both if there is no risk of confusion. 

The following theorem describes the behavior of the supersymmetric
sigma model $\mu^W$ with respect to the scaling transformation 
$\s_{[a,b]}$ and is a fundamental ingredient
in this paper. Its extension to Grassmann variables is given 
in Theorem~\ref{thm:scaling-grassmann}.

\begin{theorem}
\label{thm:scaling}
Let $[a,b]\in\G_V$. The image of $\mu^{W^a}$ under the map $\s_{[a,b]}$ is 
absolutely 
continuous with respect to $\mu^W$ with the following Radon-Nikodym 
derivative on $\Omega_V$: 
\begin{align}
\frac{ d(\s_{[a,b]} \mu^{W^a}) }{d\mu^W} (u,s)
= & \cL^W(a,b)^{-1}e^{-\langle(a^2+b^2-1)_V,\beta^W(u)\rangle-\langle b_V,\theta^W(u,s)\rangle}
\label{eq:claim-thm-image-measure}
\end{align}
with the constant 
\begin{align}
\cL^W(a,b)
:= & \prod_{(i\sim j)\in \tilde E} e^{-W_{ij}(a_ia_j + b_ib_j -1)}
\cdot \prod_{j\in V} \frac{1}{a_j} .
\label{eq:claim-laplace-trafo}
\end{align}
In other words, for any measurable function $f:\Omega_V\to\R_0^+$, one has 
\begin{align}
\label{eq:claim-thm-image-measure-integral-version-new}
\E_{\mu^W}\left[ f(u,s) 
e^{-\langle(a^2+b^2-1)_V,\beta^W(u)\rangle-\langle b_V,\theta^W(u,s)\rangle} \right] 
=& \cL^W(a,b)
\E_{\mu^{W^a}}[ f \circ \s_{[a,b]}].
\end{align}
In particular, $\cL^W$ describes the joint Laplace transform of $\beta^W$ and 
$\theta^W$:
\begin{align}
\label{eq:L-a-b}
\cL^W(a,b)
= & \E_{\mu^W}\left[ 
e^{-\langle(a^2+b^2-1)_V,\beta^W(u)\rangle-\langle b_V,\theta^W(u,s)\rangle} \right] .
\end{align}
\end{theorem}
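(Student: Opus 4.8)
The plan is to read the density off the pushforward change-of-variables formula and then expand. By \eqref{eq:def-rho}--\eqref{eq:def-mu-W} the measure $\mu^W$ has Lebesgue density $h^W(u,s)=(2\pi)^{-|V|}\rho^W(u,s)\,e^{-\sum_{i\in V}u_i}$ with respect to $\prod_{i\in V}du_i\,ds_i$, so the image $\s_{[a,b]}\mu^{W^a}$ has Lebesgue density $h^{W^a}(\s_{[a,b]}^{-1}(u,s))\,\bigl|\det D\s_{[a,b]}^{-1}(u,s)\bigr|$. Since $\s_{[a,b]}^{-1}(u,s)=(\tilde u,\tilde s)=(u-\log a,\,s+e^{-u}b)$ by \eqref{eq:def-S-r-gamma-inv}, its differential is block lower triangular in the $(u,s)$ ordering with identity diagonal blocks, hence has determinant $1$. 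Thus the claimed derivative \eqref{eq:claim-thm-image-measure} reduces to the pointwise ratio $h^{W^a}(\tilde u,\tilde s)/h^W(u,s)$, and the remaining work is to expand this ratio and recognize it.

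The crucial structural input is that the matrix $A$ is invariant under the joint replacement $u\mapsto u-\log a$, $W\mapsto W^a$: every entry of $A^W(u)$ in \eqref{eq:def-A} depends on $W$ and $u$ only through the products $W_{ij}e^{u_i+u_j}$, and $W^a_{ij}e^{\tilde u_i+\tilde u_j}=a_ia_jW_{ij}e^{u_i+u_j}/(a_ia_j)=W_{ij}e^{u_i+u_j}$. Therefore $A^{W^a}(\tilde u)=A^W(u)$, which makes the determinants in $\rho^{W^a}(\tilde u,\tilde s)$ and $\rho^W(u,s)$ cancel and lets both quadratic forms be evaluated with the same matrix. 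I will also use the graph-Laplacian identity $\langle s,A^W(u)s\rangle=\sum_{(i\sim j)\in\tilde E}W_{ij}e^{u_i+u_j}(s_i-s_j)^2$, valid because $A^W(u)$ has vanishing row sums.

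With these reductions the ratio splits into three contributions, treated in turn. First, the reference-measure factor $e^{-\sum_{i\in V}u_i}$ contributes $e^{\sum_{i\in V}\log a_i}=\prod_{i\in V}a_i$, matching the product $\prod_{j\in V}1/a_j$ in $\cL^W(a,b)^{-1}$. Second, the $\cosh$ factors: writing $\tilde u_i-\tilde u_j=(u_i-u_j)-\log(a_i/a_j)$ and applying the addition formula for $\cosh$, the exponent $-W^a_{ij}[\cosh(\tilde u_i-\tilde u_j)-1]+W_{ij}[\cosh(u_i-u_j)-1]$ collapses to $W_{ij}(a_ia_j-1)+\tfrac12W_{ij}[(1-a_j^2)e^{u_i-u_j}+(1-a_i^2)e^{u_j-u_i}]$; summing over edges and recognizing $\beta^W$ from \eqref{eq:beta-definition} gives $\sum_{(i\sim j)}W_{ij}(a_ia_j-1)-\langle(a^2-1)_V,\beta^W(u)\rangle$. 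Third, the shifted Gaussian weight: with $\tilde s=s+e^{-u}b$ one has $(\tilde s_i-\tilde s_j)^2-(s_i-s_j)^2=2(s_i-s_j)(e^{-u_i}b_i-e^{-u_j}b_j)+(e^{-u_i}b_i-e^{-u_j}b_j)^2$, so after multiplying by $-\tfrac12W_{ij}e^{u_i+u_j}$ and summing, the cross term yields exactly $-\langle b_V,\theta^W(u,s)\rangle$ via \eqref{eq:theta-componentwise}, while the purely $b$-quadratic remainder separates into the edge contribution $\sum_{(i\sim j)}W_{ij}b_ib_j$ and the diagonal contribution $-\langle b^2_V,\beta^W(u)\rangle$. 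Throughout, the pinning $a_\delta=1$, $b_\delta=0$ is what lets me freely pass between sums over $\tilde V$ and over $V$.

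Collecting the three pieces gives precisely $\prod_{i\in V}a_i\cdot\exp\bigl[\sum_{(i\sim j)}W_{ij}(a_ia_j+b_ib_j-1)-\langle(a^2+b^2-1)_V,\beta^W(u)\rangle-\langle b_V,\theta^W(u,s)\rangle\bigr]$, which equals $\cL^W(a,b)^{-1}e^{-\langle(a^2+b^2-1)_V,\beta^W\rangle-\langle b_V,\theta^W\rangle}$ by \eqref{eq:claim-laplace-trafo}, proving \eqref{eq:claim-thm-image-measure}. The integral identity \eqref{eq:claim-thm-image-measure-integral-version-new} then follows from the defining property $\int f\,d(\s_{[a,b]}\mu^{W^a})=\int f\circ\s_{[a,b]}\,d\mu^{W^a}$ together with \eqref{eq:claim-thm-image-measure}, and \eqref{eq:L-a-b} is the special case $f\equiv1$, using that $\mu^{W^a}$ is a probability measure so the right-hand side equals $\cL^W(a,b)$. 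I expect the main obstacle to be purely organizational: the edge-sum bookkeeping in the $\cosh$ step (correctly symmetrizing the two orientations of each edge to produce $\beta^W$) and the clean separation of the linear-in-$s$, cross, and diagonal terms in the shifted-Gaussian step, rather than any conceptual difficulty once the invariance $A^{W^a}(\tilde u)=A^W(u)$ is in hand.
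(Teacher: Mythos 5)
Your proposal is correct and follows essentially the same route as the paper's first proof: the transformation of the reference measure (your Jacobian-equals-one observation for $\s_{[a,b]}^{-1}$ acting on Lebesgue measure is exactly Lemma~\ref{le:zeta-transformed-with-s} in disguise, producing the factor $\prod_{i\in V}a_i$), combined with the invariance $A^{W^a}(\tilde u)=A^W(u)$ of Lemma~\ref{le:transformed-A} and an expansion of the exponents identifying $\beta^W$, $\theta^W$, and the edge constants, which is the content of Lemma~\ref{le:ratio-of-densities}. The only difference is presentational: you carry out the $\cosh$ and shifted-Gaussian bookkeeping edge by edge with addition formulas, where the paper packages the same cancellations into quadratic forms in the matrix $H^W_{\tilde\beta(u)}$.
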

The special case $b=0$ was proven as Theorem 3.1 in 
\cite{disertori-merkl-rolles2017}. For $a=\sqrt{1+\lambda}$ and $b=0$
the Laplace transform $\cL^W(a,b)$ in \eqref{eq:L-a-b} equals 
the Laplace transform $\cL^W(\lambda)$ given by formula (2.9) in 
\cite{disertori-merkl-rolles2017}. 

\subsection{Results in infinite volume}
\label{subsect:infinite}
Let $G_\infty=(V_\infty,E_\infty)$ be an infinite graph with edge weights 
$W_{ij}$. We approximate $G_\infty$ by finite graphs with 
wired boundary conditions $\tilde G_n=(\tilde V_n,\tilde E_n)$, where 
$\tilde V_n=V_n\cup\{\delta_n\}$, $V_n\uparrow V_\infty$, and 
\begin{align}
\tilde E_n= E_n\cup\{(i\sim\delta_n):i\in V_n\text{ and }\exists 
j\in V_\infty\setminus V_n\text{ such that }(i\sim j)\in E_\infty\}. 
\end{align}
We endow the edges
of $\tilde G_n$ with the weights 
\begin{align}
\label{eq:def-weights-W-n-vertices}
& W^{(n)}_{ij}= W_{ij} \quad\text{ if } i\in V_n\text{ and }j\in V_n, \\
& W^{(n)}_{i\delta_n}=W^{(n)}_{\delta_ni}
=\sum_{j\in V_\infty\setminus V_n} W_{ij} 
\quad\text{ for }i\in V_n, \quad\text{and}\quad W^{(n)}_{\delta_n\delta_n}=0.
\label{eq:def-weights-W-n-delta}
\end{align}
Let $\mu_n^W$ denote the $H^{2|2}$ measure defined in \eqref{eq:def-mu-W} 
for the graph $\tilde G_n$ with the weights $W_{ij}^{(n)}$.

\begin{lemma}[Kolmogorov consistency]
\label{le:consistency}
For $n\in\N$, the joint Laplace transform 
\begin{align}
\cL^W_n(a,b)=
\E_{\mu^W_n}\left[e^{-\sk{(a^2+b^2-1)_{V_n},\beta^{V_n}}-\sk{b_{V_n},\theta^{V_n}}}\right]
\end{align}
of $\beta^{V_n}=(\beta_i)_{i\in V_n}$ and $\theta^{V_n}=(\theta_i)_{i\in V_n}$ satisfies 
the consistency relation 
\begin{align}
\label{eq:Ln-Ln+1equal}
\cL^W_n(a_{V_n},b_{V_n})=\cL^W_{n+1}(a,b),
\end{align}
for all $[a,b]\in\G_{V_{n+1}}$ with $[a_i,b_i]=[1,0]$ for 
all $i\in \tilde V_{n+1}\setminus V_n$. 
In particular, the law of $(\beta^{V_n},\theta^{V_n})$ with respect to $\mu^W_n$ agrees with 
the law of $(\beta^{V_{n+1}},\theta^{V_{n+1}})|_{V_n}$ with respect to $\mu^W_{n+1}$.
\end{lemma}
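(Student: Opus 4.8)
The plan is to evaluate both sides of \eqref{eq:Ln-Ln+1equal} by means of the explicit product formula for the Laplace transform furnished by Theorem~\ref{thm:scaling}. Combining \eqref{eq:L-a-b} with \eqref{eq:claim-laplace-trafo}, applied to the finite graph $\tilde G_m$ with weights $W^{(m)}$ and non-pinned vertex set $V_m$, gives
\begin{align}
\cL^W_m(a,b)=\prod_{(i\sim j)\in\tilde E_m}e^{-W^{(m)}_{ij}(a_ia_j+b_ib_j-1)}\cdot\prod_{j\in V_m}\frac{1}{a_j}
\end{align}
for every $[a,b]\in\G_{V_m}$. I would invoke this with $m=n$ to compute the left-hand side and with $m=n+1$ to compute the right-hand side, using throughout that the argument $[a,b]\in\G_{V_{n+1}}$ satisfies $[a_i,b_i]=[1,0]$ for all $i\in\tilde V_{n+1}\setminus V_n=(V_{n+1}\setminus V_n)\cup\{\delta_{n+1}\}$.

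First I would match the prefactors. Since $a_j=1$ whenever $j\in V_{n+1}\setminus V_n$, the product $\prod_{j\in V_{n+1}}a_j^{-1}$ collapses to $\prod_{j\in V_n}a_j^{-1}$, which is exactly the prefactor occurring in $\cL^W_n(a_{V_n},b_{V_n})$. Next I would treat the edge factors by splitting $\tilde E_{n+1}$ according to the location of the endpoints. An edge with both endpoints in $V_n$ carries weight $W^{(n+1)}_{ij}=W_{ij}$ by \eqref{eq:def-weights-W-n-vertices}, contributing $e^{-W_{ij}(a_ia_j+b_ib_j-1)}$; these edges coincide with the edges of $\tilde E_n$ internal to $V_n$ with identical weights, and so match the corresponding factors on the left. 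For an edge touching a vertex in $\tilde V_{n+1}\setminus V_n$, the pinned endpoint has $(a,b)=(1,0)$: edges between two such vertices give factor $1$ and drop out, while an edge from $i\in V_n$ to a pinned vertex contributes $e^{-W_{ij}(a_i-1)}$.

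Collecting the surviving contributions vertex by vertex, the total exponent attached to $i\in V_n$ on the right equals
\begin{align}
-(a_i-1)\Big(\sum_{j\in V_{n+1}\setminus V_n}W_{ij}+\sum_{j\in V_\infty\setminus V_{n+1}}W_{ij}\Big)=-(a_i-1)\sum_{j\in V_\infty\setminus V_n}W_{ij},
\end{align}
where the second inner sum is $W^{(n+1)}_{i\delta_{n+1}}$ by \eqref{eq:def-weights-W-n-delta}. On the left, the only non-internal edge at $i$ is the one to $\delta_n$, yielding exponent $-(a_i-1)W^{(n)}_{i\delta_n}=-(a_i-1)\sum_{j\in V_\infty\setminus V_n}W_{ij}$, again by \eqref{eq:def-weights-W-n-delta}. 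The two expressions coincide, which establishes \eqref{eq:Ln-Ln+1equal}. The step I would handle most carefully, and the true heart of the argument, is exactly this telescoping of the wired boundary weights: the weight from $i$ to the coarse boundary $\delta_n$ is reconstituted from the weights to the freshly exposed vertices $V_{n+1}\setminus V_n$ together with the weight to the finer boundary $\delta_{n+1}$.

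Finally, to pass from equality of Laplace transforms to equality of laws, I would observe that restricting $\cL^W_{n+1}$ to arguments pinned on $\tilde V_{n+1}\setminus V_n$ kills the $V_{n+1}\setminus V_n$ components in both exponents (there $a_i^2+b_i^2-1=0$ and $b_i=0$), so the restricted transform is the Laplace transform of $(\beta^{V_{n+1}},\theta^{V_{n+1}})|_{V_n}$; by \eqref{eq:Ln-Ln+1equal} it coincides with that of $(\beta^{V_n},\theta^{V_n})$. Since the map $(a,b)\mapsto(a^2+b^2-1,b)$ sweeps out an open region of dual parameters on which these transforms are finite and jointly analytic, the transform determines the joint law uniquely, and the asserted agreement of the two laws follows.
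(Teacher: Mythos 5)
Your proposal is correct and follows essentially the same route as the paper: both evaluate $\cL^W_n$ and $\cL^W_{n+1}$ via the explicit product formula from Theorem~\ref{thm:scaling}, match the prefactors using $a_j=1$ off $V_n$, split the edges of $\tilde E_{n+1}$ into the same three cases, and establish the identity through the same telescoping of the wired boundary weights $W^{(n)}_{i\delta_n}=W^{(n+1)}_{i\delta_{n+1}}+\sum_{j\in V_{n+1}\setminus V_n}W_{ij}$. The concluding passage from equality of Laplace transforms on an open parameter region to equality of laws likewise matches the paper's analytic-continuation argument.
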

Consistency of the law of $\beta$ was first observed by Sabot and 
Zeng in \cite{sabot-zeng15}; see also Lemma 2.4 in 
\cite{disertori-merkl-rolles2017}.

By Kolmogorov's consistency theorem, there is a probability space 
$(\Omega_\infty,\F_\infty,\mu_\infty^W)$ with random variables 
$\pmb{\beta}_i,\pmb{\theta}_i:\Omega_\infty\to\R$, $i\in V_\infty$, such that for 
all $n\in\N$ the law of 
\begin{align}
\label{eq:def-beta-n-theta-n}
\big({\pmb{\beta}}^{(n)}=({\pmb{\beta}}_i)_{i\in V_n}, 
{\pmb{\theta}}^{(n)}=({\pmb{\theta}}_i)_{i\in V_n}\big)
\end{align}
with respect to $\mu_\infty^W$ agrees with the law of 
$(\beta^{V_n},\theta^{V_n}):\Omega_{V_n}\to\R^{V_n}\times\R^{V_n}$ 
with respect to $\mu_n^W$. 
Moreover, by Lemma 2.3 in \cite{disertori-merkl-rolles2017}, for any finite
graph $\tilde G=(\tilde V,\tilde E)$ with $\tilde V=V\cup\{\delta\}$, there is a 
measurable function $f_V^W:\R^V\to\R^{\tilde V}$ such that 
\begin{align}
\label{eq:u-as-fn-of-beta}
(u_i)_{i\in\tilde V}=f_V^W(\beta^V). 
\end{align}
Using the definition \eqref{eq:def-theta} of $\theta^W$, we have 
$s_V=A^W_{VV}(u)^{-1}e^u_{VV}\theta^V(u,s)$. Hence, 
\begin{align}
\label{eq:s-as-fn-of-beta-theta}
(s_i)_{i\in\tilde V} = g_V^W(\beta^V,\theta^V)
\end{align}
with the measurable function $g_V^W:\R^V\times\R^V\to\R^{\tilde V}$,
$(\beta,\theta)\mapsto s=(s_i)_{i\in\tilde V}$ defined by 
$s_\delta=0$ and $s_V = A^W_{VV}(f_V^W(\beta))^{-1}e^{f_V^W(\beta)}_{VV}\theta$.
This allows us to couple the $u$ and $s$-variables. We define 
\begin{align}
\label{eq:def-u-n}
& u^{(n)}=(u_i^{(n)})_{i\in\tilde V_n}=f_{V_n}^W({\pmb{\beta}}^{(n)}), \\
\label{eq:def-s-n}
& s^{(n)}=(s_i^{(n)})_{i\in\tilde V_n}=g_{V_n}^W({\pmb{\beta}}^{(n)},{\pmb{\theta}}^{(n)}), \\
& u_i^{(n)}=s_i^{(n)}=0
\quad\text{ for }i\in V_\infty\setminus V_n.
\label{eq:extend-u-n}
\end{align}

We consider the following set of parameters 
\begin{align}
\label{eq:def-parameter-space}
(-\infty,0]^{(V_\infty)}
=\{ \alpha\in(-\infty,0]^{V_\infty}: \alpha_i\neq 0\text{ for only finitely many }i\in V_\infty
\}.
\end{align}
For $\alpha\in (-\infty,0]^{(V_\infty)}$ and $n\in\N$, we define 
$\alpha^{(n)}=(\alpha^{(n)}_i)_{i\in\tilde V_n}$ by 
\begin{align}
\label{eq:def-a-n}
\alpha^{(n)}_i=\alpha_i\quad\text{for }i\in V_n \qquad\text{and}\qquad
\alpha^{(n)}_{\delta_n}=\sum_{j\in V_\infty\setminus V_n}\alpha_j.
\end{align}

\begin{theorem}
\label{thm:generating-mg}
For all $\alpha\in (-\infty,0]^{(V_\infty)}$, the sequence $(M^{(n)}_\alpha)_{n\in\N}$,
defined by 
\begin{align}
\label{eq:def-M-n-theta}
M^{(n)}_\alpha:(u^{(n)},s^{(n)})\mapsto 
\exp\left( \sum_{j\in\tilde V_n} \alpha^{(n)}_je^{u^{(n)}_j}(1+is^{(n)}_j)
\right),
\end{align}
is a $\C$-valued martingale with respect to the filtration 
$\big(\F_n=\sigma(\pmb{\beta}^{(n)},{\pmb{\theta}}^{(n)})\big)_{n\in\N}$. 
\end{theorem}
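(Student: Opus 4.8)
The plan is to verify the three defining properties of a martingale---adaptedness, integrability, and the one-step identity $\E_{\mu_\infty^W}[M^{(n+1)}_\alpha\mid\F_n]=M^{(n)}_\alpha$---with essentially all the work concentrated in the last. Adaptedness is immediate: by the coupling \eqref{eq:def-u-n}--\eqref{eq:def-s-n}, $(u^{(n)},s^{(n)})$ is a deterministic function of $(\pmb\beta^{(n)},\pmb\theta^{(n)})$, so $M^{(n)}_\alpha$ is $\F_n$-measurable. Integrability is in fact boundedness: since $\alpha_j\le 0$ for all $j$ and $\alpha^{(n)}_{\delta_n}=\sum_{j\in V_\infty\setminus V_n}\alpha_j\le 0$ by \eqref{eq:def-a-n}, while $e^{u_j}>0$, the real part of the exponent in \eqref{eq:def-M-n-theta} equals $\sum_j\alpha^{(n)}_je^{u^{(n)}_j}\le 0$, whence $|M^{(n)}_\alpha|\le 1$ uniformly in $n$.

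For the one-step identity I would first reduce it to a statement in the single finite volume $\tilde G_{n+1}$. On $\Omega_\infty$ the restriction of $(\pmb\beta^{(n+1)},\pmb\theta^{(n+1)})$ to $V_n$ literally equals $(\pmb\beta^{(n)},\pmb\theta^{(n)})$, so for any bounded $Y=Y((\pmb\beta,\pmb\theta)|_{V_n})$ both $Y M^{(n+1)}_\alpha$ and $YM^{(n)}_\alpha$ depend on only finitely many of the coupled variables. Using the defining law-matching of $\mu_\infty^W$ together with the consistency Lemma~\ref{le:consistency}, I would rewrite both $\E_{\mu_\infty^W}[YM^{(n+1)}_\alpha]$ and $\E_{\mu_\infty^W}[YM^{(n)}_\alpha]$ as expectations under $\mu^W_{n+1}$. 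The martingale property then becomes the purely finite-volume claim
\[
\E_{\mu^W_{n+1}}\!\big[M^{(n+1)}_\alpha\;\big|\;\sigma((\beta,\theta)|_{V_n})\big]=M^{(n)}_\alpha\big((\beta,\theta)|_{V_n}\big)\qquad\mu^W_{n+1}\text{-a.s.},
\]
where on the right $M^{(n)}_\alpha$ is reconstructed through the level-$n$ maps $f_{V_n}^W,g_{V_n}^W$ of \eqref{eq:u-as-fn-of-beta}--\eqref{eq:s-as-fn-of-beta-theta} applied to the restricted variables.

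The core is thus to integrate out, under $\mu^W_{n+1}$, the degrees of freedom not fixed by $(\beta,\theta)|_{V_n}$ and to recover exactly the level-$n$ expression. Writing $z_i:=e^{u_i}(1+is_i)$, the elementary identity $\sum_{j\in\tilde V}W_{ij}z_j=2\tilde\beta_iz_i-i\theta_i$ (immediate from \eqref{eq:theta-componentwise} and \eqref{eq:beta-definition}, with $z_\delta=1$ at the pinned vertex) is the algebraic engine: it exhibits $z$ as the solution of a linear system governed by $(\beta,\theta)$ and explains how the single boundary term $\alpha^{(n)}_{\delta_n}$ of level $n$ is redistributed, via \eqref{eq:def-a-n}, among the new vertices $V_{n+1}\setminus V_n$ and the new pin $\delta_{n+1}$, the excess exponent being $\sum_{j\in V_{n+1}\setminus V_n}\alpha_j(z_j-1)$ together with the interior correction from the differing reconstructions. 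To perform the integration I would add the vertices of $V_{n+1}\setminus V_n$ one at a time and use the explicit density \eqref{eq:def-rho}--\eqref{eq:def-mu-W}: the $s$-integral is Gaussian in $s$ with precision $A^W_{VV}(u)$, the determinant $\det A^W_{VV}(u)$ supplying the normalization, and Theorem~\ref{thm:scaling} is the natural tool to absorb the mismatch between the weights $W^{(n+1)}$ and the merged weights $W^{(n)}$ at the wired boundary.

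The main obstacle is precisely this marginalization. Two features make it delicate. First, the exponent $\sum_j\alpha_jz_j$ is genuinely complex and is \emph{not} a linear functional of $(\beta,\theta)$---it passes through the nonlinear reconstruction $z=z(\beta,\theta)$---so the identity is not a direct specialization of the Laplace transform \eqref{eq:claim-thm-image-measure-integral-version-new}. Second, conditioning on $(\beta,\theta)|_{V_n}$ fixes neither the interior values $z_j$ for $j\in V_n$ nor the boundary values at level $n+1$, since both reconstructions still depend on the random $(\beta,\theta)$ on $V_{n+1}\setminus V_n$; hence the interior and boundary contributions must be shown to recombine, after integration, into the level-$n$ expression. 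I expect the determinant factor in \eqref{eq:def-rho} together with the harmonicity identity above to produce the necessary cancellations, much as in the $b=0$, $s$-independent special case treated in \cite{disertori-merkl-rolles2017}.
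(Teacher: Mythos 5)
Your reduction is sound as far as it goes: adaptedness and the uniform bound $|M^{(n)}_\alpha|\le 1$ are correct, and passing to the finite-volume identity
$\E_{\mu^W_{n+1}}[M^{(n+1)}_\alpha\mid\sigma((\beta,\theta)|_{V_n})]=M^{(n)}_\alpha$ via the coupling and Lemma~\ref{le:consistency} is exactly the right first move. The algebraic identity $\sum_{j}W_{ij}z_j=2\tilde\beta_iz_i-i\theta_i$ with $z_i=e^{u_i}(1+is_i)$ is also correct. But the core of the argument --- the actual verification of the one-step identity --- is left as a plan whose decisive step you yourself flag with ``I expect \dots\ to produce the necessary cancellations.'' That is the genuine gap. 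The cancellation you are hoping for is not something that falls out of Gaussian integration in $s$ plus the determinant normalization: after integrating out $s$ conditionally on $u$ one is left with
$\E_{\mu^W}\bigl[e^{\sk{\alpha,e^u(1+is)}}\bigr]
=\int \det A^W_{VV}(u)\,e^{-\frac12\sk{e^{-u},A^W(u)e^{-u}}}\,e^{\sk{\alpha,e^u}}\,
e^{-\frac12\sk{(\alpha e^u)_V,(A^W_{VV}(u))^{-1}(\alpha e^u)_V}}\prod_{i\in V}\tfrac{e^{-u_i}}{2\pi}\,du_i$,
and the fact that this equals $e^{\sk{\alpha,1}}$ is a supersymmetric localization (Ward) identity --- Corollary~5.3 of \cite{disertori-merkl-rolles2017} --- not an elementary computation. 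Without invoking that identity (or reproving it), the ``vertex-by-vertex'' marginalization will not close.

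The paper's route avoids the direct conditional computation entirely: it tests both sides against the joint Laplace-transform family $e^{-\sk{(a^2+b^2-1)_{V_n},\pmb\beta}-\sk{b_{V_n},\pmb\theta}}$, which determines the conditional expectation; then Theorem~\ref{thm:scaling} converts
$\E_{\mu^W_{n+1}}[\tilde M^{(n+1)}_\alpha e^{-\sk{\cdots}}]$ into
$\cL^W_{n+1}(a,b)\,\E_{\mu^{W^a}_{n+1}}[\tilde M^{(n+1)}_\alpha\circ\s_{[a,b]}]$, the composed function is again of the form $e^{\sk{a\alpha^{(n+1)},e^u(1+is)}}$ up to a constant, and the Ward identity evaluates the remaining expectation in closed form as $e^{\sk{a\alpha^{(n+1)},1}}$. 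Both sides then collapse to $\cL^W_{n+1}(a,b)\,e^{\sk{\alpha^{(n+1)},a-ib}}$, and Lemma~\ref{le:consistency} together with $\sk{\alpha^{(n+1)},a-ib}=\sk{\alpha^{(n)},a-ib}$ finishes the proof. To repair your proposal you need two things you currently lack: (i) the replacement of the conditional expectation by the Laplace-transform test-function family (so that the scaling theorem applies in the clean form \eqref{eq:claim-thm-image-measure-integral-version-new}), and (ii) the exact evaluation $\E_{\mu^W}[e^{\sk{\alpha,e^u(1+is)}}]=e^{\sk{\alpha,1}}$, which is the irreducible supersymmetric input.
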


Taking derivatives of the martingale $(M^{(n)}_\alpha)_{n\in\N}$ at
$\alpha=0$, we obtain the following hierarchy of martingales.

\begin{corollary}
\label{eq:cor-derivatives-are-mgs}
For all $k\in\N$ and $j_1,\ldots,j_k\in V_\infty$, 
\begin{align}
M_{j_1,\ldots,j_k}^{(n)}=\prod_{l=1}^k e^{u_{j_l}^{(n)}}(1+is_{j_l}^{(n)}), 
\quad
n\in\N, 
\end{align}
its real and imaginary part are martingales with respect to 
$\big(\F_n=\sigma(\pmb{\beta}^{(n)},{\pmb{\theta}}^{(n)})\big)_{n\in\N}$. 
\end{corollary}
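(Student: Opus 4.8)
The plan is to realise the products $M^{(n)}_{j_1,\dots,j_k}$ as mixed $\alpha$-derivatives of the generating martingale $M^{(n)}_\alpha$ of Theorem~\ref{thm:generating-mg} at $\alpha=0$, and then to transfer the martingale property by differentiating the integral form of the martingale identity. The first step is to rewrite the exponent of $M^{(n)}_\alpha$ so that its dependence on every coordinate $\alpha_j$, $j\in V_\infty$, is manifestly affine. Since $u^{(n)}_{\delta_n}=s^{(n)}_{\delta_n}=0$, the $\delta_n$-summand contributes the factor $1=e^{u^{(n)}_{\delta_n}}(1+is^{(n)}_{\delta_n})$; inserting $\alpha^{(n)}_{\delta_n}=\sum_{j\in V_\infty\setminus V_n}\alpha_j$ and using the convention $u^{(n)}_j=s^{(n)}_j=0$ for $j\in V_\infty\setminus V_n$, one rewrites
\begin{align}
M^{(n)}_\alpha=\exp\left(\sum_{j\in V_\infty}\alpha_j\,e^{u^{(n)}_j}(1+is^{(n)}_j)\right),
\end{align}
where the coefficient of each $\alpha_j$ is precisely $e^{u^{(n)}_j}(1+is^{(n)}_j)$.

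Because this exponent is linear in the variables $\alpha_j$, repeated partial differentiation merely brings down the corresponding coefficients, and evaluation at $\alpha=0$ (where $M^{(n)}_0=1$) gives
\begin{align}
\frac{\partial^k M^{(n)}_\alpha}{\partial\alpha_{j_1}\cdots\partial\alpha_{j_k}}\bigg|_{\alpha=0}
=\prod_{l=1}^k e^{u^{(n)}_{j_l}}(1+is^{(n)}_{j_l})=M^{(n)}_{j_1,\dots,j_k},
\end{align}
for every $n$ and for arbitrary, possibly repeated, indices $j_1,\dots,j_k$; the same formula holds with $M^{(n)}_\alpha$ replaced by $M^{(n+1)}_\alpha$.

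Two estimates make the differentiation under the expectation legitimate. First, for $\alpha$ in the parameter domain every coordinate satisfies $\alpha_j\le 0$, whence $\re\bigl(\sum_j\alpha_j e^{u^{(n)}_j}(1+is^{(n)}_j)\bigr)=\sum_j\alpha_j e^{u^{(n)}_j}\le 0$ and therefore $|M^{(n)}_\alpha|\le 1$ uniformly in $\alpha$ and $n$. Second, I would prove the joint moment bound
\begin{align}
\E\left[\prod_{l=1}^k e^{u^{(n)}_{j_l}}\bigl(1+|s^{(n)}_{j_l}|\bigr)\right]<\infty,
\end{align}
and, by the same argument, finiteness of the analogous moment for every sub-collection of the indices $j_1,\dots,j_k$. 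This should follow from the finite-volume structure of $\mu^W_n$: conditionally on $u$ the field $s$ is centred Gaussian with covariance $A^{W^{(n)}}_{V_nV_n}(u)^{-1}$, while the $u$-marginal carries the super-exponential weight $\prod_{(i\sim j)\in\tilde E_n}e^{-W^{(n)}_{ij}[\cosh(u_i-u_j)-1]}$, which dominates any product of factors $e^{u_j}$; integrating out $s$ by Wick's formula and then $u$ yields finiteness of all such mixed moments.

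With these facts the conclusion is immediate. For every $A\in\F_n$ the martingale property of Theorem~\ref{thm:generating-mg} gives the scalar identity $\E[M^{(n+1)}_\alpha\mathbf{1}_A]=\E[M^{(n)}_\alpha\mathbf{1}_A]$ for all $\alpha$ in the domain. Differentiating both sides in $\alpha_{j_1},\dots,\alpha_{j_k}$ and letting $\alpha\to 0$ from within $(-\infty,0]$, the mean value theorem bounds each one-sided difference quotient by the integrable products of the previous paragraph times $|M_\alpha|\le 1$, so dominated convergence moves the derivatives inside the expectation; since the two sides agree as functions of $\alpha$, so do their derivatives at $\alpha=0$, giving
\begin{align}
\E\bigl[M^{(n+1)}_{j_1,\dots,j_k}\mathbf{1}_A\bigr]=\E\bigl[M^{(n)}_{j_1,\dots,j_k}\mathbf{1}_A\bigr]\qquad\text{for all }A\in\F_n.
\end{align}
Combined with the integrability above and the adaptedness of $M^{(n)}_{j_1,\dots,j_k}$ --- a function of $(u^{(n)},s^{(n)})$ and hence, via \eqref{eq:def-u-n} and \eqref{eq:def-s-n}, of $(\pmb{\beta}^{(n)},\pmb{\theta}^{(n)})$ --- this is exactly the $\C$-valued martingale property, and taking real and imaginary parts produces the two real martingales. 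I expect the main obstacle to be the justification of differentiating under the expectation at the boundary point $\alpha=0$: one needs an $\alpha$-uniform integrable dominating function for the one-sided difference quotients of all orders up to $k$, which is precisely what the joint moment bound supplies.
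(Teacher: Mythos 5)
Your proposal is correct and follows essentially the same route as the paper: realize $M^{(n)}_{j_1,\ldots,j_k}$ as the mixed left-sided $\alpha$-derivative of $M^{(n)}_\alpha$ at $\alpha=0$ and differentiate the identity $\E[M^{(n+1)}_\alpha 1_A]=\E[M^{(n)}_\alpha 1_A]$, using $|M^{(n)}_\alpha|\le 1$ on $(-\infty,0]^{(V_\infty)}$ to dominate the difference quotients by $|M^{(n)}_{j_1,\ldots,j_k}|$. The only difference is that you spell out the integrability of $\prod_{l}e^{u^{(n)}_{j_l}}\bigl(1+|s^{(n)}_{j_l}|\bigr)$ via the conditional Gaussian structure, a point the paper leaves implicit.
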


\noindent In \cite{disertori-merkl-rolles2017}, we showed that 
$(\E_{\mu^W_\infty}[M^{(n)}_\alpha|\sigma(u^{(n)})])_{n\in\N}$ and 
$(\E_{\mu^W_\infty}[M_{j_1,\ldots,j_k}^{(n)}|\sigma(u^{(n)})])_{n\in\N}$ are martingales. These facts are also
immediate consequences of Theorem~\ref{thm:generating-mg} 
and Corollary~\ref{eq:cor-derivatives-are-mgs}.

\section{The marginal $\mu^W (du\, ds)$}
\label{sect:2}

We start with two different proofs for Theorem~\ref{thm:scaling}.
\begin{itemize}
\item The first proof is
based on Lemma~\ref{le:ratio-of-densities} below which describes the ratio
between the original and $\s$-transformed probability density of two supersymmetric
sigma models with different parameters. 
Also for this lemma two different proofs
are given. 
\begin{itemize}
\item The first proof is based on explicit computations on the quadratic form
associated to the matrix $A^{W}$ defined in equation \eqref{eq:def-A}.  
\item The second proof uses the description of the density of the supersymmetric
sigma model in terms of $2\times 2$ determinants connected to the linear algebra
of Weyl spinors.
\end{itemize}
Both these proofs are self-contained. 
\item The second proof of Theorem~\ref{thm:scaling}  uses conditioning on the $u$
variables and a result from \cite{disertori-merkl-rolles2017}.
\end{itemize}
Finally Section \ref{subsec:martin} contains the proof of the martingale property.

\subsection{First proof of Theorem~\ref{thm:scaling}}

The group $\G_V$ introduced in formula \eqref{eq:def-G-V} acts  on the set $\M$ of 
measures on $\Omega_V$:
\begin{align}
\G_V\times\M\to\M,\quad (v,\mu)\mapsto \s_v\mu,
\end{align}
where $\s_v\mu$ denotes the image measure of $\mu$ with respect to 
the map $\s_v:\Omega_V\to\Omega_V$. 
In particular, using the measure $\zeta$ introduced in formula
\eqref{eq:def-zeta}, we consider the product 
\begin{align}
\label{eq:def-zeta-V}
\zeta_V:=\zeta^V\times\delta_{(0,0)}
\end{align}
composed of factors $\zeta$ indexed by $V$ and one Dirac measure located at 
$(0,0)\in\R^2$ indexed by the special vertex $\delta$. 

\begin{lemma}
\label{le:zeta-transformed-with-s}
For $[a,b]\in\G_V$, one has 
\begin{align}
\label{eq:zeta-transformed-with-s}
\s_{[a,b]}\zeta_V=\left(\prod_{i\in V}a_i\right)\zeta_V
\end{align}
\end{lemma}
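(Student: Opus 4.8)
The plan is to compute the image measure $\s_{[a,b]}\zeta_V$ by reducing everything to a single factor and tracking how the Jacobian of $\s_{[a,b]}$ acts on the reference measure $\zeta$. The key observation is that $\s_{[a,b]}$ acts componentwise on $\Omega_V$ by \eqref{eq:def-scaling-s}, and $\zeta_V$ is a product of factors $\zeta$ over $i\in V$ together with a Dirac measure at the pinned vertex $\delta$. Since $[a_\delta,b_\delta]=[1,0]$, the map $\s_{[a,b]}$ fixes the $\delta$-component, so the Dirac factor $\delta_{(0,0)}$ is preserved and contributes nothing; I only need to understand the effect on each single factor $\zeta$ indexed by $i\in V$.

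First I would recall from \eqref{eq:transf-right-reference-measure} that the Lebesgue measure $\lambda=da\,db$ on $\G$, written in $(u,s)$-coordinates as $\zeta$, transforms under the right operation $\cR_{[a',b']}$ by the factor $1/a'$. The map $\s_{[a,b]}$ is defined in \eqref{eq:S-in-terms-of-right-operation} precisely as $\iota\circ\cR_{[a,b]^{-1}}\circ\iota^{-1}$, i.e. as the right operation by the inverse element $[a,b]^{-1}=[1/a,-b/a]$. Substituting $[a',b']=[1/a,-b/a]$ into the scaling factor from \eqref{eq:transf-right-reference-measure} gives a factor of $1/a'=1/(1/a)=a$ per component. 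Thus each single factor $\zeta$ indexed by $i\in V$ picks up a factor $a_i$ under $\s_{[a_i,b_i]}$, and taking the product over $i\in V$ yields exactly $\prod_{i\in V}a_i$, which is the claimed identity.

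To make this rigorous I would first establish the one-component statement: for the map $\s_{[a_i,b_i]}$ acting on the single pair $(u_i,s_i)$, one has $\s_{[a_i,b_i]}\zeta=a_i\,\zeta$. This is just the change-of-variables formula applied to the invertible affine-type map in \eqref{eq:def-scaling-s}; alternatively it is the identification $\zeta=\lambda$ combined directly with \eqref{eq:transf-right-reference-measure} evaluated at $v'=[1/a_i,-b/a_i]$. Because $\s_{[a,b]}$ factorizes over the vertices of $\tilde V$ and the measure $\zeta_V$ is a product, the image measure factorizes as well, so the per-component factors multiply.

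The main obstacle, though a minor one, is bookkeeping the direction of the scaling: the Radon-Nikodym/Jacobian factor for the right operation $\cR_{v'}$ is $1/a'$ where $a'$ is the \emph{first coordinate of the acting element}, and since $\s_{[a,b]}$ acts through the inverse element whose first coordinate is $1/a_i$, the factor comes out as $a_i$ rather than $1/a_i$. Getting this inversion right is the only place where a sign or reciprocal error could creep in, so I would double-check it by verifying consistency at the neutral element $[1,0]$, where $\s_{[1,0]}=\id$ must give the factor $\prod_{i\in V}1=1$, and with the group action property \eqref{eq:s-group-action}, under which the scaling factors should multiply exactly as the $a$-coordinates multiply under the group law \eqref{eq:group-op}. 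Once the single-component case and the factorization are in place, the lemma follows immediately.
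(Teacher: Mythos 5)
Your proof is correct and follows exactly the route the paper takes: the paper's own proof consists of the single sentence that the lemma is an immediate consequence of \eqref{eq:S-in-terms-of-right-operation} and \eqref{eq:transf-right-reference-measure}, which is precisely the componentwise reduction and reciprocal bookkeeping you carry out in detail. Your expanded justification, including the check at the neutral element and the preservation of the Dirac factor at $\delta$, is a faithful elaboration of the same argument.
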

\begin{proof}
This is an immediate consequence of formulas 
\eqref{eq:S-in-terms-of-right-operation}
and \eqref{eq:transf-right-reference-measure}. 
\end{proof}

\begin{lemma}[Ratio of densities]
\label{le:ratio-of-densities}
For $[a,b]\in\G_V$ and $(u,s)\in\Omega_V$, one has 
\begin{align}
\label{eq:ratio-of-densities-rho}
\frac{\rho^{W^a}(\s^{-1}_{[a,b]}(u,s))}{\rho^W(u,s)}
=\prod_{(i\sim j)\in \tilde E} e^{W_{ij}(a_ia_j + b_ib_j -1)}
\prod_{i\in V}\exp[-(a_i^2+b_i^2-1)\beta_i^W(u)-b_i\theta_i^W(u,s) ]. 
\end{align}
\end{lemma}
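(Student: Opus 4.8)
The plan is to substitute the explicit formula for $\s^{-1}_{[a,b]}$ from \eqref{eq:def-S-r-gamma-inv}, namely $\s^{-1}_{[a,b]}(u,s)=(\tilde u,\tilde s)$ with $\tilde u_i=u_i-\log a_i$ and $\tilde s_i=s_i+e^{-u_i}b_i$, into the product form of $\rho^{W^a}$ given by the first line of \eqref{eq:def-rho}, and then to compare it factor by factor with $\rho^W(u,s)$. Throughout I would use $a_\delta=1$ and $b_\delta=0$ (which hold since $[a,b]\in\G_V$) to discard every contribution of the pinned vertex $\delta$, so that sums effectively range over $V$ where $\beta^W$ and $\theta^W$ live.

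The first and decisive step is the observation that the matrix $A$ is invariant under the simultaneous rescaling of $u$ and $W$: from the definition \eqref{eq:def-A}, together with $W^a_{ij}=a_ia_jW_{ij}$ and $e^{\tilde u_i+\tilde u_j}=e^{u_i+u_j}/(a_ia_j)$, one checks for both the off-diagonal and the diagonal entries that $A^{W^a}(\tilde u)=A^W(u)=:A$. Consequently the determinant factor $\det A^{W^a}_{VV}(\tilde u)=\det A^W_{VV}(u)$ cancels in the ratio, and the two quadratic forms are computed with the same symmetric matrix $A$. Writing $\tilde s=s+w$ with $w_i=e^{-u_i}b_i$ (note $w_\delta=0$) and using the symmetry of $A$, the quadratic-form contribution to the logarithm of the ratio equals $\tfrac12\langle s,As\rangle-\tfrac12\langle\tilde s,A\tilde s\rangle=-\langle w,As\rangle-\tfrac12\langle w,Aw\rangle$. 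For the first term I would use that $(As)_i=e^{u_i}\theta_i^W$, which is just \eqref{eq:theta-componentwise} rewritten, to obtain $\langle w,As\rangle=\langle b_V,\theta^W\rangle$. For the second term I would use the edge representation $\langle x,Ax\rangle=\sum_{(i\sim j)\in\tilde E}W_{ij}e^{u_i+u_j}(x_i-x_j)^2$; expanding the square and folding each diagonal piece into the definition \eqref{eq:beta-definition} of $\beta^W$ yields $\langle w,Aw\rangle=2\sum_{i\in V}b_i^2\beta_i^W-2\sum_{(i\sim j)\in\tilde E}W_{ij}b_ib_j$.

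The remaining factor is the product of the $\cosh$ terms. Here I would write $\tilde u_i-\tilde u_j=(u_i-u_j)-\log(a_i/a_j)$ and apply the addition formula $\cosh(\Delta-\ell)=\cosh\Delta\cosh\ell-\sinh\Delta\sinh\ell$ with $\cosh\ell=(a_i^2+a_j^2)/(2a_ia_j)$ and $\sinh\ell=(a_i^2-a_j^2)/(2a_ia_j)$. Multiplying by $W^a_{ij}=a_ia_jW_{ij}$ clears the denominators and collapses everything to the clean identity $W^a_{ij}\cosh(\tilde u_i-\tilde u_j)=\tfrac12 W_{ij}\bigl(a_j^2e^{u_i-u_j}+a_i^2e^{u_j-u_i}\bigr)$. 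Subtracting the untransformed term $W_{ij}\cosh(u_i-u_j)$ and the constants $-W^a_{ij}+W_{ij}$, then summing over $\tilde E$ and again collecting the $e^{u_j-u_i}$ factors into $\beta^W$, produces $\sum_{i\in V}(1-a_i^2)\beta_i^W+\sum_{(i\sim j)\in\tilde E}W_{ij}(a_ia_j-1)$ for the $\cosh$ contribution.

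Finally I would add the two contributions. The $\beta$-terms combine to $-\sum_{i\in V}(a_i^2+b_i^2-1)\beta_i^W$, the edge-constant terms combine to $\sum_{(i\sim j)\in\tilde E}W_{ij}(a_ia_j+b_ib_j-1)$, and the term $-\langle b_V,\theta^W\rangle$ stands alone; exponentiating then reproduces \eqref{eq:ratio-of-densities-rho}. I expect the bookkeeping in the $\cosh$ step to be the main obstacle: it needs the hyperbolic addition identity, careful use of the symmetry $i\leftrightarrow j$ to convert sums over unordered edges into sums over ordered pairs, and the relabeling that lets the factor $(1-a_i^2)$ be pulled out so that $\sum_{j\sim i}W_{ij}e^{u_j-u_i}=2\beta_i^W$ appears. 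By contrast, once the matrix identity $A^{W^a}(\tilde u)=A^W(u)$ is in hand, the determinant cancellation and the quadratic-form expansion are routine.
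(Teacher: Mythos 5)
Your proof is correct and follows essentially the same route as the paper's first proof of Lemma~\ref{le:ratio-of-densities}: both rest on the invariance $A^{W^a}(\tilde u)=A^W(u)$ (Lemma~\ref{le:transformed-A}), the resulting cancellation of the determinant factors, and the expansion of the shifted Gaussian quadratic form into the $\sk{b_V,\theta^W}$ term, the $b_i^2\beta_i^W$ terms, and the $W_{ij}b_ib_j$ edge constants. The only cosmetic difference is that the paper first converts the $\cosh$-product into the quadratic form $\tfrac12\sk{e^{-u}_{\tilde V},A^W(u)e^{-u}_{\tilde V}}=\tfrac12\sk{1_{\tilde V},H^W_{\tilde\beta(u)}1_{\tilde V}}$ and then evaluates $\sk{a,H^W_{\tilde\beta(u)}a}$, whereas you treat the $\cosh$ factors termwise via the hyperbolic addition formula; the two computations are identical in content.
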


This lemma is proven in Section \ref{se:two-proofs-ratio-densities}, below.

\medskip\noindent\begin{proof}[First proof of Theorem~\ref{thm:scaling}]
We abbreviate $c=(2\pi)^{-|V|}$. From \eqref{eq:def-mu-W}, we know 
$d\mu^W=c\rho^W\, d\zeta_V$. Substituting $W$ by $W^a$, this gives 
$d\mu^{W^a}=c\rho^{W^a}\, d\zeta_V$. We take now the image measure with respect
to $\s_{[a,b]}$. The following calculation uses the description 
of $\s_{[a,b]}\zeta_V$ from Lemma~\ref{le:zeta-transformed-with-s} 
and in the last step the ratio of densities given in Lemma 
\ref{le:ratio-of-densities} together with the definition 
\eqref{eq:claim-laplace-trafo} of the constant $\cL^W(a,b)$.
\begin{align}
& d(\s_{[a,b]}\mu^{W^a})
= 
c (\rho^{W^a}\circ\s^{-1}_{[a,b]})\, d(\s_{[a,b]}\zeta_V)
=
c\frac{\rho^{W^a}\circ\s^{-1}_{[a,b]}}{\rho^W}
\rho^W\prod_{i\in V}a_i\,d\zeta_V \nonumber\\
= &\frac{\rho^{W^a}\circ\s^{-1}_{[a,b]}}{\rho^W}\prod_{i\in V}a_i\, d\mu^W
= \cL^W(a,b)^{-1}e^{-\langle(a^2+b^2-1)_V,\beta^W\rangle-\langle b_V,\theta^W\rangle}
\, d\mu^W
\end{align}
This implies the claim \eqref{eq:claim-thm-image-measure}, which is 
written in \eqref{eq:claim-thm-image-measure-integral-version-new} in a 
different notation. Taking the test function $f=1$, 
\eqref{eq:L-a-b} is a special case of 
\eqref{eq:claim-thm-image-measure-integral-version-new}. 
\end{proof}

\subsection{Two proofs of Lemma~\ref{le:ratio-of-densities}}
\label{se:two-proofs-ratio-densities}

The two proofs of Lemma~\ref{le:ratio-of-densities} given in this 
subsection are based on 
two different representations of $\rho^W$. The first representation,
being based on the quadratic form associated to $A^W$,
is given in Lemma~\ref{le:expr1-rho}, while the second one, being
based on $2\times 2$ determinants, is described in Lemma 
\ref{le:rho-in-terms-of-det}, below. 

\subsubsection{First proof}
We define the matrix $H_{\tilde\beta(u)}^W\in\R^{\tilde V\times\tilde V}$ by 
\begin{align}
& (H^W_{\tilde\beta(u)})_{ij}=2\tilde\beta_i(u)\delta_{ij}-W_{ij}\quad 
\text{ for }i,j\in\tilde V,
\end{align}
Note that for all $i,j\in\tilde V$, one has 
\begin{align}
(H^W_{\tilde\beta(u)})_{ij}=& \left\{\begin{array}{ll} 
-W_{ij} & \text{ if }i\neq j, \\
2\tilde\beta_i(u)=\sum_{k\in\tilde V}W_{ik}e^{u_k-u_i}& \text{ if }i=j
\end{array}\right\}\nonumber\\
= & e^{-u_i-u_j}A_{ij}^W(u)=
(e^{-u}A^W(u)e^{-u})_{ij};
\label{eq:def-H-beta}
\end{align}
recall that the graph $\tilde G$ has no direct loops and hence 
$W_{ii}=0$ by the definition of the weights. Here and in the 
following, when calculating with matrices, we abbreviate 
$e^{\pm u}=\diag(e^{\pm u_i},i\in\tilde V)$. Thus, expressions like 
$e^{-u}s$ can be read in two equivalent ways, componentwise or 
as a matrix multiplication, both meaning the same object
$(e^{-u_i}s_i)_{i\in\tilde V}$. In contrast to this, we write 
$e^{-u}_{\tilde V}=(e^{-u_i})_{i\in\tilde V}$ for the column vector. 
We denote by $H^W_{\beta(u)}:=(H^W_{\tilde\beta(u)})_{VV}$ the restriction 
to $V\times V$, i.e.\ $(H^W_{\beta(u)})_{ij}=2\beta_i(u)\delta_{ij}-W_{ij}$ for 
$i,j\in V$, cf.\ \eqref{eq:beta-definition}.

\begin{lemma}
\label{le:expr1-rho}
For $(u,s)\in\Omega_V$, we have the relations 
\begin{align}
&2\sum_{(i\sim j)\in\tilde E}W_{ij}[\cosh(u_i-u_j)-1]
=\sk{e^{-u}_{\tilde V},A^{W}(u)e^{-u}_{\tilde V}}=\sk{1_{\tilde V},H^W_{\tilde\beta(u)} 1_{\tilde V}}, 
\label{eq:expr2-rho}\\
&\det A^W_{VV}(u)=\prod_{i\in V}e^{2u_i}\cdot \det H_{\beta(u)}^W,\quad 
\sk{s,A^{W}(u)s}=\sk{e^us,H^W_{\tilde\beta(u)}e^us}.
\label{eq:expr3-rho}
\end{align}
In particular, the density $\rho^W$ defined in \eqref{eq:def-rho} 
can be written as
\begin{align}
\label{eq:expr1-rho}
\rho^W(u,s)=& \det A^W_{VV}(u) 
e^{-\frac12 \sk{s,A^{W}(u)s}} e^{-\frac12 \sk{e^{-u}_{\tilde V},A^{W}(u)e^{-u}_{\tilde V}}}.
\end{align}
\end{lemma}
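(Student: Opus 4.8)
The plan is to verify the three displayed identities \eqref{eq:expr2-rho}--\eqref{eq:expr3-rho} by direct computation and then assemble them to recover \eqref{eq:expr1-rho}. First I would establish \eqref{eq:expr2-rho}. Starting from the left-hand side, I write $\cosh(u_i-u_j)-1 = \frac12(e^{u_i-u_j}+e^{u_j-u_i})-1$, so that summing the symmetric quantity $2W_{ij}[\cosh(u_i-u_j)-1]$ over undirected edges equals summing $W_{ij}(e^{u_i-u_j}+e^{u_j-u_i}-2)$ over all ordered pairs $(i,j)$ with $i\sim j$. The key algebraic observation is that $e^{u_i-u_j}+e^{u_j-u_i}-2 = -(e^{-u_i}-e^{-u_j})\cdot(-(e^{u_i}-e^{u_j}))$ can be reorganized using $A^W(u)$: evaluating $\sk{e^{-u}_{\tilde V},A^W(u)e^{-u}_{\tilde V}}$ with the definition \eqref{eq:def-A}, the diagonal terms contribute $\sum_i e^{-2u_i}\sum_k W_{ik}e^{u_i+u_k}=\sum_{i,k}W_{ik}e^{u_k-u_i}$ while the off-diagonal terms contribute $-\sum_{i\neq j}W_{ij}e^{u_i+u_j}e^{-u_i-u_j}=-\sum_{i\neq j}W_{ij}$, and combining these over the symmetric weight matrix reproduces exactly the edge sum. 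The final equality in \eqref{eq:expr2-rho} is then immediate from \eqref{eq:def-H-beta}, since $H^W_{\tilde\beta(u)}=e^{-u}A^W(u)e^{-u}$ gives $\sk{1_{\tilde V},H^W_{\tilde\beta(u)}1_{\tilde V}}=\sk{e^{-u}_{\tilde V},A^W(u)e^{-u}_{\tilde V}}$.

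Next I would prove the two identities in \eqref{eq:expr3-rho}. Both rest on the conjugation relation $H^W_{\tilde\beta(u)}=e^{-u}A^W(u)e^{-u}$ from \eqref{eq:def-H-beta}, restricted appropriately to $V\times V$. For the determinant, I use that $A^W_{VV}(u)=e^u_{VV}H^W_{\beta(u)}e^u_{VV}$ as matrices on $V\times V$ (here $H^W_{\beta(u)}$ is the restriction of $H^W_{\tilde\beta(u)}$, which is the same as the conjugate of the restriction because the diagonal conjugation by $e^{\pm u}$ acts entrywise); taking determinants yields $\det A^W_{VV}(u)=\det(e^u_{VV})^2\det H^W_{\beta(u)}=\prod_{i\in V}e^{2u_i}\cdot\det H^W_{\beta(u)}$. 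For the quadratic form, I substitute the same conjugation to get $\sk{s,A^W(u)s}=\sk{s,e^u H^W_{\tilde\beta(u)}e^u s}=\sk{e^u s,H^W_{\tilde\beta(u)}e^u s}$, using the symmetry of the diagonal matrix $e^u$ to move it across the inner product.

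Finally I would assemble \eqref{eq:expr1-rho}. The claim differs from the first line of the definition \eqref{eq:def-rho} only in how the edge product is written: in \eqref{eq:def-rho} the factor is $\prod_{(i\sim j)\in\tilde E}e^{-W_{ij}[\cosh(u_i-u_j)-1]}$, which by \eqref{eq:expr2-rho} equals $\exp(-\frac12\sk{e^{-u}_{\tilde V},A^W(u)e^{-u}_{\tilde V}})$. Substituting this into the first line of \eqref{eq:def-rho} gives exactly \eqref{eq:expr1-rho}. I expect the main obstacle to be purely bookkeeping: keeping careful track of which sums run over ordered versus unordered edges, ensuring the factor of $\frac12$ from $\cosh$ and from the undirected edge set are reconciled correctly, and confirming that the restriction to $V\times V$ commutes with the diagonal conjugation (which it does precisely because $e^{\pm u}$ is diagonal, so restricting and conjugating are interchangeable). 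None of these steps is conceptually deep, but the sign and index conventions in the edge sum require attention.
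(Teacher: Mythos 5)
Your proof is correct and follows essentially the same route as the paper: compute $\sk{e^{-u}_{\tilde V},A^{W}(u)e^{-u}_{\tilde V}}$ by separating diagonal and off-diagonal contributions to obtain \eqref{eq:expr2-rho}, deduce the remaining identities from the conjugation relation \eqref{eq:def-H-beta}, and substitute back into the first line of \eqref{eq:def-rho}. (Only the unused parenthetical factorization $e^{u_i-u_j}+e^{u_j-u_i}-2=(e^{-u_i}-e^{-u_j})(e^{u_i}-e^{u_j})$ carries the wrong sign --- the right-hand side equals $2-e^{u_i-u_j}-e^{u_j-u_i}$ --- but the term-by-term computation you actually perform afterwards is correct.)
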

\begin{proof}
The first equation in \eqref{eq:expr2-rho} follows directly from 
\begin{align}
\sk{e^{-u}_{\tilde V},A^W(u)e^{-u}_{\tilde V}}
=&\sum_{i,j\in\tilde V}e^{-u_i}A^W_{ij}(u)e^{-u_j}
=\sum_{i\in\tilde V}\sum_{k\in\tilde V}W_{ik}e^{u_k-u_i}
-2\sum_{(i\sim j)\in\tilde E}W_{ij}\label{eq:expr-gauss}\\
=&2\sum_{(i\sim j)\in\tilde E}W_{ij}[\cosh(u_i-u_j)-1],\nonumber
\end{align}
where the first sum on the right-hand side of \eqref{eq:expr-gauss} 
comes from the diagonal terms 
in $A^W(u)$ and the second sum from the off-diagonal terms. 
Using relation \eqref{eq:def-H-beta} between $H^W_{\tilde\beta(u)}$ 
and $A^W(u)$, the remaining claims in \eqref{eq:expr2-rho} and
\eqref{eq:expr3-rho} follow. The expression \eqref{eq:expr1-rho}
for $\rho^W$ is a consequence of formula \eqref{eq:expr2-rho}. 
\end{proof}

\begin{lemma}
\label{le:transformed-A}
The matrix $A^W$ is invariant with respect to the 
$\s$-operation in the following sense: For $[a,b]\in\G_V$,
$(u,s)\in\Omega_V$, and 
$(\tilde u,\tilde s)=\s^{-1}_{[a,b]}(u,s)=(u-\log a,s+e^{-u}b)$, 
the following holds 
\begin{align}
\label{eq:transformed-A}
A^{W^a}(\tilde u)=A^W(u), \quad\text{i.e.}\quad
A^W = A^{W^a} \circ \s^{-1}_{[a,b]}.
\end{align}
\end{lemma}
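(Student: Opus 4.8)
The plan is to prove the identity entrywise by a direct substitution, taking advantage of the fact that $A^W$ depends only on the $u$-component of $(u,s)$. By the explicit form of the inverse scaling in \eqref{eq:def-S-r-gamma-inv}, the relevant component is $\tilde u=u-\log a$, so that, reading $\log a$ and $e^{\tilde u}$ componentwise as in the Notation paragraph, one has $e^{\tilde u_i}=e^{u_i}/a_i$ for every $i\in\tilde V$. The second ingredient is the transformation rule for the weights in \eqref{eq:def-rescaled-weights}, namely $W^a_{ij}=a_ia_jW_{ij}$. The whole point is that the prefactor $a_ia_j$ appearing in $W^a$ cancels exactly against the factor $a_i^{-1}a_j^{-1}$ produced by the shift of $u$ inside the exponentials of \eqref{eq:def-A}.

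Concretely, I would first treat the off-diagonal entries. For $i\neq j$, the definition \eqref{eq:def-A} applied to the pair $(W^a,\tilde u)$ gives
\begin{align}
A^{W^a}_{ij}(\tilde u)=-a_ia_jW_{ij}\,e^{\tilde u_i+\tilde u_j}
=-a_ia_jW_{ij}\,\frac{e^{u_i+u_j}}{a_ia_j}
=-W_{ij}e^{u_i+u_j}=A^W_{ij}(u).
\end{align}
For the diagonal entries I would carry out the same cancellation inside the sum: for $i=j$,
\begin{align}
A^{W^a}_{ii}(\tilde u)=\sum_{k\in\tilde V}a_ia_kW_{ik}\,e^{\tilde u_i+\tilde u_k}
=\sum_{k\in\tilde V}a_ia_kW_{ik}\,\frac{e^{u_i+u_k}}{a_ia_k}
=\sum_{k\in\tilde V}W_{ik}e^{u_i+u_k}=A^W_{ii}(u).
\end{align}
Since the two displays cover all entries, they establish $A^{W^a}(\tilde u)=A^W(u)$, which is the first equality in \eqref{eq:transformed-A}; the equivalent operator form $A^W=A^{W^a}\circ\s^{-1}_{[a,b]}$ is then just a restatement, recalling that $\tilde u$ (and hence the dependence of $A^{W^a}$) is precisely the $u$-part of $\s^{-1}_{[a,b]}(u,s)$.

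I do not expect any genuine obstacle here: the statement is an algebraic invariance and the argument is pure bookkeeping of the canceling $a$-factors. The only points worth a sentence of care are that the $b$- and $s$-components play no role, since $A^W$ is a function of $u$ alone, and that the constraint $[a_\delta,b_\delta]=[1,0]$ defining $\G_V$ (so $a_\delta=1$) keeps the pinned vertex $\delta$ consistent but does not affect the computation. If one prefers the matrix-conjugation viewpoint, the same content can be phrased as $A^{W^a}(\tilde u)=a^{-1}A^{W}(u)a^{-1}\cdot a^{2}$ read diagonally, i.e. the diagonal similarity by $\diag(a_i)$ induced by the weight rescaling is undone by the diagonal shift $e^{\tilde u}=a^{-1}e^{u}$; I would keep the entrywise version above as the cleanest self-contained argument.
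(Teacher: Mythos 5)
Your proof is correct and follows essentially the same route as the paper: an entrywise check that the factor $a_ia_j$ in $W^a_{ij}$ cancels against $e^{\tilde u_i+\tilde u_j}=e^{u_i+u_j}/(a_ia_j)$. The only cosmetic difference is that for the diagonal entries the paper invokes the fact that the rows of $A^W(u)$ and $A^{W^a}(\tilde u)$ sum to zero, whereas you repeat the same cancellation inside the sum; both are equally immediate.
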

\begin{proof}
For $i,j\in\tilde V$ with $i\neq j$, one has 
$A^{W^a}_{ij}(\tilde u)=a_ia_jW_{ij}e^{\tilde u_i+\tilde u_j}=
W_{ij}e^{u_i+u_j}=A^W_{ij}(u)$. Since rows of both matrices $A^{W^a}(\tilde u)$
and $A^W(u)$ sum up to 0, it follows also $A^{W^a}_{ii}(\tilde u)=A^W_{ii}(u)$.
This proves the claim. 
\end{proof}

\medskip\noindent
\begin{proof}[First proof of Lemma~\ref{le:ratio-of-densities}]
Substituting \eqref{eq:transformed-A} into the representation 
\eqref{eq:expr1-rho} for $\rho^{W^a}$, we obtain 
\begin{align}
\rho^{W^a}(\s^{-1}_{[a,b]}(u,s))=\rho^{W^a}(\tilde u,\tilde s)
=& \det A^W_{VV}(u) e^{-\frac12 \sk{\tilde s,A^{W}(u)\tilde s}} 
e^{-\frac12 \sk{e^{-\tilde u}_{\tilde V},A^{W}(u)e^{-\tilde u}_{\tilde V}}}.
\end{align}
Inserting the definition of $\tilde u$ and $\tilde s$ in the exponents 
above and using \eqref{eq:def-H-beta}, the facts $b_\delta=0=s_\delta$ and 
the definition \eqref{eq:def-theta} of $\theta^W$, we obtain 
\begin{align}
\sk{\tilde s,A^{W}(u)\tilde s}
=&\sk{s,A^{W}(u)s} + \sk{b,e^{-u}A^{W}(u)e^{-u}b} 
+2\sk{b,e^{-u}A^{W}(u)s}
\nonumber\\
\label{eq:trafo-gauss}
=&\sk{s,A^{W}(u)s} + \sk{b,H_{\tilde\beta(u)}^Wb} +2\sk{b_V,\theta^W(u,s)}, \\
\label{eq:trafo-cosh}
\sk{e^{-\tilde u}_{\tilde V},A^{W}(u)e^{-\tilde u}_{\tilde V}}
= & \sk{a,e^{-u}A^{W}(u)e^{-u}a}=\sk{a,H_{\tilde\beta(u)}^Wa}.
\end{align}
Using in the second equality \eqref{eq:expr1-rho} and 
\eqref{eq:expr2-rho}, this implies 
\begin{align}
\rho^{W^a}(\s^{-1}_{[a,b]}(u,s))
=& \det A^W_{VV}(u) 
e^{-\frac12 \left(\sk{s,A^{W}(u)s} + \sk{b,H_{\tilde\beta(u)}^Wb}\right)
-\sk{b_V,\theta^W(u,s)}} 
e^{-\frac12\sk{a,H_{\tilde\beta(u)}^Wa}} \nonumber\\
= & \rho^W(u,s)
e^{-\frac12 \left(\sk{a,H_{\tilde\beta(u)}^Wa}
+\sk{b,H_{\tilde\beta(u)}^Wb}
-\sk{1_{\tilde V},H^W_{\tilde\beta(u)} 1_{\tilde V}}\right)}e^{-\sk{b_V,\theta^W(u,s)}}.
\end{align}
Since $a_\delta^2+b_\delta^2-1=0$, the first exponent in the 
last expression takes the form 
\begin{align}
& -\frac12\left(\sk{a,H_{\tilde\beta(u)}^Wa}
+\sk{b,H_{\tilde\beta(u)}^Wb}
-\sk{1_{\tilde V},H^W_{\tilde\beta(u)} 1_{\tilde V}}\right) \nonumber\\
= & \sum_{(i\sim j)\in \tilde E} W_{ij}(a_ia_j + b_ib_j -1)
-\sum_{i\in V}(a_i^2+b_i^2-1)\beta_i^W(u).
\end{align}
This proves the claim. 
\end{proof}

\subsubsection{Second proof}
We can represent the density $\rho^W$ of 
the supersymmetric sigma model as follows. 
Recall the bijection $\iota$ introduced in \eqref{eq:def-epsilon}. 

\begin{lemma}
\label{le:rho-in-terms-of-det}
For $(u,s)=\iota(v)\in\Omega_V$ with $v=[a,b]\in\G_V$, the density $\rho^W$ 
defined in \eqref{eq:def-rho} can be written as follows: 
\begin{align}
\rho^W(u,s)=\det A^W_{VV}(u) 
\exp\left(\sum_{(i\sim j)\in \tilde E} \frac{W_{ij}}{2}
\det\left(\frac{v_iv_i^t}{a_i}-\frac{v_jv_j^t}{a_j}\right)\right)
\end{align}
\end{lemma}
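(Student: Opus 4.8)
The plan is to reduce the claimed formula to a single factor per edge and then to a short $2\times 2$ determinant computation. Both the target expression and the defining formula \eqref{eq:def-rho} carry the common prefactor $\det A^W_{VV}(u)$, and the right-hand side of the lemma is the exponential of a sum over edges. Hence it suffices to match, for each edge $(i\sim j)\in\tilde E$, the single summand $\tfrac{W_{ij}}{2}\det\big(v_iv_i^t/a_i-v_jv_j^t/a_j\big)$ against the exponent $-W_{ij}\big[\cosh(u_i-u_j)-1+\tfrac12(s_i-s_j)^2e^{u_i+u_j}\big]$ appearing in the second line of \eqref{eq:def-rho}. Throughout I would use the coordinate relations $a_i=e^{-u_i}$ and $b_i=s_i$, with $v_i=[a_i,b_i]$.

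First I would compute the single-vertex block
\begin{align}
v_iv_i^t=\matr{{cc} a_i^2+b_i^2 & b_i\\ b_i & 1},\qquad P_i:=\frac{v_iv_i^t}{a_i},
\end{align}
and record the key simplification $\det P_i=(\det v_i)^2/a_i^2=1$, using $\det v_i=a_i$. This normalization is exactly what keeps the rest of the argument short. Next, for the difference of the two $2\times 2$ matrices I would invoke the polarization identity $\det(P_i-P_j)=\det P_i+\det P_j-\tr\big(P_i\,\operatorname{adj}(P_j)\big)$, valid for $2\times 2$ matrices; since $\det P_i=\det P_j=1$, this collapses the whole edge contribution to $\det(P_i-P_j)=2-\tr\big(P_i\,\operatorname{adj}(P_j)\big)$, so only one trace remains to be computed.

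Then I would evaluate that trace directly from $\operatorname{adj}(P_j)=\tfrac{1}{a_j}\matr{{cc} 1 & -b_j\\ -b_j & a_j^2+b_j^2}$, obtaining
\begin{align}
\tr\big(P_i\,\operatorname{adj}(P_j)\big)
=\frac{a_i^2+a_j^2+(b_i-b_j)^2}{a_ia_j}
=2\cosh(u_i-u_j)+(s_i-s_j)^2e^{u_i+u_j},
\end{align}
where in the last step I use $a_i/a_j+a_j/a_i=e^{u_j-u_i}+e^{u_i-u_j}=2\cosh(u_i-u_j)$ and $(a_ia_j)^{-1}=e^{u_i+u_j}$. Consequently $\det(P_i-P_j)=2-2\cosh(u_i-u_j)-(s_i-s_j)^2e^{u_i+u_j}$, and multiplying by $W_{ij}/2$ reproduces precisely the required exponent, completing the edge-wise identity and hence the lemma.

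As for the main obstacle: there is no deep difficulty here, only bookkeeping. The one thing to get right is recognizing the hyperbolic-cosine structure and handling the off-diagonal cross term $b_ib_j$ cleanly. Using $\det P_i=1$ together with the $2\times 2$ polarization identity—rather than expanding $\det(P_i-P_j)$ entry by entry—is the step that makes the appearance of $\cosh(u_i-u_j)$ transparent and keeps the computation free of cancellation.
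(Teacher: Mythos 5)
Your proof is correct and follows essentially the same route as the paper: reduce to the per-edge identity $\tfrac12\det\bigl(v_iv_i^t/a_i-v_jv_j^t/a_j\bigr)=-\bigl[\cosh(u_i-u_j)-1+\tfrac12(s_i-s_j)^2e^{u_i+u_j}\bigr]$ and verify it by a $2\times 2$ computation. The only difference is that you evaluate $\det(P_i-P_j)$ via the polarization identity $\det(A-B)=\det A+\det B-\tr\bigl(A\,\operatorname{adj}(B)\bigr)$ together with $\det P_i=1$, whereas the paper expands the determinant directly; your identity is in fact the paper's own Lemma~\ref{le:diff-det-2} in adjugate form, which it deploys only later in the second proof of Lemma~\ref{le:ratio-of-densities}.
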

\begin{proof}
Let $(u,s)=\iota(v)\in\Omega_V$. It suffices to prove for all 
$(i\sim j)\in \tilde E$ 
\begin{align}
\label{eq:claim-edge-wise}
-\left[\cosh(u_i-u_j)-1+\frac12(s_i-s_j)^2e^{u_i+u_j}\right]
=\frac12\det\left(\frac{v_iv_i^t}{a_i}-\frac{v_jv_j^t}{a_j}\right). 
\end{align}
For $i\in\tilde V$, $v_i=[e^{-u_i},s_i]=[a_i,b_i]$, we calculate 
\begin{align} 
\frac{v_iv_i^t}{a_i}
=e^{u_i}\matr{{cc} e^{-u_i} & s_i \\ 0 & 1}\matr{{cc} e^{-u_i} & 0\\ s_i & 1}
=\matr{{cc} e^{-u_i} +s_i^2e^{u_i} & s_ie^{u_i}\\ s_ie^{u_i} & e^{u_i}}.
\end{align}
Consequently, the claim \eqref{eq:claim-edge-wise} follows from 
\begin{align} 
\det\left(\frac{v_iv_i^t}{a_i}-\frac{v_jv_j^t}{a_j}\right)
=& (e^{-u_i}-e^{-u_j}+s_i^2e^{u_i}-s_j^2e^{u_j})(e^{u_i}-e^{u_j})
-(s_ie^{u_i} -s_je^{u_j})^2\nonumber\\
=& 2-2\cosh(u_i-u_j)-(s_i-s_j)^2e^{u_i+u_j}. 
\end{align}
\end{proof}

To deal with determinants of differences of $2\times 2$-matrices, 
we need the following elementary lemma, which is motivated 
by the linear algebra of spinors.  Let 
\begin{align}
\eps=\matr{{cc}0 & -1 \\ 1 & 0}. 
\end{align}

\begin{lemma}
\label{le:diff-det-2}
For all $v_i=[a_i,b_i],v_j=[a_j,b_j]\in\G$, one has 
\begin{align}
\label{eq:diff-det-2}
\det\left(\frac{v_iv_i^t}{a_i}-\frac{v_jv_j^t}{a_j}\right)
= 2-\frac{\|v_i^t\eps v_j\|^2}{a_ia_j}, 
\end{align}
where $\|\cdot\|$ means the euclidean norm of $2\times 2$-matrices.
\end{lemma}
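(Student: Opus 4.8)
The plan is to exploit that, on $2\times 2$ matrices, the determinant is a quadratic form and to polarize it. For matrices $X,Y$ set $c(X,Y):=\det(X+Y)-\det X-\det Y$; an elementary expansion gives $c(X,Y)=\tr(X\operatorname{adj}(Y))$, where $\operatorname{adj}$ denotes the $2\times 2$ adjugate. Since $\operatorname{adj}$ is linear and $\det(-Y)=\det Y$ in dimension two, this yields the polarization identity $\det(X-Y)=\det X+\det Y-c(X,Y)$. The first thing I would record is that each matrix $M_i:=v_iv_i^t/a_i$ has determinant one: from $v_i=[a_i,b_i]$ one has $\det v_i=a_i$, hence $\det(v_iv_i^t)=a_i^2$ and $\det M_i=1$. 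Since $\det M_i=\det M_j=1$, polarization gives $\det(M_i-M_j)=2-c(M_i,M_j)$, so the whole claim collapses to the single identity $c(M_i,M_j)=\|v_i^t\eps v_j\|^2/(a_ia_j)$.

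To establish this remaining identity I would use the spinor identity $\eps A\eps=-\operatorname{adj}(A)$, valid for every symmetric $2\times 2$ matrix $A$; it follows from the general relation $\eps A^t\eps^{-1}=\operatorname{adj}(A)$ together with $\eps^{-1}=-\eps$. Writing $P_i=v_iv_i^t$ and using $\|X\|^2=\tr(X^tX)$ together with $\eps^t=-\eps$, the Frobenius norm rewrites, after one cyclic permutation inside the trace, as
\begin{align}
\|v_i^t\eps v_j\|^2=-\tr(\eps P_i\eps P_j)=\tr(\operatorname{adj}(P_i)P_j)=c(P_i,P_j).
\end{align}
Pulling the scalars $a_i,a_j$ out by bilinearity of $c$ turns this into $c(M_i,M_j)=\|v_i^t\eps v_j\|^2/(a_ia_j)$, which is exactly what is needed. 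The only delicate point is the sign bookkeeping with $\eps$: one must consistently track $\eps^t=-\eps$ and $\eps^2=-I$ through the trace manipulation, and this is where a careless computation would slip.

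Alternatively, and completely self-containedly, one can verify the identity by direct computation. Expanding gives $v_i^t\eps v_j=\matr{{cc}0 & -a_i\\ a_j & b_j-b_i}$, whence $\|v_i^t\eps v_j\|^2=a_i^2+a_j^2+(b_i-b_j)^2$. On the other side, the explicit entries of $v_iv_i^t/a_i$ already computed in the proof of Lemma~\ref{le:rho-in-terms-of-det} give, after parametrizing $a_i=e^{-u_i}$ and $b_i=s_i$, the value $\det(v_iv_i^t/a_i-v_jv_j^t/a_j)=2-(a_i^2+a_j^2)/(a_ia_j)-(b_i-b_j)^2/(a_ia_j)$, so the two sides agree upon dividing the norm by $a_ia_j$. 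This brute-force route presents no obstacle beyond routine $2\times 2$ algebra, confirming that the lemma is elementary; the spinor formulation above is merely the conceptually cleaner packaging.
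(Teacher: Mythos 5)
Your proof is correct and follows essentially the same route as the paper: your polarization identity $\det(X-Y)=\det X+\det Y-\tr(X\operatorname{adj}(Y))$ is the paper's identity $\det(A-B)=\det A+\det B+\tr(A\eps B^t\eps)$ in disguise, since $\operatorname{adj}(B)=-\eps B^t\eps$, and the subsequent cyclic-trace manipulation identifying the cross term with $-\|v_i^t\eps v_j\|^2/(a_ia_j)$ is the same computation. The direct $2\times 2$ verification you append as a fallback is also correct and consistent with the explicit entries computed elsewhere in the paper.
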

\begin{proof}
The bilinear form 
$\tr(A\eps B^t\eps)$ on $2\times 2$-matrices $A,B\in\R^{2\times 2}$
is symmetric. Indeed, using $\eps^t=-\eps$, 
\begin{align}
\tr(B\eps A^t\eps)=\tr((B\eps A^t\eps)^t)=\tr(\eps A\eps B^t)
=\tr(A\eps B^t\eps). 
\end{align}
The corresponding quadratic form is given by 
\begin{align}
\tr(A\eps A^t\eps)=-2\det A. 
\end{align}
It follows 
\begin{align}
\det(A-B)=\det A+\det B+\tr(A\eps B^t\eps). 
\end{align}
Taking now $A=a_i^{-1}v_iv_i^t$ and $B=a_j^{-1}v_jv_j^t=B^t$, which fulfill
$\det A=a_i^{-2}(\det v_i)^2=1=\det B$, we obtain 
\begin{align}
\det\left(\frac{v_iv_i^t}{a_i}-\frac{v_jv_j^t}{a_j}\right)
=&\det (A-B)=\det A+\det B+\tr(A\eps B^t\eps)\nonumber\\
= & 2+\tr(A\eps B\eps)
= 2+\frac{1}{a_ia_j}\tr(v_iv_i^t\eps v_jv_j^t\eps).
\label{eq:det-diff}
\end{align}
Using $\eps^t=-\eps$ again, we rewrite the last trace as follows: 
\begin{align}
\tr(v_iv_i^t\eps v_jv_j^t\eps)
=\tr(v_i^t\eps v_jv_j^t\eps v_i)
=-\tr(v_i^t\eps v_j(v_i^t\eps v_j)^t)
=-\|v_i^t\eps v_j\|^2.
\end{align}
Substituting this into \eqref{eq:det-diff}, the claim 
\eqref{eq:diff-det-2} follows. 
\end{proof}

\medskip\noindent
\begin{proof}[Second proof of Lemma~\ref{le:ratio-of-densities}]
We take $v=[a,b]$, $v'=[a',b']$, and $v''=[a'',b'']$ in $\G_V$
with $v''=v'\cdot v$ and set  
$(u,s)=\iota(v')$, $(\tilde u,\tilde s)=\iota(v'')$. 
By \eqref{eq:def-S-r-gamma-inv}, we have $\s^{-1}_{[a,b]}(u,s)=\iota(v'')$. 
Since $A^{W^a}(\tilde u)=A^W(u)$ as stated in Lemma~\ref{le:transformed-A}, 
it follows 
\begin{align}
\det A^{W^a}_{VV}(\tilde u)=\det A^W_{VV}(u).
\label{eq:det-equal}
\end{align} 
Using Lemma~\ref{le:rho-in-terms-of-det} and this fact, 
we obtain
\begin{align}
& \frac{\rho^{W^a}(\s^{-1}_{[a,b]}(u,s))}{\rho^W(u,s)}
= \frac{\rho^{W^a}(\iota([a'',b''])}{\rho^W(\iota([a',b'])}
\nonumber\\
& =\exp\left(\sum_{(i\sim j)\in \tilde E} \frac{W_{ij}}{2}
\left[a_ia_j\det\left(\frac{v_i''(v_i'')^t}{a_i''}
-\frac{v_j''(v_j'')^t}{a_j''}\right)
-\det\left(\frac{v_i'(v_i')^t}{a_i'}
-\frac{v_j'(v_j')^t}{a_j'}\right)\right]\right)
\label{eq:ratio-densities1}
\end{align}
We apply Lemma~\ref{le:diff-det-2} to $v'=[a',b']$ and $v''=[a'',b'']$
as follows, using $a_i''=a_ia_i'$ for $i\in\tilde V$:
\begin{align}
\label{eq:diff-det-3}
& a_ia_j\det\left(\frac{v_i''(v_i'')^t}{a_i''}
-\frac{v_j''(v_j'')^t}{a_j''}\right)
-\det\left(\frac{v_i'(v_i')^t}{a_i'}
-\frac{v_j'(v_j')^t}{a_j'}\right)\nonumber\\
=& 2a_ia_j-\frac{a_ia_j}{a_i''a_j''}\|(v_i'')^t\eps v_j''\|^2
-2+\frac{\|(v_i')^t\eps v_j'\|^2}{a_i'a_j'}\nonumber\\
=& 2(a_ia_j-1)+\frac{1}{a_i'a_j'}\left( \|(v_i')^t\eps v_j'\|^2
-\|(v_i'')^t\eps v_j''\|^2\right). 
\end{align}
Note that 
\begin{align}
& (v_i')^t\eps v_j'=\matr{{cc} 0 & - a_i' \\
a_j' & b_j'-b_i'}\quad\text{and}\quad \\
& (v_i'')^t\eps v_j''=\matr{{cc} 0 & - a_i'' \\
a_j'' & b_j''-b_i''}
=\matr{{cc} 0 & - a_ia_i' \\
a_ja_j' & b_j'-b_i'+a_j'b_j-a_i'b_i}
\end{align}
We calculate the last parenthesis in \eqref{eq:diff-det-3}, writing $\sk{\cdot,\cdot}$ for the euclidean
scalar product of matrices:
\begin{align}
& \|(v_i')^t\eps v_j'\|^2-\|(v_i'')^t\eps v_j''\|^2
=\sk{(v_i')^t\eps v_j'+(v_i'')^t\eps v_j'',(v_i')^t\eps v_j'-(v_i'')^t\eps v_j''}
\nonumber\\
& =\sk{
\begin{pmatrix}0 & -a_i'(1+a_i) \\
a_j'(1+a_j) & -2(b_i'-b_j')-(a_i'b_i-a_j'b_j)
\end{pmatrix},
\begin{pmatrix}0 & -a_i'(1-a_i) \\
a_j'(1-a_j) & a_i'b_i-a_j'b_j
\end{pmatrix}} 
\nonumber\\
& = -a_i'a_j'\left(
\frac{a_i'}{a_j'}(a_i^2-1)+\frac{a_j'}{a_i'}(a_j^2-1)
+2(b_i'-b_j')\left(\frac{b_i}{a_j'}-\frac{b_j}{a_i'}\right)
+a_i'a_j'\left(\frac{b_i}{a_j'}-\frac{b_j}{a_i'}\right)^2
\right). 
\end{align}
This yields 
\begin{align}
\text{l.h.s.\ in }\eqref{eq:diff-det-3}
= & 2(a_ia_j+b_ib_j-1)
-\left(\frac{a_i'}{a_j'}(a_i^2+b_i^2-1)
+\frac{a_j'}{a_i'}(a_j^2+b_j^2-1)\right)\nonumber\\
& -2\frac{b_i}{a_j'}(b_i'-b_j') -2\frac{b_j}{a_i'}(b_j'-b_i')
. 
\end{align}
Multiplying this with $W_{ij}/2$, summing the result over 
$(i\sim j)\in\tilde E$, and using the symmetry $W_{ij}=W_{ji}$, 
we obtain 
\begin{align}
& \sum_{(i\sim j)\in\tilde E}\frac{W_{ij}}{2}\left[
a_ia_j\det\left(\frac{v_i''(v_i'')^t}{a_i''}
-\frac{v_j''(v_j'')^t}{a_j''}\right)
-\det\left(\frac{v_i'(v_i')^t}{a_i'}
-\frac{v_j'(v_j')^t}{a_j'}\right)\right]\nonumber\\
= & \sum_{(i\sim j)\in\tilde E}W_{ij}\left[
a_ia_j+b_ib_j-1\right]
-\sum_{i,j\in \tilde V}W_{ij}\left[
\frac12\frac{a_i'}{a_j'}(a_i^2+b_i^2-1)+
\frac{b_i}{a_j'}(b_i'-b_j')
\right]
\label{eq:diff-det-4}
\end{align}
Next, we rewrite the definitions \eqref{eq:beta-definition} 
of $\beta^W_i$ and \eqref{eq:theta-componentwise} of $\theta^W_i$, 
$i\in V$, in the following form, using $[a',b']=[e^{-u},s]$: 
\begin{align}
& \beta_i^W =\frac12 \sum_{j\in \tilde V} W_{ij} e^{u_j-u_i}
=\frac12 \sum_{j\in \tilde V} W_{ij} \frac{a_i'}{a_j'}, \\
& \theta^W_i=\sum_{j\in\tilde V}W_{ij}e^{u_j}(s_i-s_j)
=\sum_{j\in\tilde V}W_{ij}\frac{1}{a_j'}(b_i'-b_j'). 
\end{align}
Since $a_\delta^2+b_\delta^2-1=0$, $b_\delta=0$, we obtain 
\begin{align}
\text{l.h.s.\ in }\eqref{eq:diff-det-4}
= & \sum_{(i\sim j)\in\tilde E}W_{ij}\left[
a_ia_j+b_ib_j-1\right]
-\sum_{i\in V}\left[(a_i^2+b_i^2-1)\beta_i^W
+b_i\theta_i^W\right],
\end{align}
Substituting this into \eqref{eq:ratio-densities1}, the claim 
\eqref{eq:ratio-of-densities-rho} follows.
\end{proof}

\subsection{Proof of Theorem~\ref{thm:scaling} by conditioning}

The following lemma describes the conditional distribution of $\theta^W$ 
given $\beta^W$. 

\begin{lemma}
\label{le:distribution-theta-given-u}
Conditioned on $\beta^W$, the random vector $\theta^W\in\R^V$ is normally 
distributed with mean $0$ and covariance matrix 
\begin{align}
\label{eq:relation-H-beta-A}
H^W_{\beta(u)}=e^{-u}_{VV}A^W_{VV}(u)e^{-u}_{VV}. 
\end{align} 
\end{lemma}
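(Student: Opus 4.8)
The plan is to condition on the $u$-variables instead of directly on $\beta^W$, exploiting that the two carry the same information. Indeed, $\beta^W$ is a deterministic function of $u$ by its definition \eqref{eq:beta-definition}, while conversely $u$ is a function of $\beta^W$ via the map $f_V^W$ of \eqref{eq:u-as-fn-of-beta}. Hence $\sigma(\beta^W)=\sigma(u)$, and conditioning on $\beta^W$ coincides with conditioning on $u$; it therefore suffices to determine the conditional law of $\theta^W$ given $u$.

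I would then read off the conditional law of $s_V$ given $u$ directly from the density. In the product $\rho^W(u,s)\prod_{i\in V}\frac{1}{2\pi}\zeta(du_i\,ds_i)$, using $s_\delta=0$ so that $\sk{s,A^W(u)s}=\sk{s_V,A^W_{VV}(u)s_V}$, every factor except $e^{-\frac12\sk{s_V,A^W_{VV}(u)s_V}}$ depends on $u$ alone. Consequently the conditional density of $s_V$ given $u$ with respect to $\prod_{i\in V}ds_i$ is proportional to $e^{-\frac12\sk{s_V,A^W_{VV}(u)s_V}}$, so that, conditionally on $u$, the vector $s_V$ is a centered Gaussian with precision matrix $A^W_{VV}(u)$ and covariance $(A^W_{VV}(u))^{-1}$. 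Here one uses that $A^W_{VV}(u)$ is positive definite, being the reduced weighted Laplacian of the connected graph $\tilde G$ with edge weights $W_{ij}e^{u_i+u_j}$; in particular $\det A^W_{VV}(u)>0$ and the Gaussian is non-degenerate.

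Finally, since $\theta^W=e^{-u}_{VV}A^W_{VV}(u)s_V$ by \eqref{eq:def-theta} is a fixed linear image of $s_V$, it is conditionally Gaussian with mean $0$, and its conditional covariance is
\begin{align}
e^{-u}_{VV}A^W_{VV}(u)\,(A^W_{VV}(u))^{-1}\,\big(e^{-u}_{VV}A^W_{VV}(u)\big)^t
=e^{-u}_{VV}A^W_{VV}(u)e^{-u}_{VV},
\end{align}
where I used that $A^W_{VV}(u)$ and $e^{-u}_{VV}$ are symmetric. By \eqref{eq:relation-H-beta-A} the right-hand side equals $H^W_{\beta(u)}$, which proves the claim. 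The matrix computation is routine; the only genuinely conceptual steps are the identification $\sigma(\beta^W)=\sigma(u)$, which licenses conditioning on $u$, and the positive-definiteness of $A^W_{VV}(u)$ needed to make the conditional Gaussian well defined.
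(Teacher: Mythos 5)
Your proposal is correct and follows essentially the same route as the paper: identify conditioning on $\beta^W$ with conditioning on $u$, read off from the density that $s_V$ is conditionally centered Gaussian with covariance $A^W_{VV}(u)^{-1}$, and push forward through the linear map $s_V\mapsto e^{-u}_{VV}A^W_{VV}(u)s_V$ to get the covariance $e^{-u}_{VV}A^W_{VV}(u)e^{-u}_{VV}=H^W_{\beta(u)}$. Your added remarks on $\sigma(\beta^W)=\sigma(u)$ and positive definiteness of $A^W_{VV}(u)$ are correct elaborations of points the paper leaves implicit.
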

\begin{proof}
By definition, conditioned on $u$, the vector $s_V$ is centered Gaussian with 
covariance matrix $A^{-1}$, where $A:=A_{VV}^W(u)$. Since $u$ is a 
function of $\beta^W$ by Lemma 2.3 of \cite{disertori-merkl-rolles2017}, 
we have conditioned on $\beta^W$ that $\theta^W=e^{-u}_{VV}As_V$ 
is also centered Gaussian with covariance matrix 
$(e^{-u}_{VV}A) A^{-1} (e^{-u}_{VV}A)^t=e^{-u}_{VV}Ae^{-u}_{VV}$. 
The representation \eqref{eq:relation-H-beta-A} follows from 
\eqref{eq:def-H-beta}.
\end{proof}

Our second proof of Theorem~\ref{thm:scaling} uses the known 
transformation behavior of $\mu^{W^a}(du\, ds)$ with respect to 
$\s_{[a,0]}$ from \cite{disertori-merkl-rolles2017} and the fact 
that conditionally on $u$, the $s$-variables are jointly Gaussian. 

\medskip\noindent
\begin{proof}[Proof of Theorem~\ref{thm:scaling} by conditioning]
To prove \eqref{eq:claim-thm-image-measure-integral-version-new}, it 
suffices to consider test functions of the form 
$f(u,s)=g(u)h(s)$ with measurable functions $g,h:\R^{\tilde V}\to\R^+_0$. 
We calculate
\begin{align}
\E_{\mu^{W^a}}\left[ f \circ \s_{[a,b]}\right]
=& \E_{\mu^{W^a}}\left[ g(u+\log a)h(s-e^{-(u+\log a)}b)\right]
\end{align}
The behavior of the supersymmetric sigma model $\mu^{W^a}$ with 
rescaled weights with respect to the shift $u\mapsto u+\log a$ in the $u$ 
variables was studied in \cite{disertori-merkl-rolles2017}. Using 
Theorem 3.1 of that paper with $\lambda=a^2-1$ yields 
\begin{align}
&\E_{\mu^{W^a}}\left[ g(u+\log a)h(s-e^{-(u+\log a)}b)\right]
=\cL^W(a,0)^{-1}\E_{\mu^W}\left[ g(u)h(s-e^{-u}b)e^{-\sk{(a^2-1)_V,\beta^W(u)}}\right]
\nonumber\\
& = \cL^W(a,0)^{-1}
\E_{\mu^W}\left[ g(u)\E_{\mu^W}[h(s-e^{-u}b)|u]e^{-\sk{(a^2-1)_V,\beta^W(u)}}\right]
\label{eq:large-expectation}
\end{align}
with the constant $\cL^W(a,0)$ given in \eqref{eq:claim-laplace-trafo}; 
recall that $\beta^W$ is a function of $u$. 
By the definition of the supersymmetric sigma model, cf.\  
\eqref{eq:def-mu-W} and \eqref{eq:def-rho}, conditioned on $u$ the vector
$s_V$ is centered Gaussian with covariance matrix $A^W_{VV}(u)^{-1}$
and $s_\delta=0$. Consequently, abbreviating 
$c=(2\pi)^{-|V|/2}\sqrt{\det A^W_{VV}(u)}$ and 
$\sigma(ds)=\delta_0(ds_\delta)\prod_{i\in V}ds_i$, 
the conditional expectation in \eqref{eq:large-expectation} is given by 
\begin{align}
& \E_{\mu^W}[h(s-e^{-u}b)|u]
=  c\int_{\R^{\tilde V}}h(s-e^{-u}b)
e^{-\frac12\sk{s,A^W(u)s}}\,\sigma(ds) \nonumber\\
= & c\int_{\R^{\tilde V}}h(s)
e^{-\frac12\sk{s+e^{-u}b,A^W(u)(s+e^{-u}b)}}\, 
\sigma(ds)\nonumber\\
= & c\int_{\R^{\tilde V}}h(s)
e^{-\frac12\sk{s,A^W(u)s}-\frac12\sk{e^{-u}b,A^W(u)e^{-u}b}-\sk{e^{-u}b,A^W(u)s}}\, \sigma(ds). 
\label{eq:cond-expect-of-h-given-u}
\end{align}
Using $b_\delta=0$ and \eqref{eq:def-H-beta}, we obtain 
\begin{align}
\label{eq:scalar-product1}
\sk{e^{-u}b,A^W(u)e^{-u}b}
=\sk{b_V,H^W_{\beta(u)}b_V}
=2\sk{b^2_V,\beta^W(u)}-\sum_{i,j\in V}W_{ij}b_ib_j. 
\end{align}
Similarly, using the definition \eqref{eq:def-theta}  of $\theta^W$, we obtain 
\begin{align}
\label{eq:scalar-product2}
\sk{e^{-u}b,A^W(u)s}
=\sk{b_V,e^{-u}_{VV}A^W_{VV}(u)s_V}
=\sk{b_V,\theta^W(u,s)}. 
\end{align}
Inserting \eqref{eq:scalar-product1} and \eqref{eq:scalar-product2} into 
\eqref{eq:cond-expect-of-h-given-u} yields
\begin{align}
\E_{\mu^W}[h(s-e^{-u}b)|u]
= & \prod_{i,j\in V}e^{\frac12W_{ij}b_ib_j} \cdot e^{-\sk{b^2_V,\beta^W(u)}}
c\int_{\R^{\tilde V}}h(s)
e^{-\frac12\sk{s,A^W(u)s}-\sk{b_V,\theta^W(u,s)}}\, \sigma(ds) \nonumber\\
= & \cL^W(a,0)\cL^W(a,b)^{-1} \cdot e^{-\sk{b^2_V,\beta^W(u)}}
\E_{\mu^W}\left[\left. h(s)e^{-\sk{b_V,\theta^W(u,s)}}
\right|u\right].
\end{align}
Inserting the above in \eqref{eq:large-expectation} yields the claim 
\eqref{eq:claim-thm-image-measure-integral-version-new}. 
Equality \eqref{eq:L-a-b} follows from 
\eqref{eq:claim-thm-image-measure-integral-version-new} applied
to the function $f(u,s)=1$. 
\end{proof}

\subsection{Martingales}
\label{subsec:martin}

\medskip\noindent
\begin{proof}[Proof of Kolmogorov consistency (Lemma~\ref{le:consistency})]
By Theorem~\ref{thm:scaling}, one has 
\begin{align}
\label{eq:L-n}
\cL^W_n(a_{V_n},b_{V_n})
= & \prod_{(i\sim j)\in \tilde E_n} e^{-W_{ij}^{(n)}(a_ia_j+b_ib_j-1)}
\cdot \prod_{j\in V_n} \frac{1}{a_j},\\
\label{eq:L-n+1}
\cL^W_{n+1}(a,b)
= & \prod_{(i\sim j)\in \tilde E_{n+1}} e^{-W_{ij}^{(n+1)}(a_ia_j+b_ib_j-1)}
\cdot \prod_{j\in V_{n+1}} \frac{1}{a_j}.
\end{align}
Since $a_j=1$ for $j\in V_{n+1}\setminus V_n$, one has 
\begin{align}
\prod_{j\in V_n} \frac{1}{a_j}=\prod_{j\in V_{n+1}} \frac{1}{a_j}.
\end{align}
Consider $(i\sim j)\in\tilde E_{n+1}$. \\
\emph{Case $i,j\in V_n$:} Then $(i\sim j)\in\tilde E_n$ and 
$W_{ij}^{(n)}=W_{ij}^{(n+1)}$. Consequently,
$W_{ij}^{(n)}(a_ia_j+b_ib_j-1)=W_{ij}^{(n+1)}(a_ia_j+b_ib_j-1).$\\
{\it Case $i,j\in\tilde V_{n+1}\setminus V_n$:} Then
$[a_i,b_i]=[1,0]=[a_j,b_j]$ and hence $a_ia_j+b_ib_j-1=0$. \\
{\it Case $i\in V_n$ and $j\in\tilde V_{n+1}\setminus V_n$:} 
Then $[a_j,b_j]=[1,0]$. For the given $i\in V_n$, we calculate 
\begin{align}
&\sum_{\substack{j\in\tilde V_{n+1}\setminus V_n:\\ (i\sim j)\in\tilde E_{n+1}}}
W_{ij}^{(n+1)}(a_ia_j+b_ib_j-1)
= \left[ W_{i\delta_{n+1}}^{(n+1)} + \sum_{j\in V_{n+1}\setminus V_n} W_{ij} \right] 
(a_i-1)\\
&= \sum_{j\in V_\infty\setminus V_n} W_{ij} (a_i-1)
= W_{i\delta_n}^{(n)}(a_i-1)
= W_{i\delta_n}^{(n)}(a_ia_{\delta_n}+b_ib_{\delta_n}-1).\nonumber
\end{align}
We conclude that the products over edge sets in \eqref{eq:L-n} and \eqref{eq:L-n+1}
agree. The claim \eqref{eq:Ln-Ln+1equal} follows. This identity holds in particular for 
$(a^2+b^2-1,b)$ in a neighborhood of the origin. As a consequence, by 
analytic continuation, the characteristic function of 
$(\beta^{V_n},\theta^{V_n})$ with respect to $\mu^W_n$ agrees with 
the characteristic function of $(\beta^{V_{n+1}},\theta^{V_{n+1}})|_{V_n}$ with 
respect to $\mu^W_{n+1}$. The claim follows. 
\end{proof}

\medskip\noindent
\begin{proof}[Proof of Theorem~\ref{thm:generating-mg} 
(Generating martingale)]
By the definitions \eqref{eq:def-u-n} and \eqref{eq:def-s-n} of $u^{(n)}$ and 
$s^{(n)}$, it follows that $M^{(n)}_\alpha$ is $\F_n$-measurable. 
For $[a,b]\in\G_{V_{n+1}}$ with $[a_i,b_i]=[1,0]$ for 
all $i\in \tilde V_{n+1}\setminus V_n$, we show 
\begin{align}
\label{eq:expectation-mg2}
\E_{\mu^W_\infty}\Big[ M^{(n+1)}_\alpha 
\prod_{j\in V_n} e^{-(a_j^2+b_j^2-1){\pmb{\beta}_j}-b_j{\pmb{\theta}_j}}\Big]
=\E_{\mu^W_\infty}\Big[ M^{(n)}_\alpha \prod_{j\in V_n} e^{-(a_j^2+b_j^2-1){\pmb{\beta}_j}-b_j{\pmb{\theta}_j}} \Big].
\end{align}
Note that for $j\in V_n$, one has $a_j>0$ and $b_j\in\R$. 
So in particular, we prove the identity \eqref{eq:expectation-mg2}
for $a_j^2+b_j^2-1$ and $b_j$ belonging to a neighborhood of the origin, 
which implies the martingale property for $M^{(n)}_\alpha$. 

We rewrite the claim in terms of expectations with respect to the supersymmetric
sigma model on finite graphs. Let 
\begin{align}
\tilde M^{(n)}_\alpha:\Omega_{V_n}\to\R, \quad 
(u,s)\mapsto e^{\sk{\alpha^{(n)},e^u(1+is)}}.
\end{align}
Using the definition of the variables $\pmb{\beta}$ and $\pmb{\theta}$, 
the identity \eqref{eq:expectation-mg2} is equivalent to 
\begin{align}
\label{eq:expectation-mg3}
& \E_{\mu^W_{n+1}}\Big[ \tilde M^{(n+1)}_\alpha \prod_{j\in V_n} 
e^{-(a_j^2+b_j^2-1)\beta_j^{V_{n+1}}(u)-b_j\theta_j^{V_{n+1}}(u,s)} \Big] \nonumber\\
= & \E_{\mu^W_n}\Big[ \tilde M^{(n)}_\alpha
\prod_{j\in V_n} e^{-(a_j^2+b_j^2-1)\beta_j^{V_n}(u)-b_j\theta_j^{V_n}(u,s)} \Big].
\end{align}
Since $a_j^2+b_j^2-1=0=b_j$ for $j\in\tilde V_{n+1}\setminus V_n$, 
we rewrite the left-hand side of \eqref{eq:expectation-mg3} 
using Theorem~\ref{thm:scaling} as follows: 
\begin{align}
\lhs\eqref{eq:expectation-mg3} 
= & \E_{\mu^W_{n+1}}\Big[ \tilde M^{(n+1)}_\alpha \prod_{j\in V_{n+1}} 
e^{-(a_j^2+b_j^2-1)\beta_j^{V_{n+1}}(u)-b_j\theta_j^{V_{n+1}}(u,s)} \Big] \cr
=& \cL^W_{n+1}(a,b)
\E_{\mu_{n+1}^{W^a}}\left[ \tilde M^{(n+1)}_\alpha\circ \s_{[a,b]} \right],
\label{eq:intermediate-step-mg}
\end{align}
where the last expectation is taken with respect to the supersymmetric sigma 
model on the graph $\tilde G_{n+1}$ with the rescaled weights 
$a_ia_jW_{ij}^{(n+1)}$. We calculate
\begin{align}
 \tilde M^{(n+1)}_\alpha\circ \s_{[a,b]} 
= &\exp\left(\sk{\alpha^{(n+1)},e^{u+\log a}(1+i(s-e^{-u-\log a}b))}\right)
\nonumber \\
= & e^{\sk{a\alpha^{(n+1)},e^u(1+is)}}e^{-\sk{\alpha^{(n+1)},ib}}.
\label{eq:M-n-circ-S}
\end{align}
Note that $\sk{\alpha^{(n+1)},ib}$ does not depend on $u$ or $s$. Consequently, 
inserting the last expression into \eqref{eq:intermediate-step-mg}, we obtain
\begin{align}
\label{eq:lhs-exp-mg1}
\lhs\eqref{eq:expectation-mg3} 
=& \cL^W_{n+1}(a,b) e^{-\sk{\alpha^{(n+1)},ib}}
\E_{\mu_{n+1}^{W^a}}\left[ e^{\sk{a\alpha^{(n+1)},e^u(1+is)}} \right].
\end{align}
By Corollary 5.3 in \cite{disertori-merkl-rolles2017}, 
\begin{align}
\E_{\mu_{n+1}^{W^a}}\left[ e^{\sk{a\alpha^{(n+1)},e^u(1+is)}} \right]
= & e^{\sk{a\alpha^{(n+1)},1_{\tilde V}}} 
=e^{\sk{\alpha^{(n+1)},a}} .
\end{align}
We conclude
\begin{align}
\label{eq:lhs-exp-mg2}
\lhs\eqref{eq:expectation-mg3} 
= \cL^W_{n+1}(a,b) e^{\sk{\alpha^{(n+1)},a-ib}}.
\end{align}
The right-hand side of \eqref{eq:expectation-mg3} 
can be obtained from the last expression by replacing $n+1$ by $n$. 
Thus, the claim \eqref{eq:expectation-mg3} can be written as follows 
\begin{align}
\label{eq:final-equivalent-claim}
\cL^W_{n+1}(a,b) e^{\sk{\alpha^{(n+1)},a-ib}}
= & \cL^W_n(a_{V_n},b_{V_n}) e^{\sk{\alpha^{(n)},a-ib}}.
\end{align}
By Lemma~\ref{le:consistency}, $\cL^W_n(a_{V_n},b_{V_n})=\cL^W_{n+1}(a,b)$. 
Furthermore, using $[a_{\delta_{n+1}},b_{\delta_{n+1}}]=[1,0]$, we obtain
\begin{align}
\sk{\alpha^{(n+1)},a-ib}
= &\sum_{j\in V_{n+1}}\alpha_j(a_j-ib_j)+\alpha^{(n+1)}_{\delta_{n+1}}
= \sum_{j\in V_{n+1}}\alpha_j(a_j-ib_j)+ \sum_{j\in V_\infty\setminus V_{n+1}}\alpha_j 
\nonumber\\
=& \sum_{j\in V_n}\alpha_j(a_j-ib_j)+ \sum_{j\in V_\infty\setminus V_n}\alpha_j
=\sk{\alpha^{(n)},a-ib}.
\label{eq:sk-1}
\end{align}
This shows that \eqref{eq:final-equivalent-claim}
holds and finishes the proof of the martingale property. 
\end{proof}

\medskip\noindent
\begin{proof}[Proof of Corollary~\ref{eq:cor-derivatives-are-mgs}]
By Theorem~\ref{thm:generating-mg}, $(M^{(n)}_\alpha)_{n\in\N}$ is a
 martingale for all $\alpha\in (-\infty,0]^{(V_\infty)}$. The martingale 
property is equivalent to 
\begin{align}
\label{eq:expectation-mg4}
\E_{\mu^W_\infty}[ M^{(n+1)}_\alpha 1_A]=\E_{\mu^W_\infty}[ M^{(n)}_\alpha 1_A]
\end{align}
for all $n\in\N_0$ and all events $A\in\F_n$. 
Taking left-sided derivatives at $\alpha=0$, we get 
\begin{align}
& \partial_{\alpha_{j_1}}\ldots\partial_{\alpha_{j_k}}M^{(n)}_\alpha
=\partial_{\alpha_{j_1}}\ldots\partial_{\alpha_{j_k}}e^{\sk{\alpha^{(n)},e^{u^{(n)}}(1+is^{(n)})}}
=M_{j_1,\ldots,j_k}^{(n)}M^{(n)}_\alpha, \cr
& \partial_{\alpha_{j_1}}\ldots\partial_{\alpha_{j_k}}M^{(n)}_\alpha|_{\alpha=0}
=M_{j_1,\ldots,j_k}^{(n)}. 
\end{align}
Since $|\partial_{\alpha_{j_1}}\ldots\partial_{\alpha_{j_k}}M^{(n)}_\alpha|
\le |M_{j_1,\ldots,j_k}^{(n)}|$
for all $\alpha\in(-\infty,0]^{(V_\infty)}$, we can interchange expectation and 
differentiation at $\alpha=0$ in \eqref{eq:expectation-mg4}. This yields 
the martingale property for $M_{j_1,\ldots,j_k}^{(n)}$. 
\end{proof}

The following are special cases of Corollary~\ref{eq:cor-derivatives-are-mgs}.
\begin{itemize}
\item Since $M_j^{(n)}=e^{u_j^{(n)}}\left(1+is_j^{(n)}\right)$, we know that 
\begin{align}
\left(s_j^{(n)}e^{u_j^{(n)}}\right)_{n\in\N}
\end{align}
is a martingale.
\item One has 
$M_{j,l}^{(n)}
=e^{u_j^{(n)}+u_l^{(n)}}\left(1-s_j^{(n)}s_l^{(n)} 
+ i\left(s_j^{(n)}+s_l^{(n)}\right) \right)$.
Hence, 
\begin{align}
\left(e^{u_j^{(n)}+u_l^{(n)}}\left(1-s_j^{(n)}s_l^{(n)}\right)\right)_{n\in\N}
\quad\text{and}\quad
\left(e^{u_j^{(n)}+u_l^{(n)}}\left(s_j^{(n)}+s_l^{(n)}\right)\right)_{n\in\N}
\end{align}
are martingales. For $j=l$, this yields the martingales
\begin{align}
\left(e^{2u_j^{(n)}}\left(1-\big(s_j^{(n)}\big)^2\right)\right)_{n\in\N}
\quad\text{and}\quad
\left(2s_j^{(n)}e^{2u_j^{(n)}}\right)_{n\in\N}.
\end{align}
\item One has 
\begin{align}
M_{j,l,m}^{(n)} 
= & e^{u_j^{(n)}+u_l^{(n)}+u_m^{(n)}}
\left(1-s_j^{(n)}s_l^{(n)}-s_j^{(n)}s_m^{(n)}-s_l^{(n)}s_m^{(n)}  \right.\nonumber\\
& \left. + i\left(s_j^{(n)}+s_l^{(n)}+s_m^{(n)}-s_j^{(n)}s_l^{(n)}s_m^{(n)}\right)\right).
\end{align}
Hence, the following are martingales:
\begin{align}
& \left(e^{u_j^{(n)}+u_l^{(n)}+u_m^{(n)}}
\left(1-s_j^{(n)}s_l^{(n)}-s_j^{(n)}s_m^{(n)}-s_l^{(n)}s_m^{(n)}\right)\right)_{n\in\N}, \\
&  \left(e^{u_j^{(n)}+u_l^{(n)}+u_m^{(n)}} 
\left(s_j^{(n)}+s_l^{(n)}+s_m^{(n)}-s_j^{(n)}s_l^{(n)}s_m^{(n)}\right)\right)_{n\in\N}, \\
& \left(e^{3u_j^{(n)}}\left(1-3\big(s_j^{(n)}\big)^2\right)\right)_{n\in\N}, \quad 
\left(e^{3u_j^{(n)}}\left(3s_j^{(n)}-\big(s_j^{(n)}\big)^3\right)\right)_{n\in\N}.
\end{align}
\end{itemize}

\section{Extension to Grassmann variables}
\label{sect:3}

We consider now the full supersymmetric $H^{2|2}$ model,  
studied in \cite{disertori-spencer-zirnbauer2010}, including 
Grassmann variables. We start with some preliminaries in Sections 
\ref{subsec:grassmann-algebra} and \ref{subsec:superfns}. In the 
remaining part, we extend the scaling transformation, 
the Laplace transform, and the martingales introduced in the 
previous sections to include Grassmann variables. 

\subsection{Grassmann algebras}
\label{subsec:grassmann-algebra}

Let $\V$ be a finite dimensional $\R$-vector space. Let
\begin{align}
\dach\V:=\bigoplus_{n=0}^{\dim\V}\dach^n\V, \quad
\dach\V_\even:=\bigoplus_{\substack{0\le n\le\dim\V\\ n\text{ even}}}\dach^n\V, \quad
\dach\V_\odd:=\bigoplus_{\substack{0\le n\le\dim\V\\ n\text{ odd}}}\dach^n\V
\end{align}
be the Grassmann algebra generated by it, its even and its odd 
subspace, respectively. In particular, $\R=\dach^0\V\subseteq\dach\V$ and 
$\V=\dach^1\V\subseteq\dach\V$.
The Grassmann product is bilinear and associative. Moreover, for 
all $w,w'\in\dach\V_\odd$ it is anticommutative: $ww'=-w'w$. In particular, 
$w^2=0$. 
Let $\body:\dach\V\to\dach^0\V=\R$ be the projection to the 
0th component and $\soul:\dach\V\to\bigoplus_{n=1}^{\dim\V}\dach^n\V$, 
$\soul(w)=w-\body(w)$, denote the projection to the nilpotent part.  
The subset of positive even elements is defined by 
\begin{align}
\dach\V_\even^+=\{a\in\dach\V_\even:\body(a)>0\}.
\end{align}
As a generalization of \eqref{eq:def-matrix-ab}, 
for $a\in\dach\V_\even^+,b\in\dach\V_\even,\overline w,w\in\dach\V_\odd$, 
we set 
\begin{align}
[a,b,\overline w,w]:=
\left(\begin{array}{cccc}
a & b & \overline w & w \\
0 & 1 & 0 & 0\\
0 & 0 & 1 & 0\\
0 & 0 & 0 & 1
\end{array}\right).
\end{align}
The set of matrices, cf.\ \eqref{eq:def-G}, 
\begin{align}
\label{eq:def-G-grassmann}
\bG(\V):=\{[a,b,\overline w,w]:a\in\dach\V_\even^+,b\in\dach\V_\even,
\overline w,w\in\dach\V_\odd\}
\end{align}
endowed with matrix multiplication forms a group, non-Abelian except
in trivial cases, 
with the neutral element $[1,0,0,0]$. In other words,
\begin{align}
& [a,b,\overline w,w]\cdot[a',b',\overline w',w']
=[aa',b+ab',\overline w+a\overline w',w+aw'],\\
& [a,b,\overline w,w]^{-1}=[a^{-1},-ba^{-1},-\overline w a^{-1},-wa^{-1}]; 
\end{align}
cf.\ \eqref{eq:group-op} and \eqref{eq:inverse}. Note that $a^{-1}$ 
is well-defined because $\body(a)>0$. 

We take again a finite graph $\tilde G=(\tilde V,\tilde E)$ 
with $\tilde V=V\cup\{\delta\}$ as 
in Subsection \ref{subsec:susy-model}. 
We define the cartesian power of the group $\bG(\V)$ with one component 
pinned to the neutral element: 
\begin{align}
\label{eq:def-G-V-grassmann}
\bG(\V)_V:= \{ [a,b,\overline w,w]:=
([a_i,b_i,\overline w_i,w_i])_{i\in \tilde V}\in\bG(\V)^{\tilde V} :\; 
[a_\delta,b_\delta,\overline w_\delta,w_\delta]=[1,0,0,0]\}.
\end{align}

\subsection{Superfunctions and superexpectation}
\label{subsec:superfns}

Let 
\begin{align}
\label{eq:def-A-V}
\A(\V)=\A_V(\V)=C^\infty(\Omega_V,\dach\V)
=C^\infty(\Omega_V,\R)\otimes\dach\V
\end{align} 
be the Grassmann algebra over $\V$ with coefficients
being smooth real-valued functions $f\in C^\infty(\Omega_V,\R)$,
$(u,s)\mapsto f(u,s)$. Elements of $\A(\V)$ are called superfunctions. 

Assume that the vector space $\V$ has a basis 
$(\overline\psi_i,\psi_i)_{i\in V}$. Moreover, we set 
\begin{align}
\overline\psi_\delta=\psi_\delta=0.
\end{align}
Then, $\overline\psi_i,\psi_i\in\V\subseteq\dach\V_\odd$, $i\in\tilde V$, 
implies $\psi_i\overline\psi_j=-\overline\psi_j\psi_i$,
$\psi_i\psi_j=-\psi_j\psi_i$, and
$\overline\psi_i\overline\psi_j=-\overline\psi_j\overline\psi_i$ 
for all $i,j\in\tilde V$. 
To describe a superfunction in $\A(\V)$, the following abbreviations are useful: 
\begin{align}
\I_V=\{(i_1,\ldots,i_n)\in V^n:n\in\N_0, i_1<\ldots<i_n\}
\end{align}
with respect to some fixed linear order $<$ of the vertex set $V$. 
For $I=(i_1,\ldots,i_n)\in\I_V$, we set 
\begin{align}
\label{eq:def-psi-I}
\psi_I=\psi_{i_1}\cdots\psi_{i_n}
\end{align}
and similarly for $\overline\psi_I$. By convention, 
$\overline\psi_\emptyset=\psi_\emptyset=1$. 
Thus, a superfunction $f\in\A(\V)$ can be uniquely written as 
\begin{align}
\label{eq:repr-f}
f(u,s,\overline\psi,\psi)=\sum_{I,J\in\I_V}f_{IJ}(u,s)\overline\psi_I\psi_J
\end{align}
with coefficients $f_{IJ}\in C^\infty(\Omega_V,\R)$. 
Here $f_{\emptyset\emptyset}$ is the body of $f$ and 
$f-f_{\emptyset\emptyset}$ its nilpotent part. An element $f\in\A(\V)$ 
is even if $f_{IJ}=0$ whenever $|I|+|J|$ is odd; $f$ is odd if 
$f_{IJ}=0$ whenever $|I|+|J|$ is even. Let 
$\A(\V)_\even=C^\infty(\Omega_V,\dach\V_\even)$ and 
$\A(\V)_\odd=C^\infty(\Omega_V,\dach\V_\odd)$ 
denote the set of even and odd elements of $\A(\V)$, respectively, and  
let $\A(\V)_\even^+=\{f\in\A(\V)_\even:\body(f)>0\}$. 
Smooth functions (like $\exp$) of elements in $\A(\V)_\even$ are understood as 
power series in the nilpotent part. 

In analogy to the parameter dependent $W^a$ in formula 
\eqref{eq:def-rescaled-weights}
we will consider a further generalization of the supersymmetric sigma model
$H^{2|2}$ from \cite{disertori-spencer-zirnbauer2010}
involving parameters that depend on Grassmann variables. 
Our parameters belong to another Grassmann algebra 
$\dach\V'$ with another finite-dimensional $\R$-vector space $\V'$. 
Both vector spaces $\V$ and $\V'$ are viewed as subspaces of 
their direct sum $\V''=\V\oplus\V'$. The corresponding Grassmann algebras
are related by $\dach\V''=\dach\V\otimes\dach\V'$ with the Grassmann 
product extended to be anticommuting. In particular, 
$\dach\V=\dach\V\otimes\R\subseteq\dach\V''$ and 
$\dach\V'=\R\otimes\dach\V'\subseteq\dach\V''$. 

We will consider superfunctions $f\in\A(\V'')$. Each such 
function can be represented as in \eqref{eq:repr-f} 
with coefficients $f_{IJ}\in\A(\V')$. In the following, we consider 
coupling constants $W_{ij}\in{\dach\V'}^+_\even$ 
for all $(i\sim j)\in\tilde E$
and $W_{ij}=0$ whenever $(i\sim j)\not\in\tilde E$. 
We define the superdensity $\boldsymbol{\rho}^W\in\A(\V'')_\even^+$ by 
\begin{align}
\label{eq:def-bold-rho}
\boldsymbol{\rho}^W(u,s,\overline\psi,\psi)
= & e^{-\frac12\sk{s,A^{W}(u)s}}e^{-\sk{\overline\psi,A^{W}(u)\psi}}
\prod_{(i\sim j)\in \tilde E} e^{-W_{ij}[\cosh(u_i-u_j)-1]}\cr
=& \frac{e^{-\sk{\overline\psi,A^{W}(u)\psi}}}{\det A_{VV}^W(u)} \rho^W(u,s)
\end{align}
with the matrix $A^{W}(u)\in\R^{\tilde V\times\tilde V}$ defined in \eqref{eq:def-A} 
and the density $\rho^W$ defined in \eqref{eq:def-rho}. As Lemma~\ref{le:rho-marginal} 
below shows, $\rho^W$ is the marginal of $\boldsymbol{\rho}^W$. Therefore we use 
the same symbol writing the supersymmetric variant with the corresponding bold symbol. 
This convention will also be used below for other quantities like 
$\zeta$, $\mu^W$, and $\cL^W$. Let 
\begin{align}
\label{eq:def-ell}
\boldsymbol{d\zeta}_V=\boldsymbol{d\zeta}_V[u,s,\overline{\psi},\psi]:=
\prod_{i\in V}\frac{1}{2\pi}\zeta(du_ids_i)\partial_{\overline\psi_i}\partial_{\psi_i}
=\prod_{i\in V}\frac{e^{-u_i}}{2\pi}du_ids_i\partial_{\overline\psi_i}\partial_{\psi_i}
\end{align}
be the supersymmetric reference measure, where we suppress again the Dirac measure
$\delta_{(0,0)}(du_\delta\, ds_\delta)$ in the notation. With these notions the 
supersymmetric sigma model is given by 
\begin{align}
\boldsymbol{\mu}^W(du\, ds\,\partial_{\overline\psi}\,\partial_\psi)
:=& \boldsymbol{d\zeta}_V[u,s,\overline{\psi},\psi]\circ
\boldsymbol{\rho}^W(u,s,\overline\psi,\psi),
\label{eq:def-mu-W-with-grassmann}
\end{align}
where the symbol $\circ$ means that the partial derivatives 
$\partial_{\overline\psi}$ and $\partial_\psi$ act not only on the
superdensity $\boldsymbol{\rho}^W(u,s,\overline\psi,\psi)$, but 
also on the test function as follows:
\begin{align}
\int\boldsymbol{d\mu}^W f
=& \int_{\Omega_V} \boldsymbol{d\zeta}_V(\boldsymbol{\rho}^W f)
\end{align}
for any $f\in\A(\V'')$ for which the integral is defined. 
Here we use the convention 
\begin{align}
\partial_{\overline\psi_i}\,\overline\psi_i =\partial_{\psi_i}\,\psi_i = 1. 
\end{align}
Note that the superintegral 
$\int\boldsymbol{d\mu}^W f$ with integrable arguments $f\in\A(\V'')$ takes 
values in $\dach\V'$.

\begin{lemma}
\label{le:rho-marginal}
The probability measure $\mu^W$ defined in \eqref{eq:def-mu-W} is the marginal of 
the supermeasure $\boldsymbol{\mu}^W$ defined in \eqref{eq:def-mu-W-with-grassmann} 
in the following sense. 
In the special case when the weights 
$W_{ij}$ are real-valued and the superfunction $f$ is an ordinary function 
$f=f(u,s)$, 
i.e.\ does not depend on any Grassmann variables, we have the real-valued
integral
\begin{align}
\int\boldsymbol{d\mu}^W f=\int d\mu^W f.
\end{align}
\end{lemma}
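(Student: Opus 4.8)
The plan is to verify the claimed identity $\int\boldsymbol{d\mu}^W f=\int d\mu^W f$ by computing the left-hand side directly from the definitions and reducing it to the right-hand side. The key observation is that when $f=f(u,s)$ carries no Grassmann dependence and the weights $W_{ij}$ are real, the only Grassmann-valued object in the superintegrand is the factor $e^{-\sk{\overline\psi,A^W(u)\psi}}$ appearing in $\boldsymbol{\rho}^W$; the Berezin integration $\prod_{i\in V}\partial_{\overline\psi_i}\partial_{\psi_i}$ must then act solely on this factor.

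First I would unfold the definition \eqref{eq:def-mu-W-with-grassmann} together with \eqref{eq:def-bold-rho} to write
\begin{align}
\int\boldsymbol{d\mu}^W f
= \int_{\Omega_V}\Big(\prod_{i\in V}\frac{e^{-u_i}}{2\pi}\,du_i\,ds_i\,\partial_{\overline\psi_i}\partial_{\psi_i}\Big)
\Big(\frac{e^{-\sk{\overline\psi,A^W(u)\psi}}}{\det A^W_{VV}(u)}\,\rho^W(u,s)\,f(u,s)\Big).
\end{align}
Since $f$ and $\rho^W/\det A^W_{VV}$ contain no Grassmann variables, they factor out of the Berezin derivatives, leaving the Grassmann integral $\big(\prod_{i\in V}\partial_{\overline\psi_i}\partial_{\psi_i}\big)\,e^{-\sk{\overline\psi,A^W(u)\psi}}$ to evaluate. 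The central step is the standard Berezin identity for a Gaussian Grassmann weight: for a matrix $M\in\R^{V\times V}$ one has $\big(\prod_{i\in V}\partial_{\overline\psi_i}\partial_{\psi_i}\big)\,e^{-\sk{\overline\psi,M\psi}}=\det M$, where here $M=A^W_{VV}(u)$ because $\overline\psi_\delta=\psi_\delta=0$ pins the $\delta$-component to zero and thus restricts the quadratic form $\sk{\overline\psi,A^W(u)\psi}$ to the $V\times V$ block. This produces exactly the factor $\det A^W_{VV}(u)$, which cancels the $1/\det A^W_{VV}(u)$ present in $\boldsymbol{\rho}^W$.

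After this cancellation the remaining integrand is $\big(\prod_{i\in V}\frac{1}{2\pi}\zeta(du_i\,ds_i)\big)\,\rho^W(u,s)\,f(u,s)$, which is by \eqref{eq:def-mu-W} precisely $d\mu^W\,f$, yielding the claim. The main obstacle, and the one point requiring care, is pinning down the sign and index conventions in the Berezin integration so that $\big(\prod_{i\in V}\partial_{\overline\psi_i}\partial_{\psi_i}\big)\,e^{-\sk{\overline\psi,A^W(u)\psi}}=\det A^W_{VV}(u)$ comes out with the correct sign and no spurious factor; this hinges on the ordering convention in \eqref{eq:def-ell}, on the anticommutation relations, and on the convention $\partial_{\overline\psi_i}\overline\psi_i=\partial_{\psi_i}\psi_i=1$. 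Once the Gaussian Berezin formula is invoked with these conventions fixed, the rest is bookkeeping and the identity follows.
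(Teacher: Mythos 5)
Your proposal is correct and follows essentially the same route as the paper's own proof: both reduce the Berezin integration to the single Grassmann-dependent factor $e^{-\sk{\overline\psi,A^W(u)\psi}}$, evaluate it via the Gaussian Berezin identity $\prod_{i\in V}\partial_{\overline\psi_i}\partial_{\psi_i}e^{-\sk{\overline\psi,A^W(u)\psi}}=\det A^W_{VV}(u)$, and cancel this against the $1/\det A^W_{VV}(u)$ in $\boldsymbol{\rho}^W$ to recover $\rho^W$. Your additional remarks about checking sign and ordering conventions are a sensible precaution but do not change the argument.
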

\begin{proof}
Since $f$ is an ordinary function, the Grassmann part in $\int\boldsymbol{d\mu}^W f$
is reduced to 
\begin{align}
\prod_{i\in V}\partial_{\overline\psi_i}\partial_{\psi_i}
e^{-\sk{\overline\psi,A^{W}(u)\psi}}=\det A^W_{VV}(u).
\end{align}
Therefore, the definition \eqref{eq:def-bold-rho} of $\boldsymbol{\rho}^W$
yields
\begin{align}
\prod_{i\in V}\partial_{\overline\psi_i}\partial_{\psi_i}
\boldsymbol{\rho}^W(u,s,\overline\psi,\psi)
=\rho^W(u,s).
\end{align}
The result follows. 
\end{proof}

\subsection{Super scaling transformation}

We generalize now 
the definition \eqref{eq:S-in-terms-of-right-operation} of the 
scaling transformation $\s_{[a,b]}:\Omega_V \to\Omega_V$ to the present
setup involving Grassmann parameters. 
Take a superparameter $[a,b,\overline{\chi},\chi]\in\bG(\V')_V$; recall 
that $[a_\delta,b_\delta,\overline\chi_\delta,\chi_\delta]=[1,0,0,0]$ by 
\eqref{eq:def-G-V-grassmann}.
In order to find an analogue to equation 
\eqref{eq:claim-thm-image-measure-integral-version-new}, 
we consider a generalization of the pull-back 
\begin{align}
\s_{[a,b]}^*f:=f\circ\s_{[a,b]}, \quad f:\Omega_V\to\R
\end{align}
to a supertransformation $\sstar_{[a,b,\overline{\chi},\chi]}:\A(\V'')\to\A(\V'')$ defined as follows. Take a general element
\begin{align}
f(u,s,\overline\psi,\psi)=\sum_{I,J\in\I_V}f_{IJ}(u,s)\overline\psi_K\psi_L\in\A(\V'')
\end{align}
with coefficients $f_{IJ}\in\A(\V')$. In the following, for any even 
$u',s'$, we interpret $f_{IJ}(u',s')$ again as power series in the 
nilpotent part of $u'$ and $s'$. We set
\begin{align}
(\sstar_{[a,b,\overline{\chi},\chi]}f)(u,s,\overline\psi,\psi)
=\sum_{I,J\in\I_V}f_{IJ}(u',s')\overline\psi_K'\psi_L'\in\A(\V''),
\end{align}
where the expressions for $u'=u'(u),s'=s'(u,s),\overline\psi'=
\overline\psi'(u,\overline\psi),\psi'=\psi'(u,\psi)$ are given by the following
formula, to be read componentwise
\begin{align}
[e^{-u'},s',\overline\psi',\psi']
=
[e^{-u},s,\overline\psi,\psi]
\cdot
[a,b,\overline{\chi},\chi]^{-1}.
\end{align} 
This means that the explicit expressions for $u'$, $s'$, $\overline\psi'$,
and $\psi'$ are given by
\begin{align}
\label{eq:def-strich}
& u_i'=u_i+\log a_i, \quad
s_i'=s_i-e^{-u_i}b_ia_i^{-1},\\
& {\overline\psi}_i'=\overline\psi_i-e^{-u_i}\overline\chi_ia_i^{-1},\quad
\psi_i'=\psi_i-e^{-u_i}\chi_ia_i^{-1}\nonumber
\end{align}
for all $i\in V$. 
Note that $[e^{-u_\delta'},s_\delta',\overline\psi_\delta',\psi_\delta']=[1,0,0,0]$, 
and that $u_i'$ and $s_i'$ are even superfunctions in $\A(\V'')$. 

Note that $\sstar_\cdot$ is a group operation, i.e.\ for all $v,v'\in\G(\V)_V$, 
\begin{align}
\sstar_{[1,0,0,0]}=\id, \quad 
\sstar_{v\cdot v'}=\sstar_v\sstar_{v'}, \quad
\sstar_{v^{-1}}=(\sstar_v)^{-1}. 
\end{align}

We will need the following transformation formula for the supermeasure
$d\boldsymbol{\zeta}_V$ with respect to $\sstar_\cdot$. 

\begin{lemma}
\label{le:transformation-zeta}
For $v=[a,b,\overline\chi,\chi]\in\G(\V')_V$ and for any compactly 
supported (or sufficiently fast decaying) test superfunction 
$f\in\A(\V'')$, one has
\begin{align}
\int d\boldsymbol{\zeta}_V\sstar_v f 
= \prod_{j\in V}a_j\int d\boldsymbol{\zeta}_V f. 
\end{align}
\end{lemma}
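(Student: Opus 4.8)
The plan is to prove Lemma~\ref{le:transformation-zeta} by reducing the supersymmetric statement to the already-established behavior of the bosonic reference measure $\zeta_V$, while carefully tracking how the Grassmann differential operators $\partial_{\overline\psi_i}\partial_{\psi_i}$ transform under the shift $\sstar_v$. The key observation is that the map $\sstar_v$ acts on the variables $(u,s,\overline\psi,\psi)$ by the explicit formulas \eqref{eq:def-strich}: the bosonic part $u_i'=u_i+\log a_i$, $s_i'=s_i-e^{-u_i}b_ia_i^{-1}$ reproduces exactly the transformation $\s^{-1}_{[a,b]}$ underlying Lemma~\ref{le:zeta-transformed-with-s}, while the Grassmann part $\overline\psi_i'=\overline\psi_i-e^{-u_i}\overline\chi_ia_i^{-1}$, $\psi_i'=\psi_i-e^{-u_i}\chi_ia_i^{-1}$ is merely a shift of the odd variables by nilpotent quantities depending on $u$ but not on $\overline\psi,\psi$.

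First I would write out $\int d\boldsymbol{\zeta}_V\,\sstar_v f$ using the definition \eqref{eq:def-ell} of $\boldsymbol{d\zeta}_V$, so that the Berezin integration amounts to applying $\prod_{i\in V}\partial_{\overline\psi_i}\partial_{\psi_i}$ to $(\sstar_v f)$ and then taking the bosonic integral against $\prod_{i\in V}\frac{1}{2\pi}\zeta(du_i\,ds_i)$. Expanding $\sstar_v f$ in the basis $\overline\psi_K'\psi_L'$ as in the definition, I would substitute the component shifts for $\overline\psi_i'$ and $\psi_i'$. Since the Grassmann shift is by a term proportional to $e^{-u_i}a_i^{-1}$ (even, nilpotent in $\V'$), the crucial point is that Berezin integration is translation invariant in the odd variables: the operator $\partial_{\overline\psi_i}\partial_{\psi_i}$ applied to a function of $\overline\psi_i'=\overline\psi_i-c_i$ picks out the same top coefficient as when applied to a function of $\overline\psi_i$, i.e.\ the constant shift $c_i=e^{-u_i}\overline\chi_ia_i^{-1}$ drops out of the Berezin integral. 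This reduces the Grassmann integration after the transformation to the same thing as before the transformation, leaving only the bosonic shift to account for.

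Having collapsed the odd part, what remains is precisely $\prod_{i\in V}\frac{1}{2\pi}\zeta(du_i\,ds_i)$ integrated against a function whose $(u,s)$-arguments have been shifted according to $u_i'=u_i+\log a_i$ and $s_i'=s_i-e^{-u_i}b_ia_i^{-1}$. This is exactly the content of Lemma~\ref{le:zeta-transformed-with-s}: the image of $\zeta_V$ under $\s_{[a,b]}$ carries the Jacobian factor $\prod_{i\in V}a_i$, equivalently the pull-back $\s^*_{[a,b]}$ produces the factor $\prod_{j\in V}a_j$ when integrating against $\zeta_V$. I would invoke that lemma directly (in its integrated form), yielding the claimed factor $\prod_{j\in V}a_j$ in front of $\int d\boldsymbol{\zeta}_V\,f$.

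The main obstacle I anticipate is making the interchange of bosonic and Grassmann integration fully rigorous when the shift parameters $a_i,b_i,\overline\chi_i,\chi_i$ are themselves elements of the Grassmann algebra $\dach\V'$ rather than ordinary numbers: the $(u,s)$-arguments $u_i',s_i'$ are then even superfunctions with nonzero nilpotent part, so $f_{IJ}(u',s')$ must be interpreted as a power series in that nilpotent part (as stipulated in the text just before the definition of $\sstar$). I would handle this by expanding each $f_{IJ}(u',s')$ in its nilpotent part, observing that only finitely many terms survive because $\dach\V'$ is finite-dimensional, and then noting that the Berezin-integration argument and Lemma~\ref{le:zeta-transformed-with-s} apply term by term, since $\prod_{j\in V}a_j$ factors out of each contribution uniformly. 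The translation invariance of the Berezin integral and the finite-dimensionality of the Grassmann algebra are what keep this manipulation legitimate; once those are in place the computation is routine.
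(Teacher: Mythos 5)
Your strategy genuinely differs from the paper's: the paper does not separate odd and even parts, but instead writes $(\sstar_v f)\prod_{i\in V}e^{-u_i}=\sstar_v\bigl(f\prod_{i\in V}e^{-(u_i-\log a_i)}\bigr)$ and invokes the supertransformation formula of Lemma~\ref{le:supertransformation-formula} from the appendix, whose whole content is that the super Jacobi matrix of the substitution has Berezinian $1$; the factor $\prod_j a_j$ then falls out of $e^{-(u_i-\log a_i)}=a_ie^{-u_i}$ before any change of variables is performed. Your treatment of the odd variables is correct: since $\overline\psi_i'=\overline\psi_i-e^{-u_i}\overline\chi_ia_i^{-1}$ and $\psi_i'=\psi_i-e^{-u_i}\chi_ia_i^{-1}$ are shifts by elements of $\dach\V'_\odd$ containing no $\overline\psi,\psi$, the operator $\partial_{\overline\psi_i}\partial_{\psi_i}$ extracts the same top coefficient before and after the shift, pointwise in $(u,s)$.

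The gap is in the even part. Lemma~\ref{le:zeta-transformed-with-s} is stated and proved only for $[a,b]\in\G_V$ with real $a_i>0$ and $b_i\in\R$, whereas here $a_i\in{\dach\V'}^+_\even$ and $b_i\in\dach\V'_\even$ may carry nonzero nilpotent parts. Your proposed fix --- Taylor-expand $f_{IJ}(u',s')$ in the nilpotent part and apply Lemma~\ref{le:zeta-transformed-with-s} ``term by term, since $\prod_{j}a_j$ factors out of each contribution uniformly'' --- does not work as stated: the zeroth Taylor term only produces $\prod_j\body(a_j)$, and the higher terms involve derivatives of $f_{IJ}$ multiplied by $u$-dependent nilpotent coefficients from which no factor $\prod_j a_j$ can be extracted individually. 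Already for $|V|=1$, $b=\overline\chi=\chi=0$, $a=\alpha(1+\nu)$ with $\alpha>0$ and $\nu$ nilpotent even, one finds $\int e^{-u}f(u+\log\alpha+\nu)\,du=\alpha\int e^{-u}f\,du+\alpha\nu\int e^{-u}f'\,du$, and only an integration by parts in the second summand reassembles this into $a\int e^{-u}f\,du$. So the nilpotent corrections must be recombined via integration by parts --- equivalently, via the change-of-variables formula for even nilpotent shifts, which is precisely the content of Lemma~\ref{le:supertransformation-formula}. With that ingredient supplied (and the corresponding argument for the $u$-dependent nilpotent shift of $s$), your decomposition does yield a valid proof; without it, the reduction to Lemma~\ref{le:zeta-transformed-with-s} is not justified.
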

\begin{proof}
Using $(\sstar_v)^{-1}(e^{-u_i})=e^{-(u_i-\log a_i)}$ and using the 
supertransformation formula described in Lemma 
\ref{le:supertransformation-formula} in the appendix, we calculate
\begin{align}
\int d\boldsymbol{\zeta}_V\sstar_v f 
= & (2\pi)^{-|V|}\int \prod_{i\in V}\, du_ids_i\partial_{\overline\psi_i}
\partial_{\psi_i} \left((\sstar_vf)(u,s,\overline\psi,\psi)
\prod_{i\in V}e^{-u_i}\right)\nonumber\\
=& (2\pi)^{-|V|}\int \prod_{i\in V}\, du_ids_i\partial_{\overline\psi_i}
\partial_{\psi_i} \sstar_v\left(f(u,s,\overline\psi,\psi)
\prod_{i\in V}e^{-(u_i-\log a_i)}\right)\nonumber\\
=& (2\pi)^{-|V|}\int \prod_{i\in V}\, du_ids_i\partial_{\overline\psi_i}
\partial_{\psi_i} f(u,s,\overline\psi,\psi)
\prod_{i\in V}e^{-(u_i-\log a_i)}. 
\end{align}
The claim follows. 
\end{proof}

\subsection{Grassmann-Laplace transform}
In analogy to the definition \eqref{eq:def-theta} of $\theta^{V,W}$, we define 
odd superfunctions $\overline\phi^{V,W}(u,\overline\psi)$ and 
$\phi^{V,W}(u,\psi)$ by 
\begin{align}
\label{eq:def-phi}
\overline\phi^{V,W}(u,\overline\psi)=e^{-u}_{VV}A^W_{VV}(u)\overline\psi_V, \quad
\phi^{V,W}(u,\psi)=e^{-u}_{VV}A^W_{VV}(u)\psi_V,
\end{align}
cf.\ \eqref{eq:theta-componentwise}. 
Here, the restriction $\psi_V=(\psi_i)_{i\in V}$ should not be confused with 
the product $\psi_I$, $I\in\I_V$, defined in \eqref{eq:def-psi-I}.
Componentwise, we have for $i\in V$
\begin{align}
\overline\phi_i^{V,W}(u,\overline\psi)
=\sum_{j\in\tilde V}W_{ij}e^{u_j}(\overline\psi_i-\overline\psi_j), 
\quad\phi_i^{V,W}(u,\psi) = \sum_{j\in\tilde V}W_{ij}e^{u_j}(\psi_i-\psi_j).
\end{align}
As for $\beta$ and $\theta$, we will drop the dependence on $V$, $W$, or both
if there is no risk of confusion. 

Our goal is to derive a generalization of Theorem~\ref{thm:scaling}
including Grassmann variables. In the following, we abbreviate for 
$[a,b,\overline\chi,\chi]\in\bG(\V')_V$
\begin{align}
\label{eq:def-varpi}
\varpi^V=\varpi^{V,W}=(\beta^V,\theta^V,\overline\phi^V,\phi^V), \quad
\pi^V_{[a,b,\overline\chi,\chi]}=(a^2+b^2+2\overline\chi\chi-1,b,\overline\chi,\chi)_V, 
\end{align}
which fulfill $\varpi^V,\pi^V_{[a,b,\overline\chi,\chi]}
\in\left(\A(\V'')_\even\times\A(\V'')_\even\times\A(\V'')_\odd\times\A(\V'')_\odd\right)^V$.
We use the following generalization of the Euclidean scalar product: 
\begin{align}
\sk{\pi^V_{[a,b,\overline\chi,\chi]},\varpi^V}
=\langle(a^2+b^2+2\overline\chi\chi-1)_V,\beta^W\rangle
+\langle b_V,\theta^W\rangle
+\langle\overline\chi_V,\phi^W\rangle
+\langle\overline\phi^W,\chi_V\rangle.
\end{align}
Note the reversed order of factors in the last product, which causes a 
sign change due to anticommutativity. 

\begin{theorem}
\label{thm:scaling-grassmann}
For $[a,b,\overline\chi,\chi]\in\bG(\V')_V$, the 
joint Grassmann-Laplace transform of $\beta^W$, $\theta^W$, $\phi^W$, and 
$\overline\phi^W$ is given by 
\begin{align}
\label{eq:claim-thm-grassmann-Laplace}
& \int \boldsymbol{d\mu}^W 
e^{-\sk{\pi^V_{[a,b,\overline\chi,\chi]},\varpi^V}}
=  \boldsymbol{\mathcal{L}}^W(a,b,\overline\chi,\chi)
\end{align}
with the constant
\begin{align}
\label{eq:def-L-grassmann}
\boldsymbol{\mathcal{L}}^W(a,b,\overline\chi,\chi)
= & \prod_{(i\sim j)\in \tilde E} 
e^{-W_{ij}(a_ia_j+b_ib_j+\overline\chi_i\chi_j+\overline\chi_j\chi_i-1)}
\cdot \prod_{j\in V}\frac{1}{a_j}\in\dach\V_\even'.
\end{align}
Moreover, for every compactly supported 
(or sufficiently fast decaying\footnote{A sufficient condition is 
given above \eqref{eq:int-cart-hor}.}
in $u$ and $s$) test superfunction $f\in\A(\V'')$ it holds 
\begin{align}
\label{eq:claim-thm-image-measure-grassmann}
\int \boldsymbol{d\mu}^W f
e^{-\sk{\pi^V_{[a,b,\overline\chi,\chi]},\varpi^V}}
= & \boldsymbol{\mathcal{L}}^W(a,b,\overline\chi,\chi)
\int\boldsymbol{d\mu}^{W^a}\sstar_{[a,b,\overline\chi,\chi]}f,
\end{align}
where $W^a=(W^a_{ij}:=a_ia_jW_{ij})_{i,j\in\tilde V}$ 
with $W^a_{ij}\in\dach\V'_\even$. 
\end{theorem}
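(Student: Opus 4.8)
The plan is to prove the general integral identity \eqref{eq:claim-thm-image-measure-grassmann} first and to obtain the Grassmann--Laplace transform \eqref{eq:claim-thm-grassmann-Laplace} afterwards as the special case $f=1$. The argument parallels the first proof of Theorem~\ref{thm:scaling}, whose two ingredients were the transformation of the reference measure (Lemma~\ref{le:zeta-transformed-with-s}) and the ratio of densities (Lemma~\ref{le:ratio-of-densities}). The super-analogue of the first ingredient is already available as Lemma~\ref{le:transformation-zeta}, so the heart of the work is a super-analogue of the ratio of densities, namely the computation of $(\sstar_{v^{-1}}\boldsymbol{\rho}^{W^a})/\boldsymbol{\rho}^W$ for $v=[a,b,\overline\chi,\chi]\in\bG(\V')_V$. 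Throughout I would use that $\sstar_v$ is an even algebra homomorphism with inverse $\sstar_{v^{-1}}$, which is built into its defining substitution of the transformed coordinates.

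To compute this ratio I would exploit the factorised form \eqref{eq:def-bold-rho}, $\boldsymbol{\rho}^W=\det A^W_{VV}(u)^{-1}e^{-\sk{\overline\psi,A^W(u)\psi}}\rho^W(u,s)$, together with the invariance $A^{W^a}(\tilde u)=A^W(u)$ from Lemma~\ref{le:transformed-A}, where $(\tilde u,\tilde s,\tilde{\overline\psi},\tilde\psi)$ is the image of $(u,s,\overline\psi,\psi)$ under the coordinate change defining $\sstar_{v^{-1}}$, explicitly $\tilde u=u-\log a$, $\tilde s=s+e^{-u}b$, $\tilde{\overline\psi}=\overline\psi+e^{-u}\overline\chi$, $\tilde\psi=\psi+e^{-u}\chi$ (read off \eqref{eq:def-strich} with $v$ replaced by $v^{-1}$). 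The determinants $\det A^{W^a}_{VV}(\tilde u)$ and $\det A^W_{VV}(u)$ then cancel, the bosonic quotient $\rho^{W^a}(\tilde u,\tilde s)/\rho^W(u,s)$ is exactly the content of Lemma~\ref{le:ratio-of-densities} (whose algebraic proof should go through for the even, possibly Grassmann-valued, weights $W,W^a$), and the only genuinely new piece is the fermionic bilinear $\sk{\tilde{\overline\psi},A^W(u)\tilde\psi}-\sk{\overline\psi,A^W(u)\psi}$.

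Expanding this difference with $A:=A^W(u)$ symmetric produces three cross terms $\sk{\overline\psi,Ae^{-u}\chi}$, $\sk{e^{-u}\overline\chi,A\psi}$ and $\sk{e^{-u}\overline\chi,Ae^{-u}\chi}$. Using $\overline\psi_\delta=\psi_\delta=\overline\chi_\delta=\chi_\delta=0$ and the definitions \eqref{eq:def-phi} of $\overline\phi^W,\phi^W$, the first two are identified with $\sk{\overline\phi^W,\chi_V}$ and $\sk{\overline\chi_V,\phi^W}$, with the reversed factor order and the symmetry $A_{ij}=A_{ji}$ accounting for the anticommuting signs, while the third equals $\sk{\overline\chi_V,H^W_{\beta(u)}\chi_V}=2\sum_{i\in V}\beta_i^W\overline\chi_i\chi_i-\sum_{i,j\in V}W_{ij}\overline\chi_i\chi_j$ by \eqref{eq:relation-H-beta-A} and \eqref{eq:def-H-beta}. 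Assembling the bosonic and fermionic contributions, the diagonal term $-2\beta_i^W\overline\chi_i\chi_i$ merges with $-(a_i^2+b_i^2-1)\beta_i^W$ into $-(a_i^2+b_i^2+2\overline\chi_i\chi_i-1)\beta_i^W$, matching the first slot of $\pi^V_{[a,b,\overline\chi,\chi]}$ from \eqref{eq:def-varpi}, while the off-diagonal term $\sum_{i,j}W_{ij}\overline\chi_i\chi_j=\sum_{(i\sim j)\in\tilde E}W_{ij}(\overline\chi_i\chi_j+\overline\chi_j\chi_i)$ combines with the bosonic edge factor to upgrade each edge weight to $a_ia_j+b_ib_j+\overline\chi_i\chi_j+\overline\chi_j\chi_i-1$. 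The outcome is
\[
\frac{\sstar_{v^{-1}}\boldsymbol{\rho}^{W^a}}{\boldsymbol{\rho}^W}
=\prod_{(i\sim j)\in\tilde E}e^{W_{ij}(a_ia_j+b_ib_j+\overline\chi_i\chi_j+\overline\chi_j\chi_i-1)}\,e^{-\sk{\pi^V_{[a,b,\overline\chi,\chi]},\varpi^V}},
\]
whose edge product is a constant element of $\dach\V'_\even$.

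It then remains to feed this into Lemma~\ref{le:transformation-zeta}: writing $\boldsymbol{\rho}^{W^a}\,\sstar_v f=\sstar_v\big((\sstar_{v^{-1}}\boldsymbol{\rho}^{W^a})\,f\big)$ and applying the lemma yields $\int\boldsymbol{d\mu}^{W^a}\sstar_v f=\prod_{j\in V}a_j\int\boldsymbol{d\zeta}_V(\sstar_{v^{-1}}\boldsymbol{\rho}^{W^a})f=\prod_{j\in V}a_j\int\boldsymbol{d\mu}^W\big((\sstar_{v^{-1}}\boldsymbol{\rho}^{W^a})/\boldsymbol{\rho}^W\big)f$. Since the edge product pulls out as a constant and $\prod_{j}a_j$ times it equals $\boldsymbol{\mathcal{L}}^W(a,b,\overline\chi,\chi)^{-1}$ by \eqref{eq:def-L-grassmann}, this rearranges exactly into \eqref{eq:claim-thm-image-measure-grassmann}. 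Finally \eqref{eq:claim-thm-grassmann-Laplace} is the case $f=1$, where $\sstar_v 1=1$ and the normalisation $\int\boldsymbol{d\mu}^{W^a}1=1$ is the supersymmetric Ward identity of the $H^{2|2}$ model; valid for real weights, it extends to the even Grassmann-valued $W^a$ by expanding in the nilpotent parts, each derivative of the constant $1$ vanishing. I expect the main obstacle to be the careful bookkeeping of anticommuting signs in the fermionic bilinear---in particular confirming $\sk{\overline\psi,Ae^{-u}\chi}=\sk{\overline\phi^W,\chi_V}$ in the reversed order prescribed by the definition of $\sk{\pi^V,\varpi^V}$---together with justifying that the purely algebraic identities of Lemmas~\ref{le:ratio-of-densities} and~\ref{le:transformed-A} survive once the weights carry nilpotent Grassmann parts.
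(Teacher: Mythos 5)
Your proposal is correct and follows essentially the same route as the paper's proof: both reduce the claim to the transformation of the reference supermeasure (Lemma~\ref{le:transformation-zeta}) combined with a computation of $\sstar_{v^{-1}}\boldsymbol{\rho}^{W^a}/\boldsymbol{\rho}^W$ via the invariance $\sstar_{v^{-1}}A^{W^a}=A^W$, with the fermionic bilinear expanding into exactly the cross terms $\sk{\overline\phi^W,\chi_V}$, $\sk{\overline\chi_V,\phi^W}$, and $\sk{\overline\chi,H^W_{\tilde\beta(u)}\chi}$ that you identify. The only cosmetic difference is that you quote Lemma~\ref{le:ratio-of-densities} for the bosonic quotient while the paper redoes that computation in terms of the quadratic forms $\sk{a,H^W_{\tilde\beta(u)}a}$ and $\sk{b,H^W_{\tilde\beta(u)}b}$; the algebraic content is identical.
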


Note that equation \eqref{eq:claim-thm-image-measure-grassmann} is the 
analogue of 
\eqref{eq:claim-thm-image-measure-integral-version-new}. 
We remark that in the special case $b=0$, $\overline\chi=0=\chi$, 
which was already treated in Theorem 2.1 in \cite{disertori-merkl-rolles2017},
$a^2+b^2+2\overline\chi\chi-1$ just reduces to $a^2-1$, which was called 
$\lambda$ in the citation. If we want the Laplace parameters 
$a^2+b^2+2\overline\chi\chi-1$ and $b$ to be real-valued, 
this enforces the parameters $a$ \emph{not} to be real-valued but to 
take values in 
the even part of a Grassmann algebra. This is why we have to allow 
Grassmann algebra-valued weights $W^a_{ij}\in\dach\V'_\even$ rather than 
only real-valued weights.

\medskip\noindent
\begin{proof}[Proof of Theorem~\ref{thm:scaling-grassmann}]
We abbreviate again $v=[a,b,\overline\chi,\chi]$.
Using Lemma~\ref{le:transformation-zeta}, we obtain 
\begin{align}
\int\boldsymbol{d\mu}^{W^a}\,\sstar_vf
=& \int \boldsymbol{d\zeta}_V\, (\boldsymbol{\rho}^{W^a}
\sstar_vf)
= \int \boldsymbol{d\zeta}_V\,(\sstar_v((\sstar_{v^{-1}}
\boldsymbol{\rho}^{W^a})\, f))\nonumber\\
=& \prod_{j\in V}a_j \int \boldsymbol{d\zeta}_V\,
((\sstar_{v^{-1}}\boldsymbol{\rho}^{W^a})\, f).
\label{eq:image-grassmann-integral-variant-new}
\end{align}
Note that 
\begin{align}
(\sstar_{v^{-1}}f)(u,s,\overline\psi,\psi) = 
f(u-\log a,s+e^{-u}b,\overline\psi+e^{-u}\overline\chi,\psi+e^{-u}\chi).
\end{align}
By Lemma~\ref{le:transformed-A}, one has $A^{W^a}(u-\log a)=A^W(u)$
for $a=(a_i)_{i\in\tilde V}\in(\R^+_0)^{\tilde V}$ with $a_\delta=1$. 
Since the entries of the matrix $A^W(u)$ are smooth functions of 
$W_{ij}e^{u_i+u_j}$, this identity remains true if we replace $a_i$, $i\in V$, by 
even elements of the Grassmann algebra $\dach\V'$ with $\body(a_i)>0$. 
Consequently (cf.\ \eqref{eq:transformed-A}), 
\begin{align}
\label{eq:matrix-A-invariant-under-supertrafo}
\sstar_{v^{-1}} A^{W^a}=A^W.
\end{align}
The relation \eqref{eq:expr1-rho} allows us to rewrite 
$\boldsymbol{\rho}^{W^a}$ as follows: 
\begin{align}
\boldsymbol{\rho}^{W^a}(u,s,\overline\psi,\psi)
= & e^{-\frac12\sk{s,A^{W^a}(u)s}}e^{-\sk{\overline\psi,A^{W^a}(u)\psi}}
e^{-\frac12\sk{e^{-u}_{\tilde V},A^{W^a}(u)e^{-u}_{\tilde V}}}. 
\end{align}
Using \eqref{eq:matrix-A-invariant-under-supertrafo} and 
the expression \eqref{eq:def-H-beta} for $H^W_{\tilde\beta(u)}$, we calculate
\begin{align}
& \sstar_{v^{-1}}(\sk{\overline\psi,A^{W^a}(u)\psi})
= \sk{\overline\psi+e^{-u}\overline\chi,A^W(u)(\psi+e^{-u}\chi)}
\nonumber\\
=& \sk{\overline\psi,A^W(u)\psi}+\sk{\overline\phi^W(u,\overline\psi),\chi_V}
+\sk{\overline\chi_V,\phi^W(u,\psi)}
+\sk{\overline\chi,H^W_{\tilde\beta(u)}\chi}.
\end{align}
As in \eqref{eq:trafo-gauss} and \eqref{eq:trafo-cosh}, we obtain 
\begin{align}
\sstar_{v^{-1}}\sk{s,A^{W^a}(u)s}
=&\sk{s,A^{W}(u)s} + \sk{b,H_{\tilde\beta(u)}^Wb}+2\sk{b_V,\theta^W(u,s)}, \\
\sstar_{v^{-1}}\sk{e^{-u}_{\tilde V},A^{W^a}(u)e^{-u}_{\tilde V}}
= & \sk{a,H_{\tilde\beta(u)}^Wa}.
\end{align}
Combining the above identities and relation \eqref{eq:expr2-rho}, we find 
\begin{align}
\sstar_{v^{-1}}\boldsymbol{\rho}^{W^a}(u,s,\overline\psi,\psi)
= & \boldsymbol{\rho}^W(u,s,\overline\psi,\psi)
e^{-\frac12 \left(\sk{a,H_{\tilde\beta(u)}^Wa}
+\sk{b,H_{\tilde\beta(u)}^Wb}
+2\sk{\overline\chi,H^W_{\tilde\beta(u)}\chi}
-\sk{1_{\tilde V},H^W_{\tilde\beta(u)} 1_{\tilde V}}\right)}\nonumber\\
& \cdot e^{-\sk{b_V,\theta^W(u,s)}}
e^{-\sk{\overline\phi^W(u,\overline\psi),\chi_V}
-\sk{\overline\chi_V,\phi^W(u,\psi)}}.
\label{eq:rho-composed-with-S}
\end{align}
Using $a_\delta^2+b_\delta^2+2\overline\chi_\delta\chi_\delta-1=0$, we 
rewrite the first exponent in the last expression as follows
\begin{align}
& -\frac12\left(\sk{a,H_{\tilde\beta(u)}^Wa}
+\sk{b,H_{\tilde\beta(u)}^Wb}+2\sk{\overline\chi,H^W_{\tilde\beta(u)}\chi}
-\sk{1_{\tilde V},H^W_{\tilde\beta(u)} 1_{\tilde V}}\right) \nonumber\\
= & \sum_{(i\sim j)\in \tilde E} W_{ij}
(a_ia_j+b_ib_j+\overline\chi_i\chi_j+\overline\chi_j\chi_i-1)
-\sum_{i\in V}(a_i^2+b_i^2+2\overline\chi_i\chi_i-1)\beta_i^W.
\end{align}
Substituting this in \eqref{eq:rho-composed-with-S} and the result 
in \eqref{eq:image-grassmann-integral-variant-new}, claim 
\eqref{eq:claim-thm-image-measure-grassmann} follows. 
Formula \eqref{eq:claim-thm-grassmann-Laplace} is the special case of 
\eqref{eq:claim-thm-image-measure-grassmann} for $f$ being the constant $1$. 
\end{proof}

\subsection{Ward identities}

To use symmetries of the supersymmetric sigma model, we consider 
cartesian coordinates $x=(x_i)_{i\in\tilde V}$, $y=(y_i)_{i\in\tilde V}$, 
$z=(z_i)_{i\in\tilde V}$, $\xi=(\xi_i)_{i\in\tilde V}$, and 
$\eta=(\eta_i)_{i\in\tilde V}$ defined by 
\begin{align}
\label{eq:change-of-coordinates1}
& x_i=\sinh u_i-\left(\frac12s_i^2+\overline\psi_i\psi_i\right)e^{u_i}, \quad
y_i=s_ie^{u_i}, \quad
\xi_i=e^{u_i}\overline\psi_i, \quad
\eta_i=e^{u_i}\psi_i, \\
\label{eq:change-of-coordinates2}
& z_i=\sqrt{1+x^2_i+y^2_i+2\xi_i\eta_i}=\cosh u_i+\left(\frac12s_i^2+\overline\psi_i\psi_i\right)e^{u_i}.
\end{align}
In particular, $x_\delta=y_\delta=\xi_\delta=\eta_\delta=0$ and $z_\delta=1$.  
Let 
\begin{align}
\label{eq:def-scart}
{\mathcal{S}}_\cart(x,y,\xi,\eta)=
-\sum_{(i\sim j)\in \tilde E}W_{ij}(-1-x_ix_j-y_iy_j+z_iz_j-\xi_i\eta_j+\eta_i\xi_j)
\end{align}
and define 
\begin{align}
\int \boldsymbol{d\mu}^W_\cart f 
:=& \int \prod_{i\in V}
\frac{dx_idy_i}{2\pi}\, \partial_{\xi_i}\partial_{\eta_i}
\left(\prod_{i\in V}\frac{1}{z_i}\cdot e^{{\mathcal{S}}_\cart(x,y,\xi,\eta)} 
f(x,y,\xi,\eta)\right)
\end{align}
for any compactly supported or sufficiently fast decaying test function $f$. 

Let $\V_\cart$ denote the $\R$-vector space with basis $(\xi_i,\eta_i)_{i\in V}$. 
Let $\mathbb{S}_\susy(\Omega_V, \xi,\eta)$ denote the space of superfunctions 
of the form 
\begin{align}
\label{eq:expansion-f-cart}
\begin{array}{ll}
f_\cart : & \Omega_V \to \mathcal{A}(\V_\cart)\\
    & (x,y)\mapsto f_\cart(x,y,\xi,\eta)= \sum_{I,J\in\I_V} f_{IJ}(x,y)\xi_I\eta_J, 
\end{array}
\end{align}
where the coefficients $f_{IJ}$ are Schwartz functions and 
\begin{align}
\xi_I=\prod_{i\in I}\xi_i,\quad \eta_J=\prod_{j\in J}\eta_j.
\end{align}
After doing the change of coordinates given in \eqref{eq:change-of-coordinates1},
we obtain the test function in horospherical coordinates 
$f_\hor : \Omega_V \to \mathcal{A}(\V)$, 
\begin{align}
(u,s)\mapsto  & f_\hor(u,s,\overline\psi,\psi) \nonumber\\
 =&  f_\cart\left(x(u,s,\overline{\psi},\psi),
y(u,s,\overline{\psi},\psi),\xi(u,s,\overline{\psi},\psi),\eta(u,s,\overline{\psi},\psi)\right). 
\end{align}
These notions can be directly extended to superfunctions involving parameters
that depend on Grassmann variables by considering
$f_\cart,{\mathcal{S}}_\cart : \Omega_V \to \mathcal{A}(\V_\cart)\otimes\dach\V'.$
Lemma 5.1 of \cite{disertori-merkl-rolles2017} implies that for any superfunction 
$f_\cart(x,y,\xi,\eta)$ 
with the property
$e^{{\mathcal{S}}_\cart}f_\cart\in\mathbb{S}_\susy(\Omega_V,\xi,\eta)\otimes\dach\V'$,
one has
\begin{align}
\label{eq:int-cart-hor}
\int\boldsymbol{d\mu}^W_\cart f_\cart=\int\boldsymbol{d\mu}^W f_\hor.
\end{align}

\begin{lemma}[Ward identities]
\label{le:rot-sym-grassmann}
Let $f:\C\to\C$ be a holomorphic function and 
$\tau=(\tau_i)_{i\in V}\in(\dach\V_\odd')^V$.
If $f(\sk{\alpha,x+z+iy}+\sk{\tau,\xi+i\eta})e^{{\mathcal{S}}_\cart}\in
\mathbb{S}_\susy(\Omega_V,\xi,\eta)\otimes\dach\V'$, 
then the following identity holds 
\begin{align}
\label{eq:ward-identity-cartesian}
\int\boldsymbol{d\mu}^W_\cart f(\sk{\alpha,x+z+iy}+\sk{\tau,\xi+i\eta})
=f(\sk{\alpha,1}). 
\end{align}
\end{lemma}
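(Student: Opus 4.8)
The plan is to exploit a continuous rotational symmetry of the super-action $\mathcal{S}_\cart$ in the cartesian coordinates, in the spirit of the standard Ward-identity argument for the $H^{2|2}$ model. The key structural observation is that the quantity $-1-x_ix_j-y_iy_j+z_iz_j-\xi_i\eta_j+\eta_i\xi_j$ appearing in $\mathcal{S}_\cart$ is the (supersymmetric) hyperbolic bilinear form evaluated on the super-vectors attached to vertices $i$ and $j$, and it is invariant under the group of isometries of the $H^{2|2}$ target space. The particular symmetry I want is the one-parameter family of ``boosts/rotations'' that mixes the constrained coordinate $z_i=\sqrt{1+x_i^2+y_i^2+2\xi_i\eta_i}$ with the combination $x_i+iy_i$ (and correspondingly shifts the Grassmann coordinates $\xi_i+i\eta_i$), applied simultaneously at every vertex with the same parameter. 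Because the same parameter is used at all vertices, each edge bilinear form is preserved, so $\mathcal{S}_\cart$ is invariant; the reference measure $\prod_i z_i^{-1}\, dx_i\,dy_i\,\partial_{\xi_i}\partial_{\eta_i}$ is the invariant Berezin measure for this action and is likewise preserved.

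First I would introduce the infinitesimal generator $D$ of this symmetry acting on superfunctions, compute its action on the coordinates (it sends $x+z$ to something proportional to itself plus lower-order terms, and crucially acts on the argument $\sk{\alpha,x+z+iy}+\sk{\tau,\xi+i\eta}$ in a controlled linear way), and verify that $\int\boldsymbol{d\mu}^W_\cart\, Dg=0$ for every admissible $g$ — this is the integration-by-parts / invariance-of-measure statement that makes the Ward identity work. Applying this to $g=f(\sk{\alpha,x+z+iy}+\sk{\tau,\xi+i\eta})$ and using that $f$ is holomorphic (so $D$ passes through $f$ as $f'$ times $D$ of the argument), I obtain a differential relation that forces the integral to be constant along the symmetry orbit. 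The value is then read off at a distinguished point on the orbit — the ``origin'' configuration where $x_i=y_i=\xi_i=\eta_i=0$ and $z_i=1$ for all $i\in V$ — at which $\mathcal{S}_\cart$ vanishes on every edge, the argument of $f$ reduces to $\sk{\alpha,1}$, and the Berezin integral collapses to $1$, yielding the right-hand side $f(\sk{\alpha,1})$.

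The cleanest route, and the one I would actually write, is to reduce to the already-established marginal version: Lemma 5.1 of \cite{disertori-merkl-rolles2017} (quoted here as \eqref{eq:int-cart-hor}) lets me pass between cartesian and horospherical representations, and Corollary 5.3 of \cite{disertori-merkl-rolles2017} together with Theorem~\ref{thm:scaling-grassmann} already encodes the invariance needed when $\tau=0$. So the strategy is to treat the Grassmann directions $\tau$ perturbatively: expand $f(\sk{\alpha,x+z+iy}+\sk{\tau,\xi+i\eta})$ in powers of the nilpotent $\sk{\tau,\xi+i\eta}$, match each order against the corresponding derivative of $f$, and show that all terms with at least one factor of $\tau$ integrate to zero while the $\tau$-independent term reproduces $f(\sk{\alpha,1})$ by the bosonic Ward identity. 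The supersymmetric cancellation of the $\tau$-terms is exactly the localization phenomenon built into the measure $\boldsymbol{d\mu}^W_\cart$.

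The main obstacle I anticipate is twofold. First, one must check that the admissibility hypothesis $f(\cdots)e^{\mathcal{S}_\cart}\in\mathbb{S}_\susy(\Omega_V,\xi,\eta)\otimes\dach\V'$ is genuinely preserved under the symmetry action (and under the expansion in $\tau$), so that the integration-by-parts identity $\int\boldsymbol{d\mu}^W_\cart\,Dg=0$ has no boundary contribution and \eqref{eq:int-cart-hor} remains applicable at each step — the decay condition is what licenses discarding total-derivative terms in both the bosonic and the Berezin integrals. Second, the bookkeeping of signs from anticommutativity when the generator $D$ acts on the odd combination $\sk{\tau,\xi+i\eta}$ must be done carefully, since $\tau_i\in\dach\V_\odd'$ anticommutes with $\xi_i,\eta_i$; keeping the holomorphic derivative $f'$ on the correct side of these odd factors is where a routine-looking computation can silently go wrong.
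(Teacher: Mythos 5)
Your overall architecture is right---reduce to the purely bosonic Ward identity $\int\boldsymbol{d\mu}^W_\cart f(\sk{\alpha,x+z+iy})=f(\sk{\alpha,1})$ (Lemma 5.2 of \cite{disertori-merkl-rolles2017}) by eliminating the odd term $\sk{\tau,\xi+i\eta}$---and this is also the structure of the paper's proof. But the step you dismiss as ``exactly the localization phenomenon built into the measure'' is precisely the step that needs an argument, and localization does not supply it. After expanding $f(w+r)=\sum_{k}\frac{1}{k!}f^{(k)}(w)\,r^k$ with $w=\sk{\alpha,x+z+iy}$ and $r=\sk{\tau,\xi+i\eta}$, you must show that $\int\boldsymbol{d\mu}^W_\cart f^{(k)}(w)\,r^k=0$ for all $k\ge 1$. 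These integrands are not supersymmetric functions of the invariant bilinear forms, so the localization principle does not apply to them, and the individual Berezin integrals involved (e.g.\ $\int\boldsymbol{d\mu}^W_\cart f''(w)\,\xi_i\eta_j$ for $i\neq j$) are generically nonzero; only the specific combination produced by $(\xi+i\eta)$ vanishes. The mechanism that makes it vanish is an internal symmetry you never isolate: the simultaneous rotation $(\xi_j,\eta_j)\mapsto(\cos\varphi\,\xi_j+\sin\varphi\,\eta_j,\,-\sin\varphi\,\xi_j+\cos\varphi\,\eta_j)$, which fixes $x,y,z$, preserves $\mathcal{S}_\cart$ and the Berezin reference measure (super Jacobian $1$), and sends $\xi+i\eta$ to $e^{-i\varphi}(\xi+i\eta)$. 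Averaging the resulting identity over $\varphi\in[0,2\pi]$ and using the mean value property of the holomorphic $f$ (extended to even Grassmann arguments by Taylor expansion in the nilpotent part) removes the $\tau$ term in one stroke; that is the paper's proof, and some such charge-conservation argument is unavoidable.

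Your first sketch does invoke a symmetry, but the wrong one for this purpose: a boost/rotation mixing $z_i$ with $x_i+iy_i$ changes the argument $\sk{\alpha,x+z+iy}$ and is the symmetry underlying the bosonic Lemma 5.2, which you are in any case citing rather than reproving. Moreover, the inference ``$\int\boldsymbol{d\mu}^W_\cart Dg=0$ forces the integral to be constant along the orbit, so read off the value at the origin configuration'' conflates invariance of the integral under a bosonic one-parameter group with localization of the integral at a point; the latter requires the odd supersymmetries, not a bosonic Ward identity. So as written the proposal has a genuine gap at its central step: the vanishing of every term carrying a factor of $\tau$ is asserted, not proved.
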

\begin{proof}
Let $\varphi\in\R$. We define $\xi^\varphi=(\xi^\varphi_j)_{j\in\tilde V}$, 
$\eta^\varphi=(\eta^\varphi_j)_{j\in\tilde V}$ by 
\begin{align}
\begin{pmatrix}\xi^\varphi_j\\ \eta^\varphi_j\end{pmatrix}
=\begin{pmatrix}\cos\varphi & \sin\varphi\\
-\sin\varphi & \cos\varphi\end{pmatrix}
\begin{pmatrix}\xi_j\\ \eta_j\end{pmatrix}. 
\end{align}
Note that ${\mathcal{S}}_\cart(x,y,\xi,\eta)
={\mathcal{S}}_\cart(x,y,\xi^\varphi,\eta^\varphi)$. Furthermore, 
the supertransformation 
$(x,y,\xi,\eta)\mapsto(x,y,\xi^\varphi,\eta^\varphi)$ has 
super Jacobian 1 and hence leaves the 
reference supermeasure $dx\, dy\, \partial_\xi\partial_\eta$ invariant. 
The assumption $f(\sk{\alpha,x+z+iy}+\sk{\tau,\xi+i\eta})e^{{\mathcal{S}}_\cart}\in
\mathbb{S}_\susy(\Omega_V,\xi,\eta)\otimes\dach\V'$ assures that all expectations 
in the following calculations exist and are finite and justifies that we 
can exchange the order of integration in \eqref{eq:average-over-zero-2pi}, 
below. It follows 
\begin{align}
\lhs\eqref{eq:ward-identity-cartesian}
= & \int\boldsymbol{d\mu}^W_\cart f(\sk{\alpha,x+z+iy}+\sk{\tau,\xi^\varphi+i\eta^\varphi})
\nonumber\\
= & \int\boldsymbol{d\mu}^W_\cart f(\sk{\alpha,x+z+iy}+e^{-i\varphi}\sk{\tau,\xi+i\eta}).
\end{align}
Consequently, 
\begin{align}
\lhs\eqref{eq:ward-identity-cartesian}
= & \frac{1}{2\pi}\int_0^{2\pi}
\int\boldsymbol{d\mu}^W_\cart f(\sk{\alpha,x+z+iy}+e^{-i\varphi}\sk{\tau,\xi+i\eta})
\, d\varphi \nonumber\\
= & \int\boldsymbol{d\mu}^W_\cart \frac{1}{2\pi}\int_0^{2\pi}
f(\sk{\alpha,x+z+iy}+e^{-i\varphi}\sk{\tau,\xi+i\eta})\, d\varphi.
\label{eq:average-over-zero-2pi}
\end{align}
Note that 
\begin{align}
g(r):=\frac{1}{2\pi}\int_0^{2\pi}
f(\sk{\alpha,x+z+iy}+e^{-i\varphi}r)\, d\varphi
-f(\sk{\alpha,x+z+iy})
\end{align}
is an analytic superfunction of $r\in\dach\V_\even'$, which vanishes for
all $r\in\R$ by the mean value theorem for holomorphic functions.
Consequently, using that $g(r)$ for $r\in\dach\V_\even'$ is defined as
a Taylor series in the nilpotent part of $\R$, we obtain 
$g(r)=0$ for all $r\in\dach\V_\even'$. This yields 
\begin{align}
\lhs\eqref{eq:ward-identity-cartesian}
= & \int\boldsymbol{d\mu}^W_\cart f(\sk{\alpha,x+z+iy}).
\end{align}
The claim \eqref{eq:ward-identity-cartesian} follows from 
Lemma 5.2 of \cite{disertori-merkl-rolles2017}. 
\end{proof}

\begin{corollary}[Ward identity for $\exp$]
\label{co:expectation-generating-fnc}
For all $\alpha\in(-\infty,0]^{\tilde V}$ and 
$\tau=(\tau_i)_{i\in V}\in(\dach\V_\odd')^V$, one has 
\begin{align}
\label{eq:expectation-generating-fnc}
\int\boldsymbol{d\mu}^W e^{\sk{\alpha, e^u(1+is)}+\sk{\tau,e^u(\overline\psi+i\psi)}}
&=e^{\sk{\alpha, 1}},
\end{align}
using the abbreviation $e^u(1+is)=(e^{u_j}(1+is_j))_{j\in\tilde V}$.
\end{corollary}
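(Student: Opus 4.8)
The plan is to recognize the exponent in \eqref{eq:expectation-generating-fnc} as the argument occurring in the Ward identity of Lemma~\ref{le:rot-sym-grassmann}, specialized to the holomorphic function $f=\exp$. First I would pass to the cartesian coordinates \eqref{eq:change-of-coordinates1}--\eqref{eq:change-of-coordinates2} and record the algebraic simplification
\begin{align}
x_i+z_i=\sinh u_i+\cosh u_i=e^{u_i},
\end{align}
where the nilpotent contributions to $x_i$ and $z_i$ cancel. Hence componentwise $x_i+z_i+iy_i=e^{u_i}(1+is_i)$ and $\xi_i+i\eta_i=e^{u_i}(\overline\psi_i+i\psi_i)$, so that summing against $\alpha$ and $\tau$ gives
\begin{align}
\sk{\alpha,x+z+iy}+\sk{\tau,\xi+i\eta}
=\sk{\alpha,e^u(1+is)}+\sk{\tau,e^u(\overline\psi+i\psi)},
\end{align}
which is exactly the exponent on the left-hand side of \eqref{eq:expectation-generating-fnc}.

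With this identification, Lemma~\ref{le:rot-sym-grassmann} applied to $f=\exp$ and the given $\tau=(\tau_i)_{i\in V}$ yields
\begin{align}
\int\boldsymbol{d\mu}^W_\cart
\exp\big(\sk{\alpha,x+z+iy}+\sk{\tau,\xi+i\eta}\big)
=\exp(\sk{\alpha,1}),
\end{align}
and translating the cartesian integral back to horospherical coordinates via \eqref{eq:int-cart-hor} produces the claimed identity \eqref{eq:expectation-generating-fnc}. The body identities $x_\delta+z_\delta=1$, $y_\delta=\xi_\delta=\eta_\delta=0$ make the pinned vertex $\delta$ consistent, so $\sk{\alpha,1}=\sum_{i\in\tilde V}\alpha_i$ as required.

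The one point that needs genuine verification---and the main obstacle---is the hypothesis of Lemma~\ref{le:rot-sym-grassmann}, namely that $\exp(\sk{\alpha,x+z+iy}+\sk{\tau,\xi+i\eta})\,e^{{\mathcal{S}}_\cart}\in\mathbb{S}_\susy(\Omega_V,\xi,\eta)\otimes\dach\V'$. I would check this using the sign of $\alpha$. Since $z_i=\sqrt{1+x_i^2+y_i^2+2\xi_i\eta_i}$ has body $\sqrt{1+x_i^2+y_i^2}\ge|x_i|$, the body of $x_i+z_i$ is nonnegative; as $\alpha_i\le 0$ and $\re\sk{\alpha,x+z+iy}$ has body $\sum_i\alpha_i\,\body(x_i+z_i)\le 0$, the scalar exponential factor is bounded in modulus. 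The required Schwartz decay of the coefficient functions then comes entirely from $e^{{\mathcal{S}}_\cart}$: writing
\begin{align}
{\mathcal{S}}_\cart=-\sum_{(i\sim j)\in\tilde E}W_{ij}\big(z_iz_j-x_ix_j-y_iy_j-1\big)+(\text{nilpotent}),
\end{align}
and using the reverse Cauchy--Schwarz inequality for timelike vectors, the Minkowski-type form satisfies $z_iz_j-x_ix_j-y_iy_j\ge 1$ on the upper hyperboloid, with the gap growing as the endpoints separate. Because the graph $\tilde G$ is connected and $\delta$ is pinned to $(x_\delta,y_\delta,z_\delta)=(0,0,1)$, this yields a nonpositive body for ${\mathcal{S}}_\cart$ decaying in all directions of $(x,y)$, so $e^{{\mathcal{S}}_\cart}$ dominates every polynomial factor. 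Expanding both exponentials into finite sums over the Grassmann monomials $\xi_I\eta_J$, $I,J\in\I_V$, with Schwartz bodies then confirms membership in $\mathbb{S}_\susy(\Omega_V,\xi,\eta)\otimes\dach\V'$ and discharges the hypothesis, completing the argument.
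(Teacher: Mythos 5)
Your proposal is correct and follows essentially the same route as the paper: apply Lemma~\ref{le:rot-sym-grassmann} with $f=\exp$, use the identities $x_j+z_j+iy_j=e^{u_j}(1+is_j)$ and $\xi_j+i\eta_j=e^{u_j}(\overline\psi_j+i\psi_j)$ from \eqref{eq:change-of-coordinates1}--\eqref{eq:change-of-coordinates2}, and verify the integrability hypothesis via $\alpha_j\le 0$ together with $\body(x_j+z_j)=\body(e^{u_j})>0$. Your verification of the hypothesis is in fact more detailed than the paper's one-line assertion (and your hyperbolic-distance argument for the decay of $e^{{\mathcal{S}}_\cart}$ is sound); the only cosmetic imprecision is calling the cancelling $\tfrac12 s_i^2 e^{u_i}$ terms in $x_i+z_i$ ``nilpotent,'' which they are not, though the cancellation itself is correct.
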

\begin{proof}
We apply Lemma~\ref{le:rot-sym-grassmann} to the function $f=\exp$. 
Note that since $\body (x_j+z_j)=\body (e^{u_j})>0$ and $\alpha_j\le 0$
the assumption 
$e^{\sk{\alpha,x+z+iy}+\sk{\tau,\xi+i\eta}}e^{{\mathcal{S}}_\cart}\in
\mathbb{S}_\susy(\Omega_V,\xi,\eta)\otimes\dach\V'$  is satisfied. Using 
\eqref{eq:change-of-coordinates1} and \eqref{eq:change-of-coordinates2}, 
we find $x_j+z_j+iy_j=e^{u_j}(1+is_j)$ and 
$\xi_j+i\eta_j=e^{u_j}(\overline\psi_j+i\psi_j)$ for $j\in\tilde V$. 
This proves the claim. 
\end{proof}

\subsection{Susy martingales}

Consider an infinite graph $ G_\infty=(V_\infty,E_\infty)$. 
As described before \eqref{eq:def-weights-W-n-vertices}, we approximate 
this infinite graph by finite graphs with wired boundary conditions 
$\tilde G_n=(\tilde V_n=V_n\cup\{\delta_n\},\tilde E_n)$ with $V_n\uparrow V_\infty$.
Let $\V_\infty$ be a vector space with a basis denoted by 
$(\overline\psi_i,\psi_i)_{i\in V_\infty}$. 
Let $\V_n\subseteq\V_\infty$ be the subspace generated by 
$(\overline\psi_i,\psi_i)_{i\in V_n}$.  
We set $\overline\psi_{\delta_n}=\psi_{\delta_n}=0$. 
Let $\pi_n:\Omega_{V_{n+1}}\to\Omega_{V_n}$ be the projection 
$((u_i,s_i)_{i\in V_{n+1}},(u_{\delta_{n+1}},s_{\delta_{n+1}})=(0,0))
\mapsto ((u_i,s_i)_{i\in V_n},(u_{\delta_n},s_{\delta_n})=(0,0))$. 
Identifying $f\in\A_{V_n}(\V_n)$ (cf.\ \eqref{eq:def-A-V})
with $f\circ\pi_n\in\A_{V_{n+1}}(\V_{n+1})$, 
we view $\A_{V_n}(\V_n)$ as a subset of $\A_{V_{n+1}}(\V_{n+1})$. 

In order to have Grassmann parameters available, we consider another 
vector space $\V_\infty'$ together with a filtration of finite-dimensional
subspaces $\V_1'\subseteq\V_2'\subseteq\V_3'\subseteq\ldots$, 
$\bigcup_{n=1}^\infty\V_n'=\V_\infty'$. For $i,j\in V_\infty$, we take weights 
$W_{ij}=W_{ji}\in(\dach\V_\infty')_\even$ such that 
$W_{ij}\in(\dach\V_n')^+_\even$ whenever $i\sim j$ is an edge in $\tilde G_n$
for some $n$ 
and $W_{ij}=0$ whenever $i$ and $j$ are not connected by an edge in the 
infinite graph $ G_\infty$.
The edges of $\tilde G_n$ are given the weights $W_{ij}^{(n)}$ defined as in 
\eqref{eq:def-weights-W-n-vertices} and \eqref{eq:def-weights-W-n-delta}. 
Let $\boldsymbol{\mu}^W_n$ denote the 
supersymmetric sigma model with Grassmann variables defined in 
\eqref{eq:def-mu-W-with-grassmann} for the graph $\tilde G_n$ with weights 
$W^{(n)}_{ij}$. 

Let $n\in\N$. Recall the definition \eqref{eq:def-varpi} of 
$\varpi^{V_n}$ and $\pi^{V_n}_{[a,b,\overline\chi,\chi]}$ 
for $[a,b,\overline\chi,\chi]\in\G(\V_n')_{V_n}$. We consider 
the joint Grassmann-Laplace transform 
\begin{align}
\label{eq:grassmann-Laplace-trafo-for-G-n}
\boldsymbol{\mathcal{L}}_n^W(a,b,\overline\chi,\chi)
= \int \boldsymbol{d\mu}^W_n 
e^{-\sk{\pi^{V_n}_{[a,b,\overline\chi,\chi]},\varpi^{V_n}}}.
\end{align}

\paragraph{Test functions.}
Following the discussion above eq. \eqref{eq:int-cart-hor} we will
consider the space $\T_{n}$ of test functions $f\in \A_{V_n}(\V_n)\otimes\dach\V'_{n}$
such that
 $e^{{\mathcal{S}}_\cart}f_\cart\in\mathbb{S}_\susy(\Omega_{{V_{n}}},\xi,\eta)\otimes\dach\V'_{n}$.

\paragraph{Functions of $\beta,\theta,\overline\phi,\phi$.}
Let $\U_n$ be a vector space with basis $(\overline\phi_i,\phi_i)_{i\in V_n}$. 
In analogy to the definition \eqref{eq:def-A-V} of $\A(\V)$, we denote by 
$\B_{V_n}(\U_n)=C^\infty(\R^{V_n}\times\R^{V_n},\dach\U_n)$ the Grassmann algebra over 
$\U_n$ where the coefficients are given by smooth real-valued functions 
$f_{IJ}\in C^\infty(\R^{V_n}\times\R^{V_n},\R)$,
$(\beta,\theta)\mapsto f_{IJ}(\beta,\theta)$. If we insert 
the functions $\beta=\beta^{V_n}(u)$, $\theta=\theta^{V_n}(u,s)$, 
$\overline\phi=\overline\phi^{V_n}(u,\overline\psi)$, and 
$\phi=\phi^{V_n}(u,\psi)$, 
cf.\ formulas \eqref{eq:beta-definition}, \eqref{eq:def-theta}, 
and \eqref{eq:def-phi}, in the representation 
\begin{align}
\label{eq:repr-f-betatheta}
f(\beta,\theta,\overline\phi,\phi)
=\sum_{I,J\in\I_{V_n}}f_{IJ}(\beta,\theta)\overline\phi_I\phi_J\in\B_{V_n}(\U_n), 
\end{align}
the superfunction in horospherical coordinates can be written as 
\begin{align}
f_{\hor} (u,s,\overline\psi,\psi)= f(\varpi^{V_n} (u,s,\overline\psi,\psi))
=\sum_{I,J\in\I_{V_n}}\tilde f_{IJ}(u,s)\overline\psi_I\psi_J. 
\end{align}
Again, these definitions extend directly to functions involving
Grassmann-dependent parameters $\B_{V_n}(\U_n)\otimes\dach\V'_{n}.$

\begin{lemma}[Consistency]\label{le:consistency-grassmann}\mbox{}\\
For $n\in\N$ and $[a,b,\overline\chi,\chi]\in\G(\V_{n+1}')_{V_{n+1}}$
with $[a_i,b_i,\overline\chi_i,\chi_i]=[1,0,0,0]$ for all 
$i\in\tilde V_{n+1}\setminus V_n$, one has 
\begin{align}
\label{eq:L-n-L-n+1}
\boldsymbol{\mathcal{L}}_n^W(a_{V_n},b_{V_n},\overline\chi_{V_n},\chi_{V_n})
=\boldsymbol{\mathcal{L}}_{n+1}^W(a,b,\overline\chi,\chi). 
\end{align}
Consequently, for any superfunction 
$f\in\B_{V_n}(\U_n)\otimes\dach\V'_{n}$ such that $f_{\hor }\in \mathcal{T}_{n}$
one has
\begin{align}
\label{eq:consistency-grassmann}
\int \boldsymbol{d\mu}^W_n f(\varpi^{V_n})
=\int\boldsymbol{d\mu}^W_{n+1} f((\varpi^{V_{n+1}})|_{V_n}).
\end{align}
Informally speaking, this means that the (super-)law of 
$\varpi^{V_n}=(\beta^{V_n},\theta^{V_n},\overline\phi^{V_n},\phi^{V_n})$ 
with respect to $\boldsymbol{\mu}^W_n$ agrees with the (super-)law of 
$\varpi^{V_{n+1}}|_{V_n}
=(\beta^{V_{n+1}},\theta^{V_{n+1}},\overline\phi^{V_{n+1}},\phi^{V_{n+1}})|_{V_n}$ 
with respect to $\boldsymbol{\mu}^W_{n+1}$. 
\end{lemma}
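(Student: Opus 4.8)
The plan is to prove the two assertions in turn, following the structure of the proof of Lemma~\ref{le:consistency}, of which the present statement is the natural extension (the case $\overline\chi=\chi=0$). First I would establish the Laplace-transform consistency \eqref{eq:L-n-L-n+1} directly from the explicit product representation \eqref{eq:def-L-grassmann} of $\boldsymbol{\mathcal{L}}^W$ furnished by Theorem~\ref{thm:scaling-grassmann}. Since $a_j=1$ for $j\in V_{n+1}\setminus V_n$, the factors $\prod_j 1/a_j$ over $V_n$ and over $V_{n+1}$ coincide, exactly as in the real case. For the edge product I would run the same three-case analysis over $(i\sim j)\in\tilde E_{n+1}$. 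The only new feature is the Grassmann bilinear $\overline\chi_i\chi_j+\overline\chi_j\chi_i$ sitting in each exponent; but whenever one endpoint lies in $\tilde V_{n+1}\setminus V_n$ the hypothesis $[a_i,b_i,\overline\chi_i,\chi_i]=[1,0,0,0]$ forces $\overline\chi_i=\chi_i=0$ there, so these terms drop out and $a_ia_j+b_ib_j+\overline\chi_i\chi_j+\overline\chi_j\chi_i-1$ collapses to $a_i-1$, precisely as in Lemma~\ref{le:consistency}. Summing the boundary edges with the weight relation \eqref{eq:def-weights-W-n-vertices}--\eqref{eq:def-weights-W-n-delta} then reproduces the $\delta_n$-edge contribution verbatim, which gives \eqref{eq:L-n-L-n+1}.

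Next I would upgrade this to the super-law identity \eqref{eq:consistency-grassmann}. Because the parameters indexed by $\tilde V_{n+1}\setminus V_n$ are trivial, the exponent $\sk{\pi^{V_{n+1}}_{[a,b,\overline\chi,\chi]},\varpi^{V_{n+1}}}$ only involves $\varpi^{V_{n+1}}|_{V_n}$, so \eqref{eq:L-n-L-n+1} states that the two super-expectations of $e^{-\sk{\pi^{V_n}_{[a,b,\overline\chi,\chi]},\varpi^{V_n}}}$ and of $e^{-\sk{\pi^{V_n}_{[a,b,\overline\chi,\chi]},\varpi^{V_{n+1}}|_{V_n}}}$ agree for all admissible $[a,b,\overline\chi,\chi]\in\G(\V'_{n+1})_{V_{n+1}}$. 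I would then argue that this exponential family determines the functional $f\mapsto\int\boldsymbol{d\mu}^W f(\varpi)$ on $\B_{V_n}(\U_n)\otimes\dach\V'_n$. Writing $f=\sum_{I,J\in\I_{V_n}}f_{IJ}(\beta,\theta)\overline\phi_I\phi_J$, the dependence on the odd variables $\overline\phi,\phi$ is polynomial and is recovered by Grassmann-differentiating the exponential in the odd parameters $\overline\chi,\chi$ and then setting them to $0$; the dependence on the even variables $(\beta,\theta)$ is controlled by the remaining factor $e^{-\sk{(a^2+b^2-1)_{V_n},\beta^W}-\sk{b_{V_n},\theta^W}}$, an honest Laplace transform in $(a^2+b^2-1,b)$ near the origin. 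As in the final step of Lemma~\ref{le:consistency}, agreement on a real neighborhood extends by analytic continuation to the characteristic-function version and hence, by uniqueness, identifies the corresponding $\dach\V'_n$-valued distributions of $(\beta,\theta)$; summing over $I,J$ then yields \eqref{eq:consistency-grassmann} for every $f$ with $f_\hor\in\T_n$.

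I expect the main obstacle to be precisely this last \emph{generating-function-determines-super-law} step, where two points need care. First, the even Laplace parameter in \eqref{eq:def-varpi} is $a_i^2+b_i^2+2\overline\chi_i\chi_i-1$ rather than $a_i^2+b_i^2-1$, so differentiation in the odd parameters also acts on the nilpotent piece $2\overline\chi_i\chi_i$ inside the even exponent; extracting a prescribed monomial $\overline\phi_I\phi_J$ therefore produces, besides the desired term, lower-order contributions carrying extra factors of $\beta^W$. These are handled by a triangular recursion in the number of odd factors: the top odd-monomial coefficient recovers the highest moment directly, and subtracting the already-identified lower contributions recovers the rest. Second, the integrals take values in the finite-dimensional algebra $\dach\V'_n$, so the uniqueness and analytic-continuation arguments must be run componentwise in a basis of $\dach\V'_n$, with all smooth functions of even nilpotent arguments read as their Taylor series in the nilpotent part; the standing hypothesis $f_\hor\in\T_n$ guarantees the integrability needed both to differentiate under the integral and to perform these continuations.
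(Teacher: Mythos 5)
Your proposal is correct and follows the same overall skeleton as the paper: the explicit product formula \eqref{eq:def-L-grassmann} plus the three-case edge analysis for \eqref{eq:L-n-L-n+1}, then Grassmann differentiation in $(\overline\chi,\chi)$ to reduce \eqref{eq:consistency-grassmann} to monomial identities, followed by Laplace-transform uniqueness and analytic continuation. The one place where you genuinely diverge is the step you correctly identify as the main obstacle. You keep $(a,b,\overline\chi,\chi)$ as independent variables, so differentiating the exponent $-\sk{(a^2+b^2+2\overline\chi\chi-1)_{V_n},\beta}-\cdots$ in the odd parameters produces, besides $\overline\phi_I\phi_J$, cross terms of the form $\prod_{i\in I\cap J}(\overline\phi_i\phi_i-2\beta_i)$, which you resolve by induction on $|I|+|J|$ (the $\beta_i$ insertions being $\partial_{\lambda_i}$-derivatives of already-matched lower-order Laplace transforms). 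This works, but the paper sidesteps the recursion entirely by reparametrizing: it takes $c:=a^2+b^2+2\overline\chi\chi-1$, $b$, $\overline\chi$, $\chi$ as the independent variables and views $a=\sqrt{c-b^2-2\overline\chi\chi+1}$ as a derived quantity, so the exponent $-\sk{c,\beta}-\sk{b,\theta}-\sk{\overline\chi,\phi}-\sk{\overline\phi,\chi}$ is affine in the odd parameters and the Grassmann derivatives yield the clean monomial identities \eqref{eq:identity-with-monomials} in one stroke. (This reparametrization is also the reason the paper insists on Grassmann-even, non-real values of $a$.) Two further points where the paper is more careful than your sketch: it explicitly verifies that the exponential test function $e^{-\sk{\pi^{V_n},\varpi^{V_n}}}$ has $f_\hor\in\T_n$, via the decomposition of $\mathcal{S}_\cart$ in terms of $H_{\tilde\beta}$ and the bound with $c=\min_j\body(a_j^2)>0$ — you should not simply invoke the standing hypothesis for this particular $f$, since it is not given to lie in $\T_n$ a priori; and it handles Grassmann-valued weights by first assuming $W_{ij}\in\R$ (so that $\beta,\theta$ are honestly real and the classical uniqueness theorem applies) and then continuing analytically in the weights, which is somewhat cleaner than running the uniqueness argument componentwise in a basis of $\dach\V_n'$ as you propose, though both routes are viable.
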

\begin{proof}
Using the expression \eqref{eq:def-L-grassmann} for the Grassmann-Laplace transform, 
the proof of \eqref{eq:L-n-L-n+1} is in complete analogy with the proof of 
Lemma~\ref{le:consistency}, using Theorem~\ref{thm:scaling-grassmann}
as the analogue of Theorem~\ref{thm:scaling} and replacing
expressions of the form $a_ia_j+b_ib_j-1$ originating from
formula \eqref{eq:claim-laplace-trafo}
by expressions 
$a_ia_j+b_ib_j+\overline{\chi}_i\chi_j+\overline{\chi}_j\chi_i-1$,
appearing in formula \eqref{eq:def-L-grassmann}.

To prove  \eqref{eq:consistency-grassmann}, we consider first
the special case 
$f(\varpi^{V_n})=e^{-\sk{\pi^{V_n}_{[a,b,\overline\chi,\chi]},\varpi^{V_n}}}$. We claim $f_{\hor }\in \T_{n}.$
Indeed note that replacing $u$ in $\beta (u)$ with $u=u (x,y,\xi,\eta)$ 
we can write (cf.\ Lemma~\ref{le:expr1-rho}) 
\begin{align}
{\mathcal{S}}_\cart (x,y,\xi,\eta) &=
-\frac{1}{2} \sk{1_{\tilde{V}_{n}},H_{\tilde{\beta}}1_{\tilde{V}_{n}}}-\frac{1}{2}\sk{y,H_{\tilde{\beta}}y}
-\sk{\xi,H_{\tilde{\beta } }\eta},\\
-\sk{\pi^{V_n}_{[a,b,\overline\chi,\chi]},\varpi^{V_n}}&= -{\mathcal{S}}_\cart +C_{W} (a,b,\overline\chi,\chi)
\\
&-
\frac{1}{2}\sk{a,H_{\tilde{\beta}}a} - \frac{1}{2} \sk{(y+b),H_{\tilde{\beta}} (y+b)} -
\sk{(\xi+\overline\chi ), H_{\tilde{\beta}} (\eta +\chi)}\nonumber
\end{align}
where
\begin{equation}
C_{W} (a,b,\overline\chi,\chi):= \sum_{(i\sim j)\in \tilde{E}_{n}} W_{ij}
\left[1-a_{i}a_{j}-b_{i}b_{j}-\overline\chi_{i}\chi_{j}- \overline\chi_{j}\chi_{i}  \right] 
\end{equation}
is a constant in $(\V'_{n})_{\even}.$ Letting $c:=\min\{ \body (a_{j}^{2}): j\in \tilde{V}_{n}\}>0$
we have
\begin{equation}
e^{{\mathcal{S}}_\cart}f_\cart (x,y,\xi,\eta) = e^{c{\mathcal{S}}_\cart (x,y+b,\xi+\overline\chi, \eta+\chi)}
e^{F (x,y,\xi,\eta)} e^{C_{W} (a,b,\overline\chi,\chi)}
\end{equation}
where $\body F(x,y,\xi,\eta) \leq 0,$ and all derivatives of $F$ of any order in $x,y,\xi, \eta$
are algebraic functions of these variables without singularities. Hence
$e^{{\mathcal{S}}_\cart}f_\cart\in\mathbb{S}_\susy(\Omega_{{V_{n}}},\xi,\eta)\otimes\dach\V'_{n}.$

For the special case 
$f(\varpi^{V_n})=e^{-\sk{\pi^{V_n}_{[a,b,\overline\chi,\chi]},\varpi^{V_n}}}$ claim
\eqref{eq:consistency-grassmann} reads
\begin{align}
\label{eq:consistency-grassmann2}
\int \boldsymbol{d\mu}^W_n e^{-\sk{\pi^{V_n}_{[a,b,\overline\chi,\chi]},\varpi^{V_n}}}
=\int\boldsymbol{d\mu}^W_{n+1} 
e^{-\sk{\pi^{V_n}_{[a,b,\overline\chi,\chi]},\varpi^{V_{n+1}}|_{V_n}}}.
\end{align}
This formula is just another way of writing equation \eqref{eq:L-n-L-n+1}.
For the remainder of this proof, we consider $c:=a^2+b^2+2\overline\chi\chi-1$, 
$b$, $\overline\chi$, $\chi$ rather than $a$, $b$, $\overline\chi$, $\chi$
as our list of independent variables, viewing $a=\sqrt{c-b^2-2\overline\chi\chi+1}$
as a function of $(c,b,\overline\chi,\chi)$. This makes sense as long as 
$\body(c-b^2)>-1$. We take all iterated Grassmann derivatives of the form 
$\prod_{k=1}^m\partial_{\chi_{i_k}}
\prod_{\overline k=1}^{\overline m}\partial_{\overline\chi_{\overline i_{\overline k}}}$ 
with $i_k,\overline i_{\overline k}\in V_n$
in equation \eqref{eq:consistency-grassmann2}. Afterwards, we set 
$\chi=0$ and $\overline\chi=0$. For $I,J\in\I_{V_n}$, we obtain 
\begin{align}
\int \boldsymbol{d\mu}^W_n \overline\phi^{V_n}_I\phi^{V_n}_J
e^{-\sk{c_{V_n},\beta^{V_n}}-\sk{b_{V_n},\theta^{V_n}}}
=&\int\boldsymbol{d\mu}^W_{n+1} \overline\phi^{V_{n+1}}_I\phi^{V_{n+1}}_J
e^{-\sk{c_{V_n},\beta^{V_{n+1}}|_{V_n}}-\sk{b_{V_n},\theta^{V_{n+1}}|_{V_n}}}
\label{eq:identity-with-monomials}
\end{align}
for any Grassmann monomial $g$. 
Note that the identity \eqref{eq:identity-with-monomials} holds in particular 
for all real $b,c$ in a neighborhood of the origin. 

For a general function assume first 
the weights $W_{ij}$ take only real values. 
Then, $\beta$ and $\theta$ take only real values because the integration
variables $u$ and $s$ take real values. Hence, using the uniqueness
theorem for Laplace transforms and the representation \eqref{eq:repr-f-betatheta}
of the superfunction $f$, the claim \eqref{eq:consistency-grassmann} follows 
under our additional assumption $W_{ij}\in\R$; note that the hypothesis  
$f\in\B_{V_n}(\U_n)\otimes\dach\V'_{n}$ with $f_{\hor }\in \T_{n}$
provides the necessary 
integrability. Because both sides of the claim \eqref{eq:consistency-grassmann} 
are analytic superfunctions in the weights $W_{ij}$, the claim follows also 
in the general case. 
\end{proof}

We remark that in the above proof, it is essential to allow the 
scaling parameters $a$ to take values in the even part of a Grassmann algebra
rather than taking only real values, because we have written 
$a=\sqrt{c-b^2-2\overline\chi\chi+1}$ with real $c$ and $b$ and 
Grassmann variables $\overline\chi$ and $\chi$. 

For $\alpha\in(-\infty,0]^{(V_\infty)}$ we use again the definition of 
$\alpha^{(n)}$ given in formula \eqref{eq:def-a-n}. On the contrary, 
given $\tau=(\tau_i)_{i\in V_\infty}$ such that 
$\tau_i\in(\dach\V_n')_\odd$ for all $n\in\N$ and $i\in V_n$,
we denote by $\tau^{(n)}$ the restriction 
of $\tau$ to $V_n$. Note that $\dach\V_n'\subseteq\dach\V_{n+1}'$.

The following theorem is an extension of the martingale property stated in 
Theorem~\ref{thm:generating-mg}.

\begin{theorem}
\label{thm:generating-mg-with-grassmann}
For $n\in\N$, $\alpha\in(-\infty,0]^{(V_\infty)}$, and 
$\tau=(\tau_i)_{i\in V_\infty}$ as above, let
\begin{align}
\label{eq:def-M-n-theta-with-grassmann}
M^{(n)}_{\alpha,\tau}=
M^{(n)}_{\alpha,\tau}(u,s,\overline\psi,\psi)=
e^{\sk{\alpha^{(n)},e^u(1+is)}+\sk{\tau_{V_n},e^u(\overline\psi+i\psi)}}.
\end{align}
For any  test superfunction $g\in\B_{V_n}(\U_n)\otimes\dach\V'_{n}$
with $g_{\hor }\in\T_{n}$, 
one has
\begin{align}
\int \boldsymbol{d\mu}_{n+1}^W M^{(n+1)}_{\alpha,\tau}g(\varpi^{V_{n+1}}|_{V_n})
=\int \boldsymbol{d\mu}_n^W M^{(n)}_{\alpha,\tau}g(\varpi^{V_n}).
\label{eq:claim-generating-mg-test-fn}
\end{align}
\end{theorem}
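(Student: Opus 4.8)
The plan is to mirror the proof of Theorem~\ref{thm:generating-mg}, replacing the scaling Theorem~\ref{thm:scaling} by its Grassmann version Theorem~\ref{thm:scaling-grassmann} and the Ward identity by Corollary~\ref{co:expectation-generating-fnc}. First I would reduce the claim \eqref{eq:claim-generating-mg-test-fn} to the special generating test superfunctions $g=e^{-\sk{\pi^{V_n}_{[a,b,\overline\chi,\chi]},\varpi^{V_n}}}$, where $[a,b,\overline\chi,\chi]\in\G(\V_{n+1}')_{V_{n+1}}$ has trivial components on $\tilde V_{n+1}\setminus V_n$. The passage from these exponentials to an arbitrary $g\in\B_{V_n}(\U_n)\otimes\dach\V'_n$ is then carried out exactly as in the proof of Lemma~\ref{le:consistency-grassmann}: differentiate in the Grassmann parameters $\overline\chi,\chi$ and set them to $0$ to produce the monomials $\overline\phi_I\phi_J$, invoke uniqueness of the real Laplace transform in $(c,b)=(a^2+b^2+2\overline\chi\chi-1,b)$ when the weights are real so that $\beta,\theta$ are real-valued, and finish by analytic continuation in the weights $W_{ij}$. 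The only new feature is the bounded \emph{even} factor $M^{(n)}_{\alpha,\tau}$ (resp.\ $M^{(n+1)}_{\alpha,\tau}$) riding inside the integral; being even and independent of $\overline\chi,\chi$, it commutes through the Grassmann differentiations without sign changes, and the hypotheses $g_\hor\in\T_n$ guarantee the integrability needed for the uniqueness step.

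For the special case, extend $[a,b,\overline\chi,\chi]$ by triviality so that on $\tilde V_{n+1}$ one has $\sk{\pi^{V_{n+1}}_{[a,b,\overline\chi,\chi]},\varpi^{V_{n+1}}}=\sk{\pi^{V_n}_{[a,b,\overline\chi,\chi]},\varpi^{V_{n+1}}|_{V_n}}$; the left-hand side of \eqref{eq:claim-generating-mg-test-fn} becomes $\int\boldsymbol{d\mu}^W_{n+1}M^{(n+1)}_{\alpha,\tau}e^{-\sk{\pi^{V_{n+1}}_{[a,b,\overline\chi,\chi]},\varpi^{V_{n+1}}}}$. Applying Theorem~\ref{thm:scaling-grassmann} with $f=M^{(n+1)}_{\alpha,\tau}$ rewrites this as $\boldsymbol{\mathcal{L}}^W_{n+1}(a,b,\overline\chi,\chi)\int\boldsymbol{d\mu}^{W^a}_{n+1}\sstar_{[a,b,\overline\chi,\chi]}M^{(n+1)}_{\alpha,\tau}$. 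The central computation is to evaluate $\sstar_vM^{(n+1)}_{\alpha,\tau}$ from the explicit shifts \eqref{eq:def-strich}: componentwise one gets $e^{u_j'}(1+is_j')=a_je^{u_j}(1+is_j)-ib_j$ and $e^{u_j'}(\overline\psi_j'+i\psi_j')=a_je^{u_j}(\overline\psi_j+i\psi_j)-(\overline\chi_j+i\chi_j)$, whence
\[
\sstar_vM^{(n+1)}_{\alpha,\tau}
=e^{\sk{a\alpha^{(n+1)},e^u(1+is)}+\sk{a\tau_{V_{n+1}},e^u(\overline\psi+i\psi)}}\,
e^{-i\sk{\alpha^{(n+1)},b}-\sk{\tau_{V_{n+1}},\overline\chi+i\chi}},
\]
with $v=[a,b,\overline\chi,\chi]$ and the second factor a constant independent of the integration variables (here $a_j$ even commutes with $\tau_j$, and even elements are central, so the two exponentials split).

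Pulling out this constant and applying Corollary~\ref{co:expectation-generating-fnc}, in its extension to the measure $\boldsymbol{\mu}^{W^a}_{n+1}$ with Grassmann-even weights $W^a$ and even parameter $a\alpha^{(n+1)}$ of nonpositive body (justified by the same analytic continuation in the nilpotent parts that underlies Theorem~\ref{thm:scaling-grassmann}), gives $\int\boldsymbol{d\mu}^{W^a}_{n+1}e^{\sk{a\alpha^{(n+1)},e^u(1+is)}+\sk{a\tau_{V_{n+1}},e^u(\overline\psi+i\psi)}}=e^{\sk{\alpha^{(n+1)},a}}$. Thus the left-hand side equals $\boldsymbol{\mathcal{L}}^W_{n+1}(a,b,\overline\chi,\chi)\,e^{\sk{\alpha^{(n+1)},a-ib}-\sk{\tau_{V_{n+1}},\overline\chi+i\chi}}$, and the right-hand side is the same expression with $n+1$ replaced by $n$. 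By Lemma~\ref{le:consistency-grassmann} the two Grassmann-Laplace transforms agree; the bosonic exponents match as in \eqref{eq:sk-1} (using $[a_i,b_i]=[1,0]$ off $V_n$), and the fermionic exponents match because $\overline\chi_i=\chi_i=0$ off $V_n$. This settles the special case.

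I expect the main obstacle to be not the algebra, which parallels Theorem~\ref{thm:generating-mg}, but the reduction from exponential to general $g$ in the presence of the weight $M^{(n)}_{\alpha,\tau}$: one must verify that the Laplace-uniqueness and analytic-continuation arguments of Lemma~\ref{le:consistency-grassmann} survive the insertion of $M$. This rests on $M^{(n)}_{\alpha,\tau}$ being bounded in body (true since $\alpha\le0$ forces $\re\sk{\alpha,e^u(1+is)}=\sk{\alpha,e^u}\le0$) and on convergence of the relevant superintegrals, which is precisely where the class condition $g_\hor\in\T_n$ (equivalently $e^{\mathcal{S}_\cart}f_\cart\in\mathbb{S}_\susy$) and the corresponding decay property of $M$, checked as in Corollary~\ref{co:expectation-generating-fnc}, are used to legitimize the applications of Theorem~\ref{thm:scaling-grassmann} and Corollary~\ref{co:expectation-generating-fnc}.
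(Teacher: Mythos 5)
Your proposal is correct and follows essentially the same route as the paper's own proof: reduce to the generating exponentials $g=e^{-\sk{\pi^{V_n}_{[a,b,\overline\chi,\chi]},\varpi^{V_n}}}$, apply Theorem~\ref{thm:scaling-grassmann} with $f=M^{(n+1)}_{\alpha,\tau}$, compute $\sstar_v M^{(n+1)}_{\alpha,\tau}$ exactly as in \eqref{eq:S-star-M}, invoke the Ward identity of Corollary~\ref{co:expectation-generating-fnc} and Lemma~\ref{le:consistency-grassmann}, and pass to general $g$ by the Grassmann-differentiation/Laplace-uniqueness/analytic-continuation argument of Lemma~\ref{le:consistency-grassmann} with the supermeasure weighted by $M^{(k)}_{\alpha,\tau}$. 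Your explicit flagging of the integrability and analytic-continuation issues in the last two steps is consistent with (and slightly more careful than) what the paper records.
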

Note that in \eqref{eq:def-M-n-theta-with-grassmann} we need a definition for 
$\alpha_{\delta_n}$ because $e^{u_{\delta_n}}(1+is_{\delta_n})=1$. In contrast 
to this, $e^{u_{\delta_n}}(\overline\psi_{\delta_n}+i\psi_{\delta_n})=0$, 
hence no definition of $\tau_{\delta_n}$ is needed. 

\medskip\noindent
\begin{proof}[Proof of Theorem~\ref{thm:generating-mg-with-grassmann}]
The proof is in complete analogy to the proof of Theorem 
\ref{thm:generating-mg}, with an extended set of variables. 

We consider first the special case 
$g(\varpi^{V_n})=e^{-\sk{\pi^{V_n}_{[a,b,\overline\chi,\chi]},\varpi^{V_n}}}$
with $[a,b,\overline\chi,\chi]\in\bG(\V_n')_{V_n}$. Note 
that with this choice $g_{\hor }\in\T_{n}$. Now, set 
$[a_i,b_i,\overline\chi_i,\chi_i]=[1,0,0,0]$ for 
$i\in\tilde V_{n+1}\setminus V_n$. The fact 
$\sk{\pi^{V_n}_{[a,b,\overline\chi,\chi]},\varpi^{V_{n+1}}|_{V_n}}=
\sk{\pi^{V_{n+1}}_{[a,b,\overline\chi,\chi]},\varpi^{V_{n+1}}}$ and 
equation \eqref{eq:claim-thm-image-measure-grassmann} 
from Theorem~\ref{thm:scaling-grassmann} yield
\begin{align}
& \int \boldsymbol{d\mu}^W_{n+1} M^{(n+1)}_{\alpha,\tau}
e^{-\sk{\pi^{V_n}_{[a,b,\overline\chi,\chi]},\varpi^{V_{n+1}}|_{V_n}}}
=\int \boldsymbol{d\mu}^W_{n+1} M^{(n+1)}_{\alpha,\tau}
e^{-\sk{\pi^{V_{n+1}}_{[a,b,\overline\chi,\chi]},\varpi^{V_{n+1}}}} \nonumber\\
=& \boldsymbol{\mathcal{L}}_{n+1}^W(a,b,\overline\chi,\chi)
\int\boldsymbol{d\mu}_{n+1}^{W^a}\sstar_{[a,b,\overline\chi,\chi]}M^{(n+1)}_{\alpha,\tau}. 
\label{eq:int-susy-mg-intermediate}
\end{align}
The following calculation is analogous to formula \eqref{eq:M-n-circ-S}:
\begin{align}
\sstar_{[a,b,\overline\chi,\chi]}M^{(n+1)}_{\alpha,\tau}
=&\exp\left(\sk{\alpha^{(n+1)},e^{u+\log a}(1+i(s-e^{-u-\log a}b))}\right)\cdot\nonumber\\
&\exp\left(\sk{\tau^{(n+1)},e^{u+\log a}(\overline\psi-e^{-u-\log a}\overline\chi 
+i(\psi-e^{-u-\log a}\chi))}\right)\nonumber\\
= & e^{\sk{a\alpha^{(n+1)},e^u(1+is)}+\sk{a\tau^{(n+1)},e^u(\overline\psi+i\psi)}}
e^{-\sk{\alpha^{(n+1)},ib}-\sk{\tau^{(n+1)},\overline\chi+i\chi}}.
\label{eq:S-star-M}
\end{align}
Inserting this in \eqref{eq:int-susy-mg-intermediate} and using 
the Ward identity from Corollary~\ref{co:expectation-generating-fnc}, 
we obtain the following analog of the calculation from formula 
\eqref{eq:lhs-exp-mg1} to \eqref{eq:lhs-exp-mg2}: 
\begin{align}
&\int \boldsymbol{d\mu}^W_{n+1} M^{(n+1)}_{\alpha,\tau}
e^{-\sk{\pi^{V_{n+1}}_{[a,b,\overline\chi,\chi]},\varpi^{V_{n+1}}}} 
\nonumber\\
=&\boldsymbol{\mathcal{L}}_{n+1}^W(a,b,\overline\chi,\chi)
e^{-\sk{\alpha^{(n+1)},ib}-\sk{\tau^{(n+1)},\overline\chi+i\chi}}
\int\boldsymbol{d\mu}_{n+1}^{W^a}
e^{\sk{a\alpha^{(n+1)},e^u(1+is)}+\sk{a\tau^{(n+1)},e^u(\overline\psi+i\psi)}}
\nonumber\\
=& \boldsymbol{\mathcal{L}}_{n+1}^W(a,b,\overline\chi,\chi)
e^{-\sk{\alpha^{(n+1)},ib}-\sk{\tau^{(n+1)},\overline\chi+i\chi}}
e^{\sk{a\alpha^{(n+1)}, 1}}\nonumber\\
=& \boldsymbol{\mathcal{L}}_{n+1}^W(a,b,\overline\chi,\chi)
e^{\sk{\alpha^{(n+1)},a-ib}-\sk{\tau^{(n+1)},\overline\chi+i\chi}}.
\end{align}
In the same way, replacing $n+1$ by $n$ yields 
\begin{align}
\int \boldsymbol{d\mu}^W_n M^{(n)}_{\alpha,\tau} 
e^{-\sk{\pi^{V_n}_{[a,b,\overline\chi,\chi]},\varpi^{V_n}}}
=\boldsymbol{\mathcal{L}}_n^W((a,b,\overline\chi,\chi)_{V_n})
e^{\sk{\alpha^{(n)},a-ib}-\sk{\tau^{(n)},\overline\chi+i\chi}}.
\end{align}
The consistency result from Lemma~\ref{le:consistency-grassmann}
can be written in the form 
$\boldsymbol{\mathcal{L}}_{n+1}^W(a,b,\overline\chi,\chi)
=\boldsymbol{\mathcal{L}}_n^W((a,b,\overline\chi,\chi)_{V_n})$. 
Identity \eqref{eq:sk-1} states 
$\sk{\alpha^{(n+1)},a-ib}=\sk{\alpha^{(n)},a-ib}$. Finally,
using $\overline\chi_j=\chi_j=0$ for all $j\in\tilde V_{n+1}\setminus V_n$,
we obtain 
\begin{align}
\sk{\tau^{(n+1)},\overline\chi+i\chi}
=\sum_{j\in V_{n+1}}\tau^{(n+1)}_j(\overline\chi_j+i\chi_j)
=\sum_{j\in V_n}\tau^{(n)}_j(\overline\chi_j+i\chi_j)
=\sk{\tau^{(n)},\overline\chi+i\chi}.
\end{align}
It follows that 
\begin{align}
& \int \boldsymbol{d\mu}^W_{n+1} M^{(n+1)}_{\alpha,\tau}
e^{-\sk{\pi^{V_n}_{[a,b,\overline\chi,\chi]},\varpi^{V_{n+1}}|_{V_n}}}
=\int \boldsymbol{d\mu}^W_n M^{(n)}_{\alpha,\tau} 
e^{-\sk{\pi^{V_n}_{[a,b,\overline\chi,\chi]},\varpi^{V_n}}}. 
\end{align}
Using the same argument as in the proof of Lemma 
\ref{le:consistency-grassmann}, replacing the supermeasure 
$\boldsymbol{d\mu}^W_k$, $k\in\{n,n+1\}$, by 
$\boldsymbol{d\mu}^W_k M^{(k)}_{\alpha,\tau}$, the claim 
\eqref{eq:claim-generating-mg-test-fn} follows for any superfunction 
$g\in\B_{V_n}(\U_n)\otimes\dach\V'_{n}$ with $g_{\hor }\in\T_{n}.$ 
\end{proof}

\begin{corollary}
\label{eq:cor-derivatives-are-susy-mgs}
For $n,k,m\in\N$ and $j_1,\ldots,j_k,l_1,\ldots,l_m\in V_{n+1}$, let 
\begin{align}
M_{j_1,\ldots,j_k,l_1,\ldots,l_m}^{(n)}
=\prod_{p=1}^k e^{u_{j_p}^{(n)}}(1+is_{j_p}^{(n)})
\prod_{q=1}^m e^{u_{l_q}^{(n)}}(\overline\psi_{l_q}+i\psi_{l_q}^{(n)}). 
\end{align}
For any superfunction $g\in\B_{V_n}(\U_n)\otimes\dach\V'_{n}$ with 
$g_{\hor }\in\T_{n}$, one has
\begin{align}
\int \boldsymbol{d\mu}_{n+1}^W M_{j_1,\ldots,j_k,l_1,\ldots,l_m}^{(n+1)}
g(\varpi^{V_{n+1}}|_{V_n})
=\int \boldsymbol{d\mu}_n^W M_{j_1,\ldots,j_k,l_1,\ldots,l_m}^{(n)}g(\varpi^{V_n}).
\label{eq:claim-cor-mg-test-fn}
\end{align}
The same holds for the real and imaginary part of 
$M_{j_1,\ldots,j_k,l_1,\ldots,l_m}^{(n)}$. 
\end{corollary}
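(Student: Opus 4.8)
The plan is to realize each monomial $M^{(n)}_{j_1,\ldots,j_k,l_1,\ldots,l_m}$ as a mixed parameter-derivative of the generating martingale $M^{(n)}_{\alpha,\tau}$ of Theorem~\ref{thm:generating-mg-with-grassmann}, in exact analogy with how Corollary~\ref{eq:cor-derivatives-are-mgs} was deduced from Theorem~\ref{thm:generating-mg}. Write $M^{(n)}_{\alpha,\tau}=e^{A}\,e^{T}$ with $A=\sk{\alpha^{(n)},e^u(1+is)}$ and $T=\sk{\tau^{(n)},e^u(\overline\psi+i\psi)}$; here $A$ is even and $\tau$-independent, while $T$ is a sum of even nilpotent terms, so that $e^{T}=\prod_{l}\bigl(1+\tau_l\,e^{u_l}(\overline\psi_l+i\psi_l)\bigr)$ is a polynomial in the odd parameters $\tau_l$. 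Applying the one-sided real derivatives $\partial_{\alpha_{j_1}}\cdots\partial_{\alpha_{j_k}}$ and the Grassmann derivatives $\partial_{\tau_{l_1}}\cdots\partial_{\tau_{l_m}}$ and then evaluating at $\alpha=0$, $\tau=0$ yields, up to an overall sign $\sigma\in\{+1,-1\}$ arising solely from reordering the odd factors,
\begin{align*}
\partial_{\alpha_{j_1}}\cdots\partial_{\alpha_{j_k}}\,
\partial_{\tau_{l_1}}\cdots\partial_{\tau_{l_m}}\,
M^{(n)}_{\alpha,\tau}\Big|_{\alpha=0,\,\tau=0}
=\sigma\,M^{(n)}_{j_1,\ldots,j_k,l_1,\ldots,l_m}.
\end{align*}
The two groups of derivatives commute because the prefactor $\prod_p e^{u_{j_p}}(1+is_{j_p})$ produced by the $\alpha$-derivatives is even, and $\sigma$ depends only on the combinatorics of the indices $l_1,\ldots,l_m$, hence is the same at levels $n$ and $n+1$.

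I would then apply the operator $\partial_{\alpha_{j_1}}\cdots\partial_{\alpha_{j_k}}\partial_{\tau_{l_1}}\cdots\partial_{\tau_{l_m}}$, evaluated at $\alpha=0$, $\tau=0$, to both sides of the identity \eqref{eq:claim-generating-mg-test-fn}, which holds for the fixed test superfunction $g$. The Grassmann derivatives are purely algebraic: after Berezin integration in $(\overline\psi,\psi)$ each superintegral becomes a finite family of ordinary integrals whose integrands are polynomials in the $\tau_l$, so extracting the coefficient of $\tau_{l_1}\cdots\tau_{l_m}$ commutes with integration with no analytic hypothesis. For the real derivatives $\partial_{\alpha_{j_p}}$ at $\alpha=0$ one must differentiate under the superintegral sign; after the same decomposition into ordinary-integral components this is the dominated-convergence argument already used in Corollary~\ref{eq:cor-derivatives-are-mgs}. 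Concretely, for $\alpha\in(-\infty,0]^{(V_\infty)}$ the complex factor $e^{A}$ satisfies $|e^{A}|=e^{\sk{\alpha^{(n)},e^{u}}}\le 1$, so all $\alpha$-derivatives of $M^{(n)}_{\alpha,\tau}$ are bounded in modulus, uniformly in $\alpha$, by the modulus of the corresponding monomial, which provides the required domination.

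The integrability of these dominating monomials is guaranteed by the hypothesis $g_{\hor}\in\T_n$: passing to cartesian coordinates through \eqref{eq:int-cart-hor} and using the identities $x_j+z_j+iy_j=e^{u_j}(1+is_j)$ and $\xi_j+i\eta_j=e^{u_j}(\overline\psi_j+i\psi_j)$ (as in the proof of Corollary~\ref{co:expectation-generating-fnc}), the monomial $M^{(n)}_{j_1,\ldots,j_k,l_1,\ldots,l_m}$ becomes a polynomial in $x,y,z,\xi,\eta$; multiplying the integrand $e^{{\mathcal{S}}_\cart}g_\cart$ by such a polynomial leaves it in $\mathbb{S}_\susy(\Omega_{V_n},\xi,\eta)\otimes\dach\V'_n$, so every integral converges and the interchanges are legitimate. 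Carrying out the differentiation therefore transforms \eqref{eq:claim-generating-mg-test-fn} into $\sigma$ times \eqref{eq:claim-cor-mg-test-fn}, and the common sign $\sigma$ cancels. The boundary vertices require no special care: differentiating the $\delta_n$-contribution $\alpha^{(n)}_{\delta_n}e^{u_{\delta_n}}(1+is_{\delta_n})$ reproduces exactly the extension convention $u^{(n)}_i=s^{(n)}_i=0$ for $i\notin V_n$ from \eqref{eq:extend-u-n}, just as in Corollary~\ref{eq:cor-derivatives-are-mgs}.

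Finally, the statement for the real and imaginary parts follows because the supermeasure $\boldsymbol{d\mu}^W_k$ and the test superfunction $g$ carry real (coefficient) structure---their coefficients lie in $C^\infty(\cdot,\R)$ tensored with the real Grassmann algebra $\dach\V'_n$---so complex conjugation, and hence the projections onto real and imaginary parts of the complex coefficients, commute with the superintegral. Splitting $M^{(n)}_{j_1,\ldots,j_k,l_1,\ldots,l_m}=\re M^{(n)}_{j_1,\ldots,j_k,l_1,\ldots,l_m}+i\,\im M^{(n)}_{j_1,\ldots,j_k,l_1,\ldots,l_m}$ in \eqref{eq:claim-cor-mg-test-fn} then gives the identity separately for the real and the imaginary parts. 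The one genuine analytic point in the whole argument is the justification of differentiation under the superintegral in the real variables $\alpha$ at the boundary value $\alpha=0$; everything involving the Grassmann parameters $\tau$ is algebraic and poses no difficulty.
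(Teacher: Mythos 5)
Your proposal is correct and follows essentially the same route as the paper, whose own proof is a one-line remark that the claim follows by Taylor expanding \eqref{eq:claim-generating-mg-test-fn} in $\alpha$ and $\tau$ (in analogy with Corollary~\ref{eq:cor-derivatives-are-mgs}). You simply make explicit the details the paper leaves implicit --- the sign $\sigma$ from reordering the odd derivatives cancelling between the two levels, the domination $|e^{\sk{\alpha^{(n)},e^u(1+is)}}|\le 1$ justifying differentiation under the superintegral at $\alpha=0$, the purely algebraic nature of the $\tau$-derivatives, and the integrability supplied by $g_{\hor}\in\T_n$ via the cartesian representation.
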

\begin{proof}{}
In analogy to  Corollary~\ref{eq:cor-derivatives-are-mgs} the proof follows directly
from the Taylor expansion of formula \eqref{eq:claim-generating-mg-test-fn}
with respect to $\alpha$ and $\tau.$
\end{proof}

\begin{appendix}
\section{Coordinate transformations for superfunctions}

We abbreviate $\boldsymbol{x}=(u,s,\overline\psi,\psi)
=(u_i,s_i,\overline\psi_i,\psi_i)_{i\in V}$ and 
$d\boldsymbol{x}=\prod_{i\in V}du_i\, ds_i\,\partial_{\overline\psi_i}\,
\partial_{\psi_i}$. 

\begin{lemma}
\label{le:supertransformation-formula}
For $v\in\G(\V')_V$ and any compactly supported (or sufficiently
fast decaying) superfunction $f$, one has
\begin{align}
\int d\boldsymbol{x} \; (\sstar_v f)(\boldsymbol{x})
=
\int d\boldsymbol{x} \; f(\boldsymbol{x}).
\end{align}
\end{lemma}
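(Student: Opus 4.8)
The plan is to read $\sstar_v f$ as a genuine pullback $\sstar_v f = f\circ\Phi_v$, where $\Phi_v$ is the explicit coordinate map $(u,s,\overline\psi,\psi)\mapsto(u',s',\overline\psi',\psi')$ recorded in \eqref{eq:def-strich}, so that the assertion is precisely the invariance of the flat reference supermeasure $d\boldsymbol{x}$ under $\Phi_v$ (equivalently, that the Berezinian of $\Phi_v$ equals $1$, since $\Phi_v$ is triangular with unit diagonal). Because $\sstar$ is a group operation, $\sstar_{v\cdot v'}=\sstar_v\sstar_{v'}$, and the integral identity is linear in the test function, it suffices to prove the claim when $v$ ranges over a generating set of $\bG(\V')_V$: if $\int d\boldsymbol{x}\,\sstar_{v_1}g=\int d\boldsymbol{x}\,g$ and $\int d\boldsymbol{x}\,\sstar_{v_2}g=\int d\boldsymbol{x}\,g$ hold for all admissible $g$, then applying the first identity to $g=\sstar_{v_2}f$ and the second to $g=f$ yields the claim for $v_1\cdot v_2$.

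First I would decompose a general $v=[a,b,\overline\chi,\chi]\in\bG(\V')_V$ into elementary factors. Using the group law $[a,b,\overline w,w]\cdot[a',b',\overline w',w']=[aa',b+ab',\overline w+a\overline w',w+aw']$ one checks, componentwise,
\begin{align}
[a,b,\overline\chi,\chi]
=[a,0,0,0]\cdot[1,a^{-1}b,0,0]\cdot[1,0,a^{-1}\overline\chi,0]\cdot[1,0,0,a^{-1}\chi],
\end{align}
so it is enough to treat four types of generators: a pure dilation $[a,0,0,0]$ acting by $u_i\mapsto u_i+\log a_i$, a pure $s$-shift $[1,b,0,0]$ acting by $s_i\mapsto s_i-e^{-u_i}b_i$, and the two Grassmann shifts $\overline\psi_i\mapsto\overline\psi_i-e^{-u_i}\overline\chi_i$ and $\psi_i\mapsto\psi_i-e^{-u_i}\chi_i$. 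In each case only the named variable is altered, all other coordinates are fixed, and the transformation factorizes over $i\in V$.

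For each elementary type the invariance reduces to a one-variable translation statement. For the dilation the bosonic integral $\int du_i$ is shifted by the even constant $\log a_i$; for the $s$-shift $\int ds_i$ is shifted, at fixed $u_i$, by $e^{-u_i}b_i$; both follow from translation invariance of Lebesgue measure. For the two Grassmann shifts one uses that the Berezin integrals $\partial_{\overline\psi_i}$ and $\partial_{\psi_i}$ are invariant under translation of $\overline\psi_i$, resp.\ $\psi_i$, by an odd element not containing that variable, i.e.\ the elementary identity $\partial_{\overline\psi_i}\,g(\overline\psi_i-c)=\partial_{\overline\psi_i}\,g(\overline\psi_i)$ (since $g$ is at most linear in $\overline\psi_i$, the extra term $-c\,\partial_{\overline\psi_i}g$ carries no $\overline\psi_i$ and is annihilated by $\partial_{\overline\psi_i}$).

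The one genuinely nonroutine point, and the step I would treat carefully, is that the shift amounts $\log a_i$, $e^{-u_i}b_i$, $e^{-u_i}\overline\chi_i$, $e^{-u_i}\chi_i$ are not real numbers but elements of the external Grassmann algebra $\dach\V'$ (even, resp.\ odd). For the bosonic shifts I would split $\log a_i=\log\body(a_i)+\log\!\big(1+\soul(a_i)/\body(a_i)\big)$ into its real part and its nilpotent part $c_{\mathrm{nil}}$, expand $g(u_i+c)$ as a Taylor series in $c_{\mathrm{nil}}$ (finite, since $c_{\mathrm{nil}}$ is nilpotent), and observe that $\int du_i\,\partial_{u_i}^{k}(\cdots)=0$ for $k\ge1$ by the decay/compact-support hypothesis, so that only the $k=0$ term survives and reduces to honest translation by the real part; the same expansion handles the $s$-shift. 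For the Grassmann shifts the verification is purely algebraic and needs no decay. Combining the four elementary cases through the group-operation property of $\sstar$ as in the first paragraph then completes the proof, once one notes that each elementary shear maps the admissible class of test functions into itself so that the intermediate integrals remain well defined.
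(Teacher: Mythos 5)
Your proof is correct, but it takes a genuinely different route from the paper. The paper computes the full super Jacobi matrix of the coordinate change $\boldsymbol{x}\mapsto\boldsymbol{x}'$, observes that it is block lower-triangular with both diagonal blocks of determinant $1$, concludes $\sdet\frac{\partial\boldsymbol{x}'}{\partial\boldsymbol{x}}=1$, and then invokes Berezin's general supertransformation formula to finish in one line. You instead factor $v=[a,0,0,0]\cdot[1,a^{-1}b,0,0]\cdot[1,0,a^{-1}\overline\chi,0]\cdot[1,0,0,a^{-1}\chi]$ (a correct decomposition under the group law), reduce to these generators via the group property of $\sstar_\cdot$, and verify invariance for each elementary shear by hand: translation invariance of $du_i$ and $ds_i$ for the bosonic shifts, with the nilpotent part of the (Grassmann-even) shift handled by a finite Taylor expansion whose $k\ge 1$ terms are total derivatives killed by the decay hypothesis, and the purely algebraic identity $\partial_{\overline\psi_i}g(\overline\psi_i-c)=\partial_{\overline\psi_i}g(\overline\psi_i)$ for the fermionic shifts. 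What your approach buys is self-containedness: you do not need to import Berezin's change-of-variables theorem, and you make explicit exactly where the compact-support/decay hypothesis enters (the vanishing boundary terms in the nilpotent Taylor expansion) --- a point the paper's citation of the general theorem leaves implicit. The cost is length, plus the two bookkeeping obligations you correctly flag: that each elementary shear preserves the admissible test-function class (so the intermediate integrals exist), and that the shift in each variable never depends on that variable itself, which is what makes each step an honest translation at fixed values of the remaining coordinates.
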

\begin{proof}
Consider a supermatrix 
\begin{align}
M=\left(\begin{array}{ll}
A & \Sigma\\
\Gamma & B \end{array}\right)
\end{align}
where $A,B$ have even entries, $\Sigma,\Gamma$ have odd entries, and 
$A$ and $B$ are invertible.  Its superdeterminant is defined by 
\begin{align}
\label{eq:def-superdet}
\sdet M = \frac{\det(A-\Sigma B^{-1}\Gamma)}{\det B}.
\end{align}
It plays an 
analogous role in Berezin's supertransformation formula as the ordinary
determinant plays in the classical transformation formula; cf.\ 
Theorem 2.1 in \cite{berezin}. 

For $v=[a,b,\overline\chi,\chi]$, the change of coordinates generating
$\sstar_v$ is given by 
\begin{align}
\boldsymbol{x}'(\boldsymbol{x})=
(u',s',\overline\psi',\psi')
=(u+\log a,s-e^{-u}ba^{-1},\overline\psi-e^{-u}\overline\chi a^{-1},
\psi-e^{-u}\chi a^{-1}).
\end{align} 
This transformation has the super Jacobi matrix given by 
\begin{align}
\frac{\partial\boldsymbol{x}'}{\partial\boldsymbol{x}}=
\left(\begin{array}{llll}
\frac{\partial u'}{\partial u} & \frac{\partial u'}{\partial s} & 
\frac{\partial u'}{\partial\overline\psi } & 
\frac{\partial u'}{\partial\psi } \\
\frac{\partial s'}{\partial u} & \frac{\partial s'}{\partial s} & 
\frac{\partial s'}{\partial\overline\psi } & 
\frac{\partial s'}{\partial\psi } \\
\frac{\partial\overline\psi'}{\partial u} & 
\frac{\partial\overline\psi'}{\partial s} & 
\frac{\partial\overline\psi'}{\partial\overline\psi } & 
\frac{\partial\overline\psi'}{\partial\psi } \\
\frac{\partial\psi'}{\partial u} & 
\frac{\partial\psi'}{\partial s} & 
\frac{\partial\psi'}{\partial\overline\psi } & 
\frac{\partial\psi'}{\partial\psi } 
\end{array}\right)=
\left(\begin{array}{ll}
A & \0 \\
\Gamma & \boldsymbol{1} \end{array}\right)
\end{align}
with 
\begin{align}
A=\begin{pmatrix} 1 & 0 \\
e^{-u}ba^{-1} & 1
\end{pmatrix}, \quad
\boldsymbol{1}=\begin{pmatrix} 1 & 0 \\
0 & 1
\end{pmatrix}, \quad
\Gamma=\begin{pmatrix} e^{-u}\overline\chi a^{-1} & 0 \\
e^{-u}\chi a^{-1} & 0
\end{pmatrix}. 
\end{align}
Here $e^{-u}ba^{-1}$ is the diagonal matrix with the entries 
$e^{-u_i}b_ia^{-1}_i$. This super Jacobi matrix has the superdeterminant
$\sdet \frac{\partial\boldsymbol{x}'}{\partial\boldsymbol{x}}=1$. 
Consequently, the inverse supertransformation has the superdeterminant
$\sdet \frac{\partial\boldsymbol{x}}{\partial\boldsymbol{x}'}=1$, 
as well. We obtain 
\begin{align}
&\int d\boldsymbol{x} \; (\sstar_v f)(\boldsymbol{x})
= \int d\boldsymbol{x} \; f(\boldsymbol{x}'(\boldsymbol{x}))
=
\int d\boldsymbol{x}' \; f(\boldsymbol{x}')\;
\sdet\frac{\partial\boldsymbol{x}}{\partial\boldsymbol{x}'}
=
\int d\boldsymbol{x}' \; f(\boldsymbol{x}').
\end{align}
\end{proof}
\end{appendix}

\paragraph{Acknowledgements.} F.M.\ and S.R.\ would like to thank Pierre 
Tarr\`es for pointing out the usefulness of Lemma 
\ref{le:distribution-theta-given-u}.

\end{document}